\newtheorem{theorem}{Theorem}[section]
\newtheorem{lemma}[theorem]{Lemma}
\newtheorem{definition}[theorem]{Definition}
\newtheorem{corollary}[theorem]{Corollary}
\newtheorem{proposition}[theorem]{Proposition}
\newtheorem{lem-def}[theorem]{Lemma-Definition}
\DeclareRobustCommand\longtwoheadrightarrow
\newcommand{\hooklongrightarrow}{\lhook\joinrel\longrightarrow}
\newcommand{\M}{\mathbb M}
\newcommand{\N}{\mathbb N}
\newcommand{\Z}{\mathbb Z}
\newcommand{\Q}{\mathbb Q}
\newcommand{\T}{\mathbb T}
\newcommand{\V}{\mathbb{V}}
\newcommand{\Vkp}{\mathbb{V}^{\operatorname{kp}}}
\def\P{\mathbb P}
\def\Pless{\mathbb P^{\op{dless}}}
\def\op{\operatorname}
\def\al{\alpha}
\def\as#1{\renewcommand\arraystretch{#1}}
\def\be{\beta}
\def\cc{{\mathcal C}}
\def\chr{\op{char}}
\def\e{\medskip}
\def\ep{\epsilon}
\def\erel{e_{\op{rel}}}
\def\g{\Gamma}
\def\ga{\gamma}
\def\gal{\op{Gal}}
\def\gen#1{\big\langle\, {#1} \,\big\rangle}
\def\gg{\mathcal{G}}
\def\ggm{\mathcal{G}_\mu}
\def\ggmp{\mathcal{G}_{\mu'}}
\def\gm{\g_\mu}
\def\gp{\mathfrak{p}}
\def\gr{\operatorname{gr}}
\def\hm{H_\mu}
\def\hmp{H_{\mu'}}
\def\imp{\,\Longrightarrow\,}
\def\im{\op{Im}}
\def\iso{\ \lower.3ex\hbox{\as{.08}$\begin{array}{c}\lra\\\mbox{\tiny $\sim\,$}\end{array}$}\ }
\def\k{\op{Ker}}
\def\kb{\overline{K}}
\def\km{k_\mu}
\def\kpm{\op{KP}(\mu)}
\def\kx{K[x]}
\def\la{\lambda}
\def\lc{\op{lc}}
\def\lg{l\raise.6ex\hbox to.2em{\hss.\hss}l}
\def\ll{\mathcal{L}}
\def\lra{\,\longrightarrow\,}
\def\m{{\mathfrak m}}
\def\minf{\mu_{-\infty}}
\def\mmu{\mid_\mu}
\def\mn{\op{Min}}
\def\mx{\op{Max}}
\def\rc{\op{rc}}
\def\nphm{N_{\mu,\phi}}
\def\npphm{N^{\mbox{\tiny pp}}_{\mu,\phi}}
\def\ok{\delta_0}
\def\om{\omega}
\def\oo{\mathcal{O}}
\def\ord{\op{ord}}
\def\phm{\phi_{\op{min}}}
\def\ppa{\mathcal{P}_{\alpha}}
\def\pset{\mathcal{P}}
\def\qg{\mathbb{Q}\g}
\def\qqg{\mathbb{Q}\times\mathbb{Q}\g}
\def\rep{\operatorname{Rep}}
\def\Res{\operatorname{Res}}
\def\rr{\mathcal{R}}
\def\rrm{\mathcal{R}_\mu}
\def\sep{_{\mbox{\tiny sep}}}
\def\sii{\,\Longleftrightarrow\,}
\def\smu{\sim_\mu}
\def\t{\theta}
\def\tame{^{\mbox{\tiny tame}}}
\def\T{\mathbb{T}}
\def\Tstr{\T^{\op{str}}}
\def\ty{\mathbf{t}}     
\title[Defectless polynomials and inductive valuations]{Defectless polynomials over henselian fields and inductive valuations}
\author[Moraes de Oliveira]{Nath\'alia Moraes de Oliveira}
\address{Departament de Matem\`{a}tiques,
         Universitat Aut\`{o}noma de Barcelona,
         Edifici C, E-08193 Bellaterra, Barcelona, Catalonia, Spain}
\email{noliveira@mat.uab.cat,\quad nart@mat.uab.cat}
\author[Nart]{Enric Nart}
\thanks{Partially supported by grants MTM2016-75980-P from MEC, and 204224/2014-4 from CNPq}
\date{}
\keywords{defectless polynomial, graded algebra of a valuation, henselian field, key polynomial, MacLane chain, Newton polygon, residual polynomial operator, inductive valuation}
\begin{document}

\begin{abstract}
Let $(K,v)$ be a henselian valued field. Let $\Pless\subset\kx$ be the set of monic, irreducible polynomials which are defectless and have degree greater than one. For a certain equivalence relation $\,\approx\,$  on $\,\Pless$, we establish a canonical bijection $\M\to\Pless/\!\!\approx$, where $\M$ is a discrete \emph{MacLane space}, constructed in terms of inductive valuations on $\kx$ extending $v$.
\end{abstract}

\maketitle

\section*{Introduction}
Let $(K,v)$ be a valued field, with value group $\g=v(K^*)$. 
In a pioneering work, S. MacLane studied the
extensions of the valuation $v$ to the polynomial ring $\kx$ in one 
indeterminate, in the case $v$ discrete of rank one \cite{mcla,mclb}. 

Starting with any extension $\mu_0$ on $\kx$, he considered 
\emph{inductive valuations} $\mu$, obtained afer a finite number of augmentation steps:
\begin{equation}\label{depthintro}
\mu_0\ \stackrel{\phi_1,\ga_1}\lra\  \mu_1\ \stackrel{\phi_2,\ga_2}\lra\ \cdots
\ \stackrel{\phi_{r-1},\ga_{r-1}}\lra\ \mu_{r-1} 
\ \stackrel{\phi_{r},\ga_{r}}\lra\ \mu_{r}=\mu,
\end{equation}
involving the choice of certain \emph{key polynomials} $\phi_i \in K[x]$ and elements $\ga_i\in\g\otimes\Q$.

MacLane proved that all extensions of $v$ to $\kx$ can be obtained as a certain limit of inductive valuations. These ideas lead to an efficient algorithm for polynomial factorization over $K_v[x]$, where $K_v$ is the completion of $K$ at $v$. 

This algorithm facilitates an efficient resolution of many arithmetic-geometric tasks in number fields and function fields of algebraic curves \cite{gen}.

M. Vaqui\'e generalized MacLane's theory to arbitrary valued fields \cite{Vaq}. The graded algebra $\gg_{\mu}$  attached to a valuation $\mu$ on $\kx$, and some ideal theory considerations in the degree-zero subring $\Delta_{\mu}\subset\gg_{\mu}$, are crucial for the development of the theory.

Let $(K^h,v^h)$ be a henselization of $(K,v)$.
In analogy with the  discrete, rank-one case, MacLane-Vaqui\'e's theory should lead to the  design of efficient 
polynomial factorization algorithms over $K^h[x]$, which in turn could contribute to 
the computational resolution of arithmetic-geometric tasks in algebraic varieties of higher dimension.

For instance, F.J. Herrera, W. Mahboub, M.A. Olalla and M.Spivakovsky use simi\-lar key polynomials as a tool to attack the local uniformization theorem for quasi-excellent noetherian schemes in positive and mixed characteristic \cite{hos,hmos}.  

In our approach, the underlying basic idea is to approximate a prime (monic, irreducible) polynomial $F\in K^h[x]$ by adequate key polynomials of (not necessarily inductive) valuations $\mu$ on $\kx$ such that $\mu(F)$ is sufficiently large. 

If we restrict our attention to inductive valuations, their key polynomials  can approximate only \emph{defectless} prime polynomials of henselian valued fields, as shown by Vaqui\'e in \cite{Vaq2}.  A prime polynomial $F\in K^h[x]$ is defectless if $\deg(F)=e(F)f(F)$, where $e(F)$, $f(F)$ are the ramification index and residual degree, respectively, of the extension of $K^h$ obtained by adjoining a root of $F$.

In this paper, we study constructive methods to produce approximations to defectless prime polynomials, by key polynomials for inductive valuations. To this end, we generalize to arbitrary henselian fields the results of \cite{ResidualIdeals}, where only the discrete rank-one case was considered.

Let $\Pless\subset K^h[x]$ be the subset of prime defectless polynomials of degree greater than one. We define an \emph{Okutsu equivalence relation} $\,\approx\,$  on $\Pless$ as follows: 
$$
F\approx G \ \sii\ \deg(F)=\deg(G),\quad v^h(\Res(F,G))>\deg(F)^2w(F),
$$
for a certain bound $w(F)\in\g\otimes\Q$ defined in section \ref{secAppr}. 

If $F,G\in\Pless$ are separable and Okutsu equivalent, they determine two extensions of $K^h$ with isomorphic maximal tame subextensions (Theorem \ref{tame}).

Our main result parameterizes the quotient set $\,\Pless/\!\approx\;$ by a discrete \emph{MacLane space} $\M$, constructed in terms of inductive valuations (Theorem \ref{MLspace}). 

The space $\M$ consists of pairs $(\mu,\ll)$, where $\mu$ is an inductive valuation and $\ll$ is a (strong) maximal ideal of $\Delta_\mu$. 
The maximal spectrum of $\Delta_\mu$ is in canonical bijection with the set  of $\mu$-equivalence classes of key polynomials for $\mu$ \cite{KeyPol}. Hence, we may assign to each pair $(\mu,\ll)\in\M$ the $\mu$-equivalence class of key polynomials determined by $\ll$, which coincides with an Okutsu equivalence class in $\Pless$.

All prime defectless polynomials in the class attached to  $(\mu,\ll)$ share all discrete arithmetic invariants deduced from (optimal) MacLane chains of $\mu$, as in (\ref{depthintro}).

These results, combined with the computational techniques of \cite{CompRP}, yield an algorithm for polynomial factorization in $K^h[x]$ of separable defectless polynomials in $\kx$; that is, separable polynomials all whose prime factors in $K^h[x]$ are defectless. 

For each prime factor $F\in K^h[x]$, this algorithm computes an Okutsu equivalent approximation to $F$ in $\kx$, together with all arithmetic invariants of $F$ that can be read in an optimal MacLane chain of the inductive valuation $\mu$ corresponding to $F$ through the above mentioned bijection.

\section{Valuations on polynomial rings and key polynomials}\label{secVals}
\pagestyle{headings}

Throughout this paper, we fix a valued field $(K,v)$. Let
$$\m\subset\oo\subset K, \qquad  k=\oo/\m$$ be the maximal ideal, valuation ring and residue class field of the valuation $v$.

Let $\g=v(K^*)$ be the value group, and denote $\qg=\g\otimes\Q$.

In this section, we review some basic facts on valuations on $K[x]$, mainly extracted from \cite{CompRP}, \cite{KeyPol} and \cite{Vaq}.


\subsection{Graded algebra of a valuation on $\kx$ and key polynomials}\label{subsecGr}
Let $\mu$ be a valuation on $\kx$ extending $v$. It extends in an obvious way to a valuation on $K(x)$.

Let $\g_\mu=\mu\left(K(x)^*\right)$ be the value group, and denote the maximal ideal, valuation ring and residue class field, by
$$\m_\mu\subset \oo_\mu\subset K(x),\qquad k_{\mu}=\oo_\mu/\m_\mu.$$

For any $\alpha\in\g_\mu$, consider the abelian groups:
$$
\ppa=\{g\in \kx\mid \mu(g)\ge \alpha\}\supset
\ppa^+=\{g\in \kx\mid \mu(g)> \alpha\}.
$$    

The \emph{graded algebra of $\mu$ over $\kx$} is the integral domain:
$$
\ggm=\gr_{\mu}(\kx)=\bigoplus\nolimits_{\alpha\in\g_\mu}\ppa/\ppa^+.
$$

There is a natural map $\hm\colon \kx\to \ggm$, given by $\hm(0)=0$ and
$$\hm(g)= g+\pset_{\mu(g)}^+\in\pset_{\mu(g)}/\pset_{\mu(g)}^+, \quad\mbox{ if }g\ne0.
$$
Note that $\hm(g)\ne0$ if $g\ne0$. For all $g,h\in \kx$ we have:
\begin{equation}\label{Hmu}
\as{1.3}
\begin{array}{l}
 \hm(gh)=\hm(g)\hm(h), \\
 \hm(g+h)=\hm(g)+\hm(h), \ \mbox{ if }\mu(g)=\mu(h)=\mu(g+h).
\end{array}
\end{equation}


\begin{definition}\label{mu}Let $g,\,h\in \kx$.

We say that $g,h$ are \emph{$\mu$-equivalent}, and we write $g\smu h$, if $\hm(g)=\hm(h)$. 

We say that $g$ is \emph{$\mu$-divisible} by $h$, and we write $h\mmu g$, if $\hm(h)\mid \hm(g)$ in $\ggm$. 

We say that $g$ is $\mu$-irreducible if $\hm(g)\ggm$ is a non-zero prime ideal. 

We say that $g$ is $\mu$-minimal if $g\nmid_\mu f$ for all non-zero $f\in \kx$ with $\deg(f)<\deg(g)$.
\end{definition}

The property of $\mu$-minimality admits a relevant characterization.

\begin{lemma}\label{minimal0}
Let $\phi\in \kx$ be a non-constant polynomial. Let 
\begin{equation}\label{phiexp}
f=\sum\nolimits_{0\le s}a_s\phi^s, \qquad  a_s\in \kx,\quad \deg(a_s)<\deg(\phi) 
\end{equation}
be the canonical $\phi$-expansion of $f\in \kx$.
Then, $\phi$ is $\mu$-minimal if and only if
$$
\mu(f)=\mn\{\mu(a_s\phi^s)\mid 0\le s\},\quad \forall\,f\in \kx.
$$
\end{lemma}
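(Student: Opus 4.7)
The plan is to prove both implications inside the graded algebra $\ggm$, which is an integral domain, using that the ultrametric inequality automatically gives $\mu(f)\geq\mn_s\mu(a_s\phi^s)$. The real content is the reverse inequality (forward direction) and the sharpness needed to force $\mu$-minimality (backward direction).

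For the backward direction ($\Leftarrow$), I would argue by contradiction. Assume the minimum formula holds, and suppose $\phi\mmu f$ for some nonzero $f\in\kx$ with $\deg(f)<\deg(\phi)$. Then there exists $g\in\kx$ with $\hm(f)=\hm(\phi)\hm(g)$; equivalently, by (\ref{Hmu}), $\mu(f-\phi g)>\mu(f)$. Writing the canonical $\phi$-expansion $g=\sum_t b_t\phi^t$ produces the $\phi$-expansion $f-\phi g=a_0+\sum_{s\geq1}(-b_{s-1})\phi^s$, whose zeroth digit is $a_0=f$. Applying the assumed formula to $f-\phi g$ then bounds $\mu(f-\phi g)\leq\mu(a_0)=\mu(f)$, contradicting $\mu(f-\phi g)>\mu(f)$.

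For the forward direction ($\Rightarrow$), assume $\phi$ is $\mu$-minimal and suppose, toward contradiction, that $\mu(f)>m:=\mn_s\mu(a_s\phi^s)$ for some $f$ (necessarily nonzero, since both sides equal $+\infty$ for $f=0$). Each term $a_s\phi^s$ lies in $\pset_m$, while $f$ lies in $\pset_m^+$. Projecting the identity $f=\sum_s a_s\phi^s$ modulo $\pset_m^+$ yields the homogeneous relation
$$0 \;=\; \sum_{s\in I}\hm(a_s)\hm(\phi)^s \quad\text{in } \ggm,$$
where $I:=\{s:\mu(a_s\phi^s)=m\}$ is nonempty and consists only of indices with $a_s\neq 0$.

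Let $s_0=\min I$. Factoring $\hm(\phi)^{s_0}$ out of the relation above and cancelling it, which is legitimate because $\ggm$ is a domain and $\hm(\phi)\neq 0$, one obtains
$$\hm(a_{s_0}) \;=\; \hm(\phi)\cdot\left(-\sum_{s\in I,\,s>s_0}\hm(a_s)\hm(\phi)^{s-s_0-1}\right),$$
whence $\phi\mmu a_{s_0}$. Since $a_{s_0}\neq 0$ and $\deg(a_{s_0})<\deg(\phi)$, this contradicts the $\mu$-minimality of $\phi$. The main obstacle is this cancellation step: choosing $s_0$ minimal in $I$ ensures that only nonnegative powers of $\hm(\phi)$ survive after dividing, and the integrality of $\ggm$ is precisely what converts the resulting homogeneous identity into the desired divisibility.
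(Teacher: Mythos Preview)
The paper does not actually prove Lemma~\ref{minimal0}; it is stated without proof in the review section~\ref{subsecGr}, which collects background results from \cite{CompRP}, \cite{KeyPol}, and \cite{Vaq}. So there is no ``paper's own proof'' to compare against.

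Your argument is correct. Both directions are sound. In the backward direction, the key observation that the $\phi$-expansion of $f-\phi g$ has zeroth digit equal to $f$ itself (since $\deg(f)<\deg(\phi)$) is exactly what makes the contradiction work; you should perhaps note explicitly that every nonzero homogeneous element of $\ggm$ has the form $\hm(g)$ for some $g\in\kx$, which justifies passing from the divisibility $\hm(\phi)\mid\hm(f)$ to an actual polynomial witness $g$. In the forward direction, the cancellation of $\hm(\phi)^{s_0}$ in the integral domain $\ggm$ is legitimate, and the edge case $|I|=1$ is implicitly handled: the right-hand side becomes zero, forcing $\hm(a_{s_0})=0$, hence $a_{s_0}=0$, contradicting $s_0\in I$.

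This proof is essentially the standard one (see for instance \cite[Prop.~2.3]{KeyPol} or the analogous arguments in Vaqui\'e's work), carried out cleanly in the graded algebra language that the paper has set up.
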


\begin{definition} 
A MacLane-Vaqui\'e \emph{key polynomial} for $\mu$ is a monic polynomial in $\kx$ which is simultaneously  $\mu$-minimal and $\mu$-irreducible. 

The set of key polynomials for $\mu$ will be denoted $\kpm$.

A key polynomial is necessarily irreducible in $\kx$.
\end{definition}

\begin{lemma}\label{mid=sim}Let $\phi\in\kpm$, and let $f\in \kx$ be a monic polynomial such that $\phi\mmu f$ and $\deg(f)=\deg(\phi)$. Then, $\phi\smu f$ and $f$ is a key polynomial for $\mu$ too.\end{lemma}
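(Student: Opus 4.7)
The plan is to reduce to the $\phi$-expansion of $f$. Since $f$ and $\phi$ are both monic of the same degree, the $\phi$-expansion of $f$ degenerates to $f = \phi + a_0$ for some $a_0\in\kx$ with $\deg(a_0)<\deg(\phi)$ (including possibly $a_0=0$). The goal is to show that either $a_0=0$ or $\mu(a_0)>\mu(\phi)$; in either situation, the summand $a_0$ contributes $0$ to the graded component of grade $\mu(\phi)$, so $\hm(f)=\hm(\phi)$, i.e., $\phi\smu f$.

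To rule out the other possibilities, apply Lemma \ref{minimal0}, which uses $\mu$-minimality of $\phi$ to compute $\mu(f)=\min\{\mu(\phi),\mu(a_0)\}$. If $\mu(a_0)<\mu(\phi)$, then $\mu(f)=\mu(a_0)$ and $\phi\in\ppa^+$ for $\alpha=\mu(f)$, so $\hm(f)=\hm(a_0)$; the hypothesis $\phi\mmu f$ then gives $\phi\mmu a_0$, contradicting $\mu$-minimality of $\phi$ since $a_0\neq 0$ and $\deg(a_0)<\deg(\phi)$. If $\mu(a_0)=\mu(\phi)$, then $\mu(f)=\mu(\phi)$ as well, so all three classes $\hm(\phi),\hm(a_0),\hm(f)$ lie in the same graded piece and (\ref{Hmu}) gives $\hm(f)=\hm(\phi)+\hm(a_0)$; combining with $\hm(\phi)\mid \hm(f)$ yields $\hm(\phi)\mid\hm(a_0)$, again contradicting $\mu$-minimality of $\phi$. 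This forces $a_0=0$ or $\mu(a_0)>\mu(\phi)$, and hence $\phi\smu f$.

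For the second assertion, observe that once $\hm(f)=\hm(\phi)$, both $\mu$-irreducibility and $\mu$-minimality transfer automatically from $\phi$ to $f$: $\mu$-irreducibility because $\hm(f)\ggm=\hm(\phi)\ggm$ is the same nonzero prime ideal, and $\mu$-minimality because the divisibility predicate $h\mmu g$ depends on $g,h$ only through $\hm(g),\hm(h)$. Since $f$ is monic by hypothesis, $f\in\kpm$. The only real obstacle in the argument is handling the equal-valuation case $\mu(a_0)=\mu(\phi)$ cleanly, which requires first pinning down $\mu(f)$ via Lemma \ref{minimal0} in order to know that no grade-shift occurs when passing to the graded algebra; the rest is formal bookkeeping.
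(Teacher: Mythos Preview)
Your proof is correct. The paper states this lemma without proof, as part of a review of background material extracted from the references \cite{CompRP}, \cite{KeyPol}, and \cite{Vaq}; so there is no in-paper argument to compare against. Your approach via the $\phi$-expansion $f=\phi+a_0$ and the case split on $\mu(a_0)$ versus $\mu(\phi)$ is the natural one, and each case is handled cleanly: Lemma~\ref{minimal0} pins down $\mu(f)$, the two ``bad'' cases both force $\phi\mmu a_0$ with $\deg(a_0)<\deg(\phi)$, contradicting $\mu$-minimality, and the transfer of $\mu$-irreducibility and $\mu$-minimality from $\phi$ to $f$ is immediate once $\hm(f)=\hm(\phi)$.
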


The \emph{weight} of $\mu$ is the following upper bound $w(\mu)\in\qg$ for weighted $\mu$-values.

\begin{theorem}\label{bound}
Let $\phi\in\kpm$. For any monic non-constant $f\in \kx$ we have
$$
\mu(f)/\deg(f)\le w(\mu):=\mu(\phi)/\deg(\phi),
$$
and equality holds if and only if $f$ is $\mu$-minimal.
\end{theorem}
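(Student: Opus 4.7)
The plan is to establish the inequality first, then to prove the two directions of the equivalence separately, exploiting the symmetric roles of $\phi$ and $f$. Set $d = \deg(\phi)$ and $n = \deg(f)$. For the inequality, I consider the $\phi$-expansion of $f^d$: since both $f^d$ and $\phi^n$ are monic of degree $nd$, this expansion has the form
\[
f^d = \phi^n + \sum_{s < n} a_s \phi^s, \qquad \deg(a_s) < d,
\]
with top coefficient equal to $1$. Because $\phi \in \kpm$ is $\mu$-minimal, Lemma \ref{minimal0} yields $d \mu(f) = \mu(f^d) = \min_s \mu(a_s \phi^s) \le \mu(\phi^n) = n \mu(\phi)$, whence $\mu(f)/n \le w(\mu)$.

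For the implication ``$f$ is $\mu$-minimal $\Rightarrow$ equality'', I would exchange the roles of $\phi$ and $f$: the $f$-expansion of $\phi^n$ takes the form
\[
\phi^n = f^d + \sum_{s < d} b_s f^s, \qquad \deg(b_s) < n,
\]
again with top coefficient $1$ by the same degree comparison, because $\phi^n$ and $f^d$ are both monic of degree $nd$. Applying Lemma \ref{minimal0} now to $f$ (assumed $\mu$-minimal) produces $n \mu(\phi) = \mu(\phi^n) \le \mu(f^d) = d \mu(f)$. Combined with the previous bound, this forces $\mu(f)/n = w(\mu)$.

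For the converse ``equality $\Rightarrow$ $f$ is $\mu$-minimal'', I would argue by contradiction. Suppose $d \mu(f) = n \mu(\phi)$ but $f \mmu g$ for some nonzero $g \in \kx$ with $\deg(g) < n$; after scaling we may assume $g$ is monic. Then $g \smu fh$ for some $h \in \kx$, whence $\mu(g) = \mu(f) + \mu(h) = n w(\mu) + \mu(h)$. The main obstacle is to rule out this configuration: the plan is to expand both $g$ and $fh$ in powers of $\phi$, identify the leading $\phi$-terms that must coincide in $\ggm$ in homogeneous degree $\mu(g)$, and invoke that $\phi$ is $\mu$-irreducible and $\mu$-minimal (so $\phi \nmid_\mu r$ for every nonzero $r$ of degree less than $d$) to derive an inconsistency. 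This final comparison in the graded algebra, together with a careful tracking of degrees in the $\phi$-expansions, is the technical heart of the converse.
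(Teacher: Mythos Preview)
The paper does not prove Theorem~\ref{bound}; it is stated without proof in the review Section~\ref{secVals}, as a result imported from \cite{KeyPol}. So there is nothing in the paper itself to compare against directly.

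Your proofs of the inequality and of the implication ``$f$ $\mu$-minimal $\Rightarrow$ equality'' are correct. The device of passing to $f^d$ and $\phi^n$, so that the top coefficient of the relevant expansion is forced to equal $1$, is clean and avoids any case distinction on whether $\deg(\phi)\mid\deg(f)$.

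The converse, however, is only a plan, and the plan does not yet contain the decisive idea. From $g\smu fh$ you obtain $\mu(g)=n\,w(\mu)+\mu(h)$, but since $\mu(h)$ may be negative this yields no contradiction by itself; and ``comparing leading $\phi$-terms'' is too vague, because the minimum in a $\phi$-expansion need not be attained at the top term. One standard way to close the gap: replace $\phi$ by a key polynomial of \emph{minimal} degree $m$ in $\kpm$ (legitimate, since your first two parts already show that $w(\mu)$ is independent of the choice of key polynomial). The right endpoint $s'(\,\cdot\,):=s'_{\mu(\phi)}(\,\cdot\,)$ of the $\mu(\phi)$-component of $N_{\mu,\phi}(\,\cdot\,)$ (Definition~\ref{sla}) is additive under products, by Lemma~\ref{length} together with Corollary~\ref{prodR}, and is bounded above by $\lfloor\deg(\,\cdot\,)/m\rfloor$. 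An induction on $n=\deg(f)$ then shows that $\mu(f)=n\,w(\mu)$ forces $m\mid n$: otherwise the top coefficient $a_\ell$ in the $\phi$-expansion of $f$ is monic of degree $0<r<m$ with $\mu(a_\ell)=r\,w(\mu)$, hence $\mu$-minimal by the inductive hypothesis---contradicting the fact that $\hm(a_\ell)$ is a unit in $\ggm$ (see \cite[Prop.~3.5]{KeyPol}, recalled just before Theorem~\ref{kstructure}). Once $m\mid n$, one has $s'(f)=n/m$, while any nonzero $g$ with $\deg(g)<n$ satisfies $s'(g)\le\lfloor\deg(g)/m\rfloor<n/m$; thus $f\mmu g$ is impossible, since it would give $s'(g)\ge s'(f)$.
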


Let $\Delta=\Delta_\mu=\pset_0/\pset_0^+\subset\ggm$ be the subring of homogeneous elements of degree zero. 
There are canonical injective ring homomorphisms: 
$$ k\hooklongrightarrow\Delta\hooklongrightarrow k_{\mu}.$$
The algebraic closure of $k$ in $\Delta$ is a subfield $\kappa=\kappa(\mu)\subset\Delta$ such that $\kappa^*=\Delta^*$. 

Let $I(\Delta)$ be the set of ideals in $\Delta$, and consider the \emph{residual ideal operator}:
$$
\rr=\rrm\colon \kx\lra I(\Delta),\qquad g\longmapsto \left(\hm(g)\ggm\right)\cap \Delta.
$$

This operator $\rr$ translates questions about the action of  $\mu$ on $\kx$ into ideal-theoretic consi\-derations in the $k$-algebra $\Delta$.

\begin{theorem}\label{Max}
If $\kpm\ne\emptyset$, the residual ideal operator induces a bijection: 
$$\rr\colon \kpm/\!\!\smu\,\lra\,\mx(\Delta).$$ 
\end{theorem}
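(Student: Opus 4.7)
The proof proceeds in four steps: descent of $\rr$ to $\mu$-equivalence classes, the containment $\rr(\kpm)\subset\mx(\Delta)$, injectivity, and surjectivity. Descent is immediate, since $\phi\smu\phi'$ gives $\hm(\phi)=\hm(\phi')$, hence equal principal ideals in $\ggm$ and equal contractions to $\Delta$. That $\rr(\phi)$ is a proper prime of $\Delta$ follows because $\mu$-irreducibility makes $\hm(\phi)\ggm$ a nonzero homogeneous prime of $\ggm$ (its contraction to a subring remains prime), while properness comes from $\hm(\phi)$ not being a unit.

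The analytic core is the maximality of $\rr(\phi)$. Given a nonzero class $\bar a\in\Delta/\rr(\phi)$, I would lift to $a\in\pset_0$ and use the $\phi$-adic expansion $a=\sum_s a_s\phi^s$ with $\deg a_s<\deg\phi$; Lemma \ref{minimal0} gives $\mu(a)=\min_s\mu(a_s\phi^s)=0$, and the identities (\ref{Hmu}) show that modulo $\hm(\phi)\ggm$ only the $s=0$ term survives, forcing $\mu(a_0)=0$, $\deg a_0<\deg\phi$, and $\bar a\equiv\hm(a_0)\pmod{\rr(\phi)}$. This reduces maximality to inverting $\hm(a_0)$ in $\Delta/\rr(\phi)$ for $a_0\in\kx$ with $\deg a_0<\deg\phi$ and $\mu(a_0)=0$; one then exploits the fact that $\phi$ is irreducible in $\kx$ (so that $\kx/(\phi)$ is a field, producing $c\in\kx$ with $\deg c<\deg\phi$ and $a_0c\equiv 1\pmod\phi$) together with the structural description of $\ggm/\hm(\phi)\ggm$ in degree zero to produce the required inverse of $\bar a$.

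For injectivity, the preceding reduction shows that $\deg\phi$ is recoverable from $\rr(\phi)$ as the minimal $\kx$-degree of a nonzero coset representative of $\Delta/\rr(\phi)$; hence $\rr(\phi)=\rr(\phi')$ forces $\deg\phi=\deg\phi'$, from which the equality of contractions yields $\phi\mmu\phi'$, and Lemma \ref{mid=sim} concludes $\phi\smu\phi'$. For surjectivity, given $\ll\in\mx(\Delta)$ and an initial key polynomial $\phi_0\in\kpm$ (furnished by the hypothesis), a $\phi_0$-adic description of $\Delta$ identifies $\ll$ with a maximal ideal of an overlying residue polynomial ring over $\kappa(\mu)$, and lifting a generator back to $\kx$ yields a candidate key polynomial $\phi$; the $\mu$-minimality and $\mu$-irreducibility of $\phi$ are then verified via the same $\phi$-adic machinery to conclude $\rr(\phi)=\ll$.

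The principal obstacle throughout is the $\mu$-value bookkeeping on lifts: producing a degree-zero inverse in the maximality step, and extracting an actual key polynomial from the ideal-theoretic data in the surjectivity step. Both reductions lean on Lemma \ref{minimal0} and on the irreducibility of key polynomials in $\kx$, so the whole proof is best organized around the $\phi$-adic expansion and the induced structure on $\ggm$ and $\Delta$.
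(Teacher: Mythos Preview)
The paper does not prove Theorem~\ref{Max}; it is quoted from \cite{KeyPol} as part of the review in Section~\ref{secVals}. The surrounding statements (Proposition~\ref{maxideal} for maximality, Corollary~\ref{samefiber} for injectivity, and Theorem~\ref{charKP} together with (\ref{RR}) and Theorem~\ref{kstructure} for surjectivity, at least when $\mu\in\Vkp$) are the ingredients of the argument in that reference, and your outline for maximality and surjectivity is broadly compatible with them. Your maximality step is essentially Proposition~\ref{maxideal}: the map $\Delta\to k_{\mu,\phi}$ is onto with kernel $\rr(\phi)$. One point you gloss over is why the Bezout inverse $c$ of $a_0$ modulo $\phi$ satisfies $\mu(c)=0$; this is exactly where one uses that $v_{\mu,\phi}$ is a valuation (Proposition~\ref{vphi}), so that $\mu(a_0)+\mu(c)=v_{\mu,\phi}(1)=0$.

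Your injectivity argument, however, has a genuine gap. The claim that ``$\deg\phi$ is recoverable from $\rr(\phi)$ as the minimal $\kx$-degree of a nonzero coset representative of $\Delta/\rr(\phi)$'' is not correct as stated: nonzero cosets are represented by units in $k^*\subset\Delta$, which have $\kx$-degree~$0$. If instead you meant the minimal degree of a lift of a nonzero element of $\rr(\phi)$, note that $\hm(\phi)$ itself lies in $\rr(\phi)$ only when $\mu(\phi)=0$, which is not assumed. More seriously, the step ``equality of contractions yields $\phi\mmu\phi'$'' is exactly the nontrivial point: one must show that the contraction map from nonzero homogeneous primes of $\ggm$ to $\mx(\Delta)$ is injective, and this requires the structural description of $\ggm$ (in the commensurable case, $\Delta=\kappa[\xi]$ and every homogeneous prime is generated by some $\hm(\phi)$ with $\rr(\phi)=\psi(\xi)\Delta$ for an irreducible $\psi$, cf.\ Theorem~\ref{kstructure} and (\ref{RR})). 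Without that input your passage from $\rr(\phi)=\rr(\phi')$ to $\phi\mmu\phi'$ is unjustified. The clean route is the one encoded in Corollary~\ref{samefiber}: use the residual polynomial operator to translate both $\rr$ and $\smu$ into equalities in $\kappa[y]$.
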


\subsection*{Valuation attached to a key polynomial}
For any positive integer $m$ we denote:
$$
\g_{\mu,m}=\left\{\mu(a)\in\gm\mid a\in K[x],\ 0\le \deg(a)<m\right\}\subset\gm.
$$

\begin{proposition}\label{vphi}
Take $\phi\in\kpm$ and let $m=\deg(\phi)$. Consider the maximal ideal $\gp=\phi\kx$, the field $K_\phi=\kx/\gp$,
and the onto mapping:
$$
v_{\mu,\phi}\colon K_\phi^*\longtwoheadrightarrow \g_{\mu,m},\qquad v_{\mu,\phi}(f+\gp)=\mu(a_0),\quad \forall f\in\kx\setminus\gp,
$$
where $a_0\in\kx$ is the remainder of the division of $f$ by $\phi$. Then,
$v_{\mu,\phi}$ is a valuation on $K_\phi$ extending $v$, with value group $\g_{\mu,m}$. Moreover,
$\gm=\gen{\g_{\mu,m},\mu(\phi)}$.
\end{proposition}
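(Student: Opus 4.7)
My plan is to verify the claim in four steps, the main work being the multiplicativity of $v_{\mu,\phi}$.

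The first step is well-definedness and extension of $v$: Euclidean division by $\phi$ in $\kx$ assigns to each nonzero class $f+\gp \in K_\phi$ a unique representative $a_0$ of degree less than $m$, so the rule $v_{\mu,\phi}(f+\gp) := \mu(a_0)$ is well-posed; for $c \in K^*$, the remainder is $c$ itself, so $v_{\mu,\phi}(c+\gp) = \mu(c) = v(c)$. The ultrametric inequality is equally straightforward: if $a_0, b_0$ are the remainders of $f, g$, then both have degree less than $m$, so $a_0+b_0$ is the remainder of $f+g$, and $\mu(a_0+b_0) \ge \min\{\mu(a_0), \mu(b_0)\}$ by the valuation property of $\mu$.

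The substantive step is proving $v_{\mu,\phi}(fg+\gp) = v_{\mu,\phi}(f+\gp) + v_{\mu,\phi}(g+\gp)$. With $a_0, b_0$ as before, $\deg(a_0 b_0) < 2m$, so the Euclidean division $a_0 b_0 = c_0 + q\phi$ automatically has $\deg(c_0), \deg(q) < m$, and is therefore already the canonical $\phi$-expansion of $a_0 b_0$. Lemma \ref{minimal0} yields $\mu(a_0 b_0) = \min\{\mu(c_0), \mu(q\phi)\}$, and it suffices to show $\mu(c_0) \le \mu(q\phi)$. If instead $\mu(c_0) > \mu(q\phi)$, then in $\ggm$ one computes $\hm(a_0)\hm(b_0) = \hm(a_0 b_0) = \hm(q)\hm(\phi)$, which is divisible by $\hm(\phi)$. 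But $\hm(\phi)\ggm$ is prime by $\mu$-irreducibility, forcing $\hm(\phi) \mid \hm(a_0)$ or $\hm(\phi) \mid \hm(b_0)$, contrary to $\mu$-minimality (since $\deg(a_0), \deg(b_0) < m$). Hence $\mu(a_0 b_0) = \mu(c_0)$, giving multiplicativity. Nondegeneracy (that $c_0 \ne 0$ when $a_0 b_0 \ne 0$) follows from the irreducibility of $\phi$ in $\kx$ together with the degree bounds $\deg(a_0), \deg(b_0) < m$.

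The value-group claim $v_{\mu,\phi}(K_\phi^*) = \g_{\mu,m}$ and the surjectivity in the statement are immediate from the definition of $\g_{\mu,m}$. Finally, for $\gm = \gen{\g_{\mu,m}, \mu(\phi)}$, I would apply Lemma \ref{minimal0} to the canonical $\phi$-expansion of an arbitrary nonzero $f\in\kx$: $\mu(f)$ equals $\mu(a_{s_0}) + s_0\mu(\phi)$ for some $s_0$, which lies in $\gen{\g_{\mu,m}, \mu(\phi)}$. Since $\gm = \mu(K(x)^*)$ is generated as a group by the values $\mu(f)$ with $f \in \kx$ nonzero, the containment extends to all of $\gm$, and the reverse inclusion is clear. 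Throughout, the main obstacle is the multiplicativity argument: it is the only place where both defining properties of a key polynomial ($\mu$-irreducibility and $\mu$-minimality) must be brought to bear simultaneously.
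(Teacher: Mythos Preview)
Your proof is correct. The paper itself does not supply a proof of this proposition: it appears in Section~\ref{secVals} among results that are explicitly reviewed from \cite{CompRP}, \cite{KeyPol} and \cite{Vaq}, and is stated without argument. Your write-up therefore fills in what the paper omits, and it does so along the expected lines: well-definedness and the ultrametric inequality are immediate, the value-group statements follow directly from Lemma~\ref{minimal0} and the definition of $\g_{\mu,m}$, and the only nontrivial point---multiplicativity---is handled exactly by the graded-algebra argument one would anticipate, using $\mu$-irreducibility of $\phi$ to force $\hm(\phi)\mid\hm(a_0)$ or $\hm(\phi)\mid\hm(b_0)$ and then $\mu$-minimality to derive a contradiction. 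The nondegeneracy observation (that $c_0\ne0$ because $\phi$ is irreducible of degree $m$ while $\deg(a_0b_0)<2m$) is also correctly noted.
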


Denote the maximal ideal, the valuation ring and the residue class field of $v_{\mu,\phi}$ by: 
$$\m_{\mu,\phi}\subset\oo_{\mu,\phi}\subset K_\phi,\qquad k_{\mu,\phi}=\oo_{\mu,\phi}/\m_{\mu,\phi}.$$ 

\begin{proposition}\label{maxideal}
If $\phi\in\kpm$, then 
$\rr(\phi)$ is the kernel of the onto homomorphism $$\Delta\longtwoheadrightarrow k_{\mu,\phi},\qquad g+\pset^+_0\ \longmapsto\ \left(g+\gp\right)+\m_{\mu,\phi}.$$
If $\phi$ has minimal degree in $\kpm$, the mapping $\kappa\hookrightarrow\Delta\twoheadrightarrow k_{\mu,\phi}$ is an isomorphism. 
\end{proposition}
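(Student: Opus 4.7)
The plan is to analyze the prescribed map $\Psi\colon \Delta \to k_{\mu,\phi}$, $g + \pset_0^+ \mapsto (g+\gp) + \m_{\mu,\phi}$, showing first that it is a well-defined surjective ring homomorphism with kernel $\rr(\phi)$. For well-definedness, I would take $g \in \pset_0$ with $\phi$-expansion $g = q\phi + a_0$: by Lemma~\ref{minimal0} we have $\mu(a_0) \geq \mu(g) \geq 0$, so $v_{\mu,\phi}(g + \gp) = \mu(a_0) \geq 0$ and the image lies in $\oo_{\mu,\phi}$; applied to $g \in \pset_0^+$ the same inequality shows the map descends to $\Delta$. Multiplicativity follows similarly, and surjectivity is immediate: any class in $k_{\mu,\phi}$ is represented by some $h$ with $\deg(h) < \deg(\phi)$ and $\mu(h) \geq 0$, and $\hm(h)$ is a preimage.

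To compute $\ker \Psi$, I would exploit the formula $\mu(g) = \min\{\mu(a_0), \mu(q\phi)\}$ provided by Lemma~\ref{minimal0}. If $\Psi(\hm(g)) = 0$ with $\mu(g) = 0$, then $\mu(a_0) > 0$, which forces $\mu(q\phi) = 0 < \mu(a_0)$; directly from the definition of $\hm$ this yields $\hm(g) = \hm(q\phi) = \hm(q)\hm(\phi) \in \hm(\phi)\ggm$, so $\hm(g) \in \rr(\phi)$. Conversely, if $\hm(g) \in \rr(\phi)$ and $\hm(g) \neq 0$, then $\hm(g) = \hm(\phi)\hm(h)$ for a suitable homogeneous $\hm(h)$; by (\ref{Hmu}) this becomes $\hm(g) = \hm(\phi h)$, i.e.\ $g \smu \phi h$, so $\mu(g - \phi h) > \mu(g) = 0$. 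Since $g - \phi h$ shares the $\phi$-remainder $a_0$ with $g$, Lemma~\ref{minimal0} then forces $\mu(a_0) \geq \mu(g - \phi h) > 0$, so $\hm(g) \in \ker \Psi$.

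For the second assertion, assume $\phi$ has minimal degree $m$ in $\kpm$ and consider the composition $\iota\colon \kappa \hookrightarrow \Delta \twoheadrightarrow k_{\mu,\phi}$. Injectivity is automatic since $\kappa$ is a field and $\iota(1) = 1 \neq 0$. For surjectivity, I would lift any $\bar h \in k_{\mu,\phi}^*$ to $h \in \kx$ with $\mu(h) = 0$ and $\deg(h) < m$; the claim is that $\hm(h) \in \Delta^* = \kappa^*$, which places $\bar h$ in the image of $\iota$. To prove the claim it suffices to check that $\hm(h)$ avoids every maximal ideal of $\Delta$; by Theorem~\ref{Max} every such maximal ideal has the form $\rr(\phi')$ for some $\phi' \in \kpm$, and the equivalence $\hm(h) \in \rr(\phi') \iff \phi' \mmu h$ just established, combined with the $\mu$-minimality of $\phi'$, forces $\deg(h) \geq \deg(\phi') \geq m$, contradicting $\deg(h) < m$.

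The main obstacle is the kernel computation: it requires tying together three distinct notions ($\mu$-equivalence in $\kx$, divisibility in $\ggm$, and the residue at $v_{\mu,\phi}$) through the uniqueness of the $\phi$-expansion, which is available precisely because $\phi$ is $\mu$-minimal. Once this translation is in place, the minimal-degree assertion reduces cleanly to the observation that a polynomial of degree less than $m$ cannot be $\mu$-divisible by any key polynomial, so its image in $\Delta$ must be a unit and thus belong to $\kappa$.
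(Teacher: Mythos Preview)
The paper does not prove Proposition~\ref{maxideal}: it appears in the review section~\ref{secVals} among background results imported from \cite{CompRP}, \cite{KeyPol}, and \cite{Vaq}, and no argument is given there. Your proof is correct and fits entirely within the paper's framework.

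One remark on the second assertion: the paper, just before Theorem~\ref{kstructure}, quotes \cite[Prop.~3.5]{KeyPol} directly for the fact that $\hm(a)\in\ggm^*$ whenever $\deg(a)$ is less than the minimal degree $m$ of a key polynomial. Your route instead deduces this from Theorem~\ref{Max}, by showing that such an $\hm(a)$ avoids every maximal ideal $\rr(\phi')$ of $\Delta$ (since $\phi'\mmu a$ would contradict $\mu$-minimality of $\phi'$). Both approaches ultimately rest on results from \cite{KeyPol}, but yours has the virtue of staying inside the statements actually recorded in the present paper.
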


Thus, if $\kpm\ne\emptyset$, we have $[\kappa\colon k]<\infty$. 
We define the \emph{residual degree} of $\mu$ as
$$
f(\mu):=[\kappa\colon k].
$$

\subsection{Commensurable extensions}\label{subsecComm}

The extension $\mu/v$ is \emph{commensurable} if $\g_\mu/\g$ is a torsion group. In this case, there is a canonical embedding $$\g_\mu\hooklongrightarrow \qg.$$

By composing $\mu$ with this embedding, we obtain a $\qg$-valued valuation on $\kx$. 
Conversely, any $\qg$-valued extension of $v$ to $\kx$ is commensurable over $v$.

Two commensurable extensions of $v$ are equivalent if and only if their corresponding $\qg$-valued valuations coincide.
Hence, we may identify the set of equivalence classes of comensurable valuations extending $v$ with the set
$$\V:=\V(K,v)=\lbrace \mu\colon \kx\lra \qg \cup \lbrace\infty\rbrace \mid \mu\mbox{ valuation, } \mu_{\vert K}=v\rbrace.$$

There is a natural partial ordering in the set of all mappings from $\kx$ to any fixed ordered group:
$$\mu\le \mu' \quad\mbox{ if }\quad\mu(f) \le \mu'(f), \quad \forall\,f\in \kx. $$
In particular, $(\V,\le)$ is a partially ordered set.

\begin{lemma}\label{residTr}
Let $\mu$ be a valuation on $\kx$ extending $v$. The following conditions are equivalent.
\begin{enumerate}
\item $\mu/v$ is commensurable and $\kpm\ne\emptyset$.
\item $\mu$ is residually transcendental; that is, $k_\mu/k$ is a transcendental extension.
\item $\mu/v$ is commensurable and $\mu$ is not maximal in $(\V,\le)$.
\end{enumerate}
\end{lemma}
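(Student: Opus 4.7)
The plan is to prove the equivalences via the two pairs (1) $\Leftrightarrow$ (2) and (1) $\Leftrightarrow$ (3). For (1) $\Rightarrow$ (2), given $\phi \in \kpm$, I would exhibit a transcendental element of $k_\mu$ over $k$ already inside $\Delta_\mu$. Using commensurability, pick $e \ge 1$ with $e\mu(\phi) \in \g$ and $a \in K^*$ with $v(a) = e\mu(\phi)$, and set $\xi = \hm(a^{-1}\phi^e) \in \Delta_\mu$. If $\xi$ were algebraic over $k$ with some nonzero annihilating $P \in k[X]$, any lift $\tilde{P}(Y) = \sum \tilde{c}_i Y^i \in \oo[Y]$ (with at least one $\tilde{c}_i \in \oo^*$) would force $\mu(\tilde{P}(a^{-1}\phi^e)) > 0$. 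But Lemma \ref{minimal0} applied to the $\phi$-expansion $\sum \tilde{c}_i a^{-i}\phi^{ei}$ computes this $\mu$-value as $\min_i v(\tilde{c}_i) = 0$, a contradiction.

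For (2) $\Rightarrow$ (1), commensurability follows from the Abhyankar--Zariski inequality
$$\op{rat.rank}(\g_\mu/\g) + \op{tr.deg}_k(k_\mu) \le \op{tr.deg}_K K(x) = 1,$$
since residual transcendence forces the rational rank of $\g_\mu/\g$ to vanish. Existence of a key polynomial for $\mu$ then follows from the MacLane--Vaqui\'e structure theorem for residually transcendental valuations (cf.~\cite{Vaq, KeyPol}), applied to any polynomial whose $\mu$-reduction in $k_\mu$ is transcendental over $k$.

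For (1) $\Rightarrow$ (3), pick $\phi \in \kpm$ and any $\ga \in \qg$ with $\ga > \mu(\phi)$; the MacLane--Vaqui\'e augmentation $\mu' = [\mu; \phi, \ga]$ is a valuation on $\kx$ satisfying $\mu' > \mu$, so $\mu$ is not maximal. For (3) $\Rightarrow$ (1), one shows that any monic $\phi \in \kx$ of minimal degree with $\mu'(\phi) > \mu(\phi)$ belongs to $\kpm$. Minimality of $\deg(\phi)$ gives $\mu(f) = \mu'(f)$ for every $f$ with $\deg(f) < \deg(\phi)$. Via Lemma \ref{minimal0}, this yields $\mu$-minimality of $\phi$: cancellation of the minimal terms in a $\phi$-expansion would produce a $\mu$-divisibility $\phi \mmu a_{s_0}$ with $\deg(a_{s_0}) < \deg(\phi)$, incompatible with $\mu = \mu'$ on lower-degree polynomials. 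For $\mu$-irreducibility, a factorization $\phi \smu gh$ with $\deg(g),\deg(h) < \deg(\phi)$ would give $\mu'(gh) = \mu(gh) = \mu(\phi)$ while $\mu'(\phi - gh) \ge \mu(\phi - gh) > \mu(\phi)$; the triangle inequality applied to $\phi = gh + (\phi - gh)$ then forces $\mu'(\phi) = \mu(\phi)$, contradicting $\mu'(\phi) > \mu(\phi)$.

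The main obstacle is (2) $\Rightarrow$ (1): extracting a genuine key polynomial purely from residual transcendence. The cleanest route is to cite the MacLane--Vaqui\'e structural result; a self-contained proof would parallel the extraction in (3) $\Rightarrow$ (1), but using a carefully chosen monic polynomial whose $\mu$-reduction in $k_\mu$ realizes the chosen transcendental element, and then verifying the $\mu$-minimality and $\mu$-irreducibility axioms. The other implications are essentially constructive and rely only on Lemma \ref{minimal0} together with the compatibility between $\mu$ and $\mu'$ on low-degree polynomials.
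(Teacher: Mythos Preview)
The paper states Lemma~\ref{residTr} without proof, treating it as background (the relevant arguments live in \cite{Vaq} and \cite{KeyPol}). So there is no ``paper's proof'' to compare against; I will simply assess your sketch.

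Your arguments for (1)$\Rightarrow$(2), (2)$\Rightarrow$(1), and (1)$\Rightarrow$(3) are sound. In particular, the transcendence argument via Lemma~\ref{minimal0} is exactly the mechanism behind Theorem~\ref{kstructure}, and invoking the Abhyankar inequality for commensurability is the standard move.

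The gap is in (3)$\Rightarrow$(1), specifically your treatment of $\mu$-irreducibility. You rule out factorizations $\phi\smu gh$ with $\deg(g),\deg(h)<\deg(\phi)$, but that is \emph{not} the negation of $\mu$-irreducibility: the definition requires $\hm(\phi)\ggm$ to be a \emph{prime} ideal, so what you must exclude is $\phi\mmu gh$ with $\phi\nmid_\mu g$ and $\phi\nmid_\mu h$, with no a~priori degree bound on $g,h$. Your argument does not reach this case, because once $\deg(g)\ge\deg(\phi)$ you lose the identity $\mu(g)=\mu'(g)$. The clean fix (and this is how Lemma~\ref{minDegree}, cited later in the paper from \cite{Vaq}, proceeds) is to first establish the equivalence
\[
\phi\nmid_\mu f \ \Longleftrightarrow\ \mu(f)=\mu'(f)
\]
for \emph{all} $f$, using $\mu$-minimality and the $\phi$-expansion. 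Primality then follows in one line: if $\phi\nmid_\mu g$ and $\phi\nmid_\mu h$, then $\mu(gh)=\mu'(gh)$, hence $\phi\nmid_\mu gh$. Your $\mu$-minimality sketch is also too compressed to stand on its own; the phrase ``cancellation of the minimal terms \dots\ would produce $\phi\mmu a_{s_0}$'' does not correspond to an actual step. Since the paper itself packages all of (3)$\Rightarrow$(1) into Lemma~\ref{minDegree}, the simplest repair is to invoke that lemma directly rather than re-derive it.
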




Let us denote the set of equivalence classes of commensurable extensions of $v$ admitting key polynomials by
$$
\Vkp=\Vkp(K,v)\subset \V.
$$

For $\mu\in\Vkp$, let $\phm$ be a key polynomial for $\mu$ of minimal degree $m$ in $\kpm$. By Proposition \ref{vphi}, $\g_{\mu,m}= \g_{v_{\mu,\phm}}$ and $\gm/\g_{\mu,m}$ is a finite cyclic group generated by $\mu(\phm)$.
Thus, we may define the \emph{absolute and relative ramification indices} of $\mu$ as
$$
e(\mu):=\left(\gm\colon\g\right)<\infty,\qquad \erel(\mu):=\left(\gm\colon \g_{\mu,m}\right)<\infty.
$$

Let us describe the structure of the residue class field $k_\mu$.
For all $a\in\kx$ with $\deg(a)<m$ the homogeneous element $\hm(a)$ is a unit in $\ggm$ \cite[Prop. 3.5]{KeyPol}. 

\begin{theorem}\label{kstructure}\mbox{\null}
For $\mu\in\Vkp$, take $\phm\in\kpm$ of minimal degree $m$. Let  $e=\erel(\mu)$ and take $a\in\kx$ with $\deg(a)<m$ such that $\mu(a)=-e\mu(\phm)$. Denote
$$\ep=\hm(a)\in\ggm^*,\qquad \xi=\hm(\phm)^e\ep\in\Delta.$$

Then, $\xi$ is transcendental over $\kappa$, and
$$\Delta=\kappa[\xi],\qquad \op{Frac}(\Delta)=\kappa(\xi)\simeq \km,$$
the last isomorphism being induced by the canonical embedding $\Delta\hookrightarrow \km$.
\end{theorem}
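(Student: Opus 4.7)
My plan is to verify the three claims separately: that $\xi \in \Delta$ is transcendental over $\kappa$, that $\Delta = \kappa[\xi]$, and that the canonical injection $\op{Frac}(\Delta) \hookrightarrow k_\mu$ is surjective. That $\xi$ is homogeneous of degree $0$ in $\ggm$ and non-zero is immediate: its degree is $-e\mu(\phm) + e\mu(\phm) = 0$, and $\ggm$ is a domain, so the product $\hm(a)\hm(\phm)^e$ cannot vanish.

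The core of the argument is to show $\Delta \subseteq \kappa[\xi]$. Given $\eta \in \Delta$, I would choose a lift $g \in \kx$ with $\mu(g) = 0$ and $\hm(g) = \eta$, and write its $\phm$-expansion $g = \sum_s a_s \phm^s$ with $\deg(a_s) < m$. By Lemma \ref{minimal0}, $\mu(g) = \min_s \mu(a_s \phm^s) = 0$, so only the indices $s \in M := \{s : \mu(a_s \phm^s) = 0\}$ contribute to $\hm(g)$. The crucial observation is that $M \subseteq e\Z$: for such $s$ we have $\mu(a_s) = -s\mu(\phm) \in \g_{\mu,m}$ because $\deg(a_s) < m$ (Proposition \ref{vphi}), and the cyclic quotient $\gm/\g_{\mu,m}$ of order $e$ is generated by the class of $\mu(\phm)$, which forces $e \mid s$. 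Writing $s = je$ and unwinding the definition of $\xi$, one computes
$$\hm(a_{je}\phm^{je}) \;=\; \bigl(\hm(a_{je})\,\hm(a)^{-j}\bigr)\,\xi^j \;=:\; c_j\,\xi^j,$$
where $c_j$ is homogeneous of degree $0$ in $\ggm$ and is a unit there (both $\hm(a_{je})$ and $\hm(a)$ are units, being images of polynomials of degree less than $m$). Since $\Delta^* = \kappa^*$, this yields $c_j \in \kappa$, so $\eta = \sum_j c_j \xi^j \in \kappa[\xi]$.

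For transcendence of $\xi$, I would exploit the surjection $\Delta \twoheadrightarrow k_{\mu,\phm}$ of Proposition \ref{maxideal}, whose restriction to $\kappa$ is an isomorphism by the minimality of $\phm$. Since $e \geq 1$, $\xi = \hm(a)\hm(\phm)^e$ lies in $\rr(\phm)$, so any hypothetical relation $\sum_{j=0}^n c_j \xi^j = 0$ with $c_j \in \kappa$ reduces modulo $\rr(\phm)$ to $c_0 = 0$; cancelling $\xi$ (using that $\Delta$ is a domain) and iterating completes this part. Finally, to surject $\op{Frac}(\Delta) \to k_\mu$, I would represent any $f + \m_\mu \in k_\mu$ with $\mu(f) = 0$ as $f = g/h$ with $g,h \in \kx$ of common value $\delta \in \gm$, and produce a polynomial $c \in \kx$ of value $-\delta$ by writing $-\delta = \alpha + k\mu(\phm)$ with $\alpha \in \g_{\mu,m}$ and $0 \leq k < e$, lifting $\alpha$ to $\mu(b)$ with $\deg(b) < m$ (Proposition \ref{vphi}), and setting $c = b\phm^k$. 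Then $f$ is the image of $\hm(gc)/\hm(hc) \in \op{Frac}(\Delta)$. I expect the main obstacle to be the degree-zero computation in the middle paragraph, since it requires simultaneous use of $\phm$-expansions, the unit property of graded images of polynomials of degree less than $m$, and the equality $\Delta^* = \kappa^*$; the other two claims are then reasonably direct.
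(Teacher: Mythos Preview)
The paper does not prove Theorem \ref{kstructure}; it is stated in Section \ref{secVals} as a review result imported from the references \cite{CompRP} and \cite{KeyPol}, so there is no in-paper argument to compare against.

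Your outline is correct and is essentially the standard argument. A few small remarks. For the inclusion $\Delta\subset\kappa[\xi]$, the passage ``only the indices $s\in M$ contribute to $\hm(g)$'' is justified most cleanly by observing that $g-\sum_{s\in M}a_s\phm^s$ lies in $\pset_0^+$, so $\hm(g)$ equals the class of $\sum_{s\in M}a_s\phm^s$ in $\pset_0/\pset_0^+$, which in turn splits additively there. The divisibility $e\mid s$ uses that the image of $\mu(\phm)$ in $\gm/\g_{\mu,m}$ has order exactly $e$; this follows because $\gm=\langle\g_{\mu,m},\mu(\phm)\rangle$ forces that image to generate the cyclic quotient. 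The identification $c_j\in\Delta^*=\kappa^*$ is exactly as you say: a degree-zero unit of $\ggm$ has a degree-zero inverse. Your transcendence argument via Proposition \ref{maxideal} is clean; an alternative is to invoke Lemma \ref{residTr} once $\op{Frac}(\Delta)\simeq k_\mu$ is in hand, since $\kappa(\xi)$ algebraic over $k$ would contradict residual transcendence, but your route avoids any circularity. The surjectivity of $\op{Frac}(\Delta)\to k_\mu$ is fine as written; the only point to make explicit is that the chosen $c=b\,\phm^k$ satisfies $\mu(c)=\mu(b)+k\mu(\phm)$ by multiplicativity of $\mu$ on nonzero elements.
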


\subsection{Newton polygons}\label{subsecNewton}
Let $\mu$ be a valuation in the space $\Vkp$.

The choice of a key polynomial $\phi$ for $\mu$ determines a \emph{Newton polygon operator}
$$
\nphm\colon\, \kx\lra \pset\left({\qqg}\right),
$$
where $\pset\left({\qqg}\right)$ is the set of subsets of the rational space $\qqg$. 

The Newton polygon of the zero polynomial is the empty set. 

If $f\in \kx$ is nonzero and has a canonical $\phi$-expansion as in (\ref{phiexp}),
then $N:=\nphm(f)$ is defined to be the lower convex hull of the finite cloud of points 
\begin{equation}\label{cloud}
\cc=\left\{Q_s\mid s\ge0\right\},\qquad Q_s=\left(s,\mu\left(a_s\right)\right).
\end{equation}

Thus, $N$ is either a single point or a chain of segments, ordered from left to right by increasing slopes, called the \emph{sides} of the polygon.

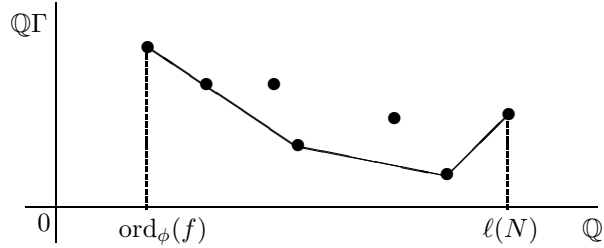
\begin{figure}
\caption{Newton polygon $N=\nphm(f)$ of $f\in \kx$}\label{figNmodel}
\begin{center}
\setlength{\unitlength}{4mm}
\begin{picture}(20,7.5)
\put(14.8,3.4){$\bullet$}\put(11,3.3){$\bullet$}\put(12.75,1.4){$\bullet$}\put(7,4.4){$\bullet$}
\put(7.8,2.4){$\bullet$}\put(4.75,4.4){$\bullet$}\put(2.8,5.65){$\bullet$}
\put(-1,0.6){\line(1,0){19}}\put(0,-0.6){\line(0,1){8}}
\put(8,2.6){\line(-3,2){5}}\put(8,2.63){\line(-3,2){5}}
\put(8,2.6){\line(5,-1){5}}\put(8,2.62){\line(5,-1){5}}
\put(13,1.6){\line(1,1){2}}\put(13,1.62){\line(1,1){2}}
\multiput(3,.5)(0,.25){22}{\vrule height2pt}
\multiput(15,.5)(0,.25){12}{\vrule height2pt}
\put(2.1,-0.4){\begin{footnotesize}$\ord_{\phi}(f)$\end{footnotesize}}
\put(14.2,-0.4){\begin{footnotesize}$\ell(N)$\end{footnotesize}}
\put(17.5,-0.4){\begin{footnotesize}$\Q$\end{footnotesize}}
\put(-1.5,6.4){\begin{footnotesize}$\qg$\end{footnotesize}}
\put(-.6,-.2){\begin{footnotesize}$0$\end{footnotesize}}
\end{picture}
\end{center}
\end{figure}

The abscissa of the left endpoint of $N$ is $\ord_{\phi}(f)$ in $\kx$.  

The abscissa of the right endpoint of $N$ is called the \emph{length} of $N$, and is denoted:
$$
\ell(N)=\left\lfloor \deg(f)/\deg(\phi)\right\rfloor.
$$
The left and right endpoints of $N$, together with the points joining two different sides are called \emph{vertices} of $N$.

\begin{definition}\label{sla}
Let $f\in\kx$, $N=\nphm(f)$ and  $\la\in\qg$.

The \mbox{\emph{$\la$-component}} $S_\la(N)\subset N$ is the intersection of $N$ with the line of slope $-\la$ which first touches $N$ from below. In other  words,
$$S_\la(N)= \{(s,u)\in N\,\mid\, u+s\la\mbox{ is minimal}\,\}.$$

The abscissas of the endpoints of $S_\la(N)$ are denoted \ $s_{\la}(f)\le s'_{\la}(f)$.

We say that $N=\nphm(f)$ is \emph{one-sided} of slope $-\la$ if 
$$
N=S_\la(N),\qquad s_{\la}(f)=0,\qquad s'_{\la}(f)>0. 
$$
\end{definition}

If $N$ has a side $S$ of slope $-\la$, then $S_\la(N)=S$. Otherwise, $S_\la(N)$ is a vertex of $N$.  Figure \ref{figComponent0} illustrates both possibilities.


\begin{figure}
\caption{$\la$-component of $\nphm(f)$. The line $L$ has slope $-\la$.}\label{figComponent0}
\begin{center}
\setlength{\unitlength}{4mm}
\begin{picture}(30,8.8)
\put(2.8,4.8){$\bullet$}\put(7.8,2.3){$\bullet$}
\put(-1,0.6){\line(1,0){13}}\put(0,-0.4){\line(0,1){9}}
\put(-1,7){\line(2,-1){11}}
\put(3,5){\line(-1,2){1.5}}\put(3,5.04){\line(-1,2){1.5}}
\put(3,5){\line(2,-1){5}}\put(3,5.04){\line(2,-1){5}}
\put(8,2.5){\line(4,-1){4}}\put(8,2.54){\line(4,-1){4}}
\multiput(3,.4)(0,.25){18}{\vrule height2pt}
\multiput(8,.4)(0,.25){8}{\vrule height2pt}
\put(7.8,-.4){\begin{footnotesize}$s'_{\la}(f)$\end{footnotesize}}
\put(2.6,-.4){\begin{footnotesize}$s_{\la}(f)$\end{footnotesize}}
\put(-1,7.4){\begin{footnotesize}$L$\end{footnotesize}}
\put(-.6,-.2){\begin{footnotesize}$0$\end{footnotesize}}
\put(3,7.2){\begin{footnotesize}$N=\nphm(f)$\end{footnotesize}}
\put(5.5,4){\begin{footnotesize}$S_\la(N)$\end{footnotesize}}
\put(20.8,5.4){$\bullet$}\put(23.8,3.3){$\bullet$}
\put(17,0.6){\line(1,0){13}}\put(18,-0.4){\line(0,1){9}}
\put(17,7.05){\line(2,-1){11}}
\put(24,3.6){\line(-3,2){3}}\put(24,3.64){\line(-3,2){3}}
\put(21,5.8){\line(-1,2){1}}\put(21,5.84){\line(-1,2){1}}
\put(24,3.55){\line(4,-1){4}}\put(24,3.59){\line(4,-1){4}}
\multiput(24,.5)(0,.25){12}{\vrule height2pt}
\put(21.4,-.4){\begin{footnotesize}$s_{\la}(f)=s'_{\la}(f)$\end{footnotesize}}
\put(17,7.3){\begin{footnotesize}$L$\end{footnotesize}}
\put(17.4,-.2){\begin{footnotesize}$0$\end{footnotesize}}
\put(22,7.1){\begin{footnotesize}$N=\nphm(f)$\end{footnotesize}}
\put(24,4){\begin{footnotesize}$S_\la(N)$\end{footnotesize}}
\end{picture}
\end{center}
\end{figure}

Since $\phi$ is $\mu$-minimal, Lemma \ref{minimal0} shows that $$\mu(f)=\mn\{\mu\left(a_s\phi^s\right)\mid s\ge0\}=\mn\{\mu\left(a_s\right)+s\mu(\phi)\mid s\ge0\}.$$ 
Hence, $\mu(f)$ is the ordinate of the point where the vertical axis meets the line of slope $-\mu(\phi)$ containing the $\mu(\phi)$-component of $\nphm(f)$. (see Figure \ref{figAlpha})


\begin{definition}
The \emph{principal Newton polygon} $\npphm(f)$ is the polygon formed by the sides of $\nphm(f)$ of slope less than $-\mu(\phi)$. 

If $\nphm(f)$ has no sides of slope less than $-\mu(\phi)$, then $\nphm^{\mbox{\tiny pp}}(f)$ is defined to be the left endpoint of $\nphm(f)$. 
\end{definition}

\begin{lemma}\label{length}
The integer $\ell\left(\nphm^{\mbox{\tiny pp}}(f)\right)=s_{\mu(\phi)}(f)$  is the order with which the prime element $\hm(\phi)$ divides $\hm(f)$ in the graded algebra $\ggm$.
\end{lemma}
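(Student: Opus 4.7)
My plan is to unwind the definitions on the left-hand side and exploit $\mu$-minimality of $\phi$ on the right-hand side, meeting in the middle at the canonical $\phi$-expansion of $f$. The first equality, $\ell(\npphm(f)) = s_{\mu(\phi)}(f)$, is immediate from the definitions: the right endpoint of $\npphm(f)$ is by construction the left endpoint of the $\mu(\phi)$-component of $\nphm(f)$, whose abscissa is $s_{\mu(\phi)}(f)$. (In the degenerate case where $\nphm(f)$ has no side of slope less than $-\mu(\phi)$, both quantities equal $\ord_\phi(f)$ by the same observation applied to the leftmost vertex.) The real task is identifying this integer with the order of $\hm(\phi)$ in $\hm(f)$.

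Set $s_0 := s_{\mu(\phi)}(f)$ and $S := \{s\ge0 : \mu(a_s\phi^s)=\mu(f)\}$. By Lemma \ref{minimal0}, $\mu$-minimality of $\phi$ guarantees $\mu(f)=\min_s\{\mu(a_s\phi^s)\}$, so $S\neq\emptyset$; geometrically, $S$ is exactly the set of indices $s$ for which $Q_s$ lies on the $\mu(\phi)$-component of $\nphm(f)$, so $s_0=\min S$. Splitting $f=f_1+f_2$ with $f_1=\sum_{s\in S}a_s\phi^s$ and $\mu(f_2)>\mu(f)$, one checks $\mu(f_1)=\mu(f)$ (otherwise $f\in\pset_{\mu(f)}^+$), so applying (\ref{Hmu}) inside the homogeneous component $\pset_{\mu(f)}/\pset_{\mu(f)}^+$ gives
\[
\hm(f)=\hm(f_1)=\sum_{s\in S}\hm(a_s)\hm(\phi)^s=\hm(\phi)^{s_0}\cdot \beta,\qquad \beta:=\sum_{s\in S}\hm(a_s)\hm(\phi)^{s-s_0}.
\]

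It remains to check that the prime element $\hm(\phi)$ does not divide $\beta$ in $\ggm$. Reducing modulo the prime ideal $\hm(\phi)\ggm$ kills all summands with $s>s_0$, yielding $\beta\equiv\hm(a_{s_0})\pmod{\hm(\phi)\ggm}$. Since $0\neq a_{s_0}$ has $\deg(a_{s_0})<\deg(\phi)$, $\mu$-minimality of $\phi$ directly forces $\hm(\phi)\nmid\hm(a_{s_0})$, so $\beta\notin\hm(\phi)\ggm$ and $s_0$ is precisely the order of $\hm(\phi)$ in $\hm(f)$. I foresee no serious obstacle: the proof is bookkeeping in $\ggm$ once Lemma \ref{minimal0} supplies $\mu$-additivity of the $\phi$-expansion, with $\mu$-minimality invoked in the two crucial places (to identify $\mu(f)$ with $\min_s\mu(a_s\phi^s)$, and to rule out $\hm(\phi)\mid\hm(a_{s_0})$).
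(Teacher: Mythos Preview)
Your argument is correct. The paper does not actually prove this lemma: it appears in the review section (Section~1), where results are quoted from \cite{CompRP}, \cite{KeyPol} and \cite{Vaq} without proof, so there is no in-paper argument to compare against. Your direct approach---reading off $s_{\mu(\phi)}(f)=\min\{s:\mu(a_s\phi^s)=\mu(f)\}$ from $\mu$-minimality via Lemma~\ref{minimal0}, then computing $\hm(f)$ coset-wise in $\pset_{\mu(f)}/\pset_{\mu(f)}^+$ and reducing modulo the prime ideal $\hm(\phi)\ggm$---is exactly the natural elementary proof and is essentially how the result is established in the references. One small remark on presentation: when you write $\hm(f_1)=\sum_{s\in S}\hm(a_s)\hm(\phi)^s$, the iterated use of (\ref{Hmu}) could fail if some partial sum had strictly larger $\mu$-value; your parenthetical ``inside the homogeneous component $\pset_{\mu(f)}/\pset_{\mu(f)}^+$'' is the right way around this (the identity holds at the level of cosets in that abelian group regardless of cancellation), and it might be worth making that explicit rather than invoking (\ref{Hmu}).
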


There is a natural addition law for Newton polygons. 
Consider two  polygons $N$, $N'$ with sides $S_1,\cdots,S_k$; $S'_1,\dots,S'_{k'}$, respectively. 

The left endpoint of the sum $N+N'$ is  the vector sum in $\qqg$ of the left endpoints of $N$ and $N'$, whereas the sides of $N+N'$ are obtained by joining all sides in the multiset $\left\{S_1,\dots,S_k,S'_1,\dots,S'_{k'}\right\}$, ordered by increasing slopes.

\begin{theorem}\label{product}
For any non-zero $g,h\in \kx$ we have $$\npphm(gh)=\npphm(g)+\npphm(h).$$
\end{theorem}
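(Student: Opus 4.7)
The plan is to verify the equality by matching three additive invariants of the principal Newton polygon: the length, the abscissa of the left endpoint, and the family of supporting-line intercepts at slopes $-\la$ for $\la\in\qg$ with $\la>\mu(\phi)$. Since every side of $\npphm(f)$ has slope strictly less than $-\mu(\phi)$, a convex lower chain of this type is determined by these three data; once each is shown to be multiplicative, the theorem follows.

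For the length, invoke Lemma \ref{length}: $\ell(\npphm(f))$ equals the multiplicity of the prime element $\hm(\phi)$ in $\hm(f)$ inside the graded domain $\ggm$. Multiplicativity $\hm(gh)=\hm(g)\hm(h)$ together with primality of $\hm(\phi)$ (valid since $\phi$ is a key polynomial for $\mu$, hence $\mu$-irreducible) yields $\ell(\npphm(gh))=\ell(\npphm(g))+\ell(\npphm(h))$. The abscissa of the left endpoint of $\npphm(f)$ is $\ord_\phi(f)$, which is additive because $\phi$ is irreducible in $\kx$. For the supporting-line intercepts, fix $\la\in\qg$ with $\la>\mu(\phi)$ and consider the MacLane--Vaqui\'e augmented valuation $\mu_\la:=[\mu;\phi,\la]$. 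The standard formula gives $\mu_\la(f)=\min_s\{\mu(a_s)+s\la\}$ for the $\phi$-expansion $f=\sum a_s\phi^s$; geometrically, this is the $y$-intercept of the line of slope $-\la$ that first meets the cloud $\cc$ of $f$ from below. Because $\la>\mu(\phi)$, the minimum is attained at an abscissa that lies in the principal part, so the intercept coincides with the corresponding intercept for $\npphm(f)$. Since $\mu_\la$ is a valuation, $\mu_\la(gh)=\mu_\la(g)+\mu_\la(h)$, providing additivity of intercepts for every $\la>\mu(\phi)$. Letting $\la$ grow large (so that the supporting line of slope $-\la$ passes through the left endpoint) recovers additivity of the ordinate of the left endpoint from additivity of $\mu_\la$ and $\ord_\phi$.

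The main step that requires care is the convex-duality reconstruction: two principal polygons sharing left endpoint, length, and support function on $(\mu(\phi),+\infty)$ must be equal. In the ordered-group setting of $\qg$ this is best verified by reading the successive vertices of $\npphm(f)$ from the breakpoints of the piecewise-linear concave function $\la\mapsto \mu_\la(f)$: the slope of this function on each linear piece is the abscissa of a vertex, and the breakpoints are precisely the values of $\la$ such that $-\la$ is a slope of a side of $\npphm(f)$. With this dictionary in place, the additivity of $\mu_\la$ on each linear piece translates exactly into the side-concatenation rule that defines the sum of polygons. A secondary technical point is to confirm that $\mu_\la=[\mu;\phi,\la]$ is a valuation satisfying the displayed $\min$-formula for every $\la>\mu(\phi)$, which is standard in the Vaqui\'e framework since $\phi\in\kpm$.
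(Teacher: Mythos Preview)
The paper does not prove Theorem~\ref{product}; it is stated in the review section (Section~\ref{secVals}) as background taken from \cite{CompRP}, \cite{KeyPol} and \cite{Vaq}. So there is no ``paper's own proof'' to compare against.

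Your argument is correct and is essentially the standard one. The key observation, which you identify, is that for each $\la\in\qg$ with $\la>\mu(\phi)$ the augmented valuation $\mu_\la=[\mu;\phi,\la]$ computes the support function of $\npphm(f)$ at slope $-\la$, and multiplicativity of $\mu_\la$ yields additivity of this support function. The endpoints are handled by $\ord_\phi$ (left abscissa) and Lemma~\ref{length} (right abscissa), both of which are additive because $\phi$ is irreducible in $\kx$ and $\hm(\phi)$ is prime in $\ggm$, respectively. Since a lower convex chain with finitely many integer abscissas is determined by its support function on the open half-line of admissible slopes together with its two endpoint abscissas, the reconstruction step goes through; in the divisible group $\qg$ the breakpoints of $\la\mapsto\mu_\la(f)$ are well-defined (they are of the form $(\mu(a_s)-\mu(a_{s'}))/(s'-s)$), so no archimedean hypothesis is needed. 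Your remark that the support-function dictionary recovers precisely the side-concatenation rule defining $N+N'$ is the point usually left implicit; it is just the compatibility of Minkowski addition of convex chains with addition of their support functions.

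One small tightening: you do not actually need the separate endpoint arguments. The piecewise-affine function $\la\mapsto\mu_\la(f)$ on $(\mu(\phi),\infty)$ has integer slopes equal to the abscissas of the vertices of $\npphm(f)$; its first linear piece (for $\la$ slightly above $\mu(\phi)$) has slope $\ell(\npphm(f))$ and its last piece (for large $\la$) has slope $\ord_\phi(f)$. So both endpoint abscissas, and all ordinates, are already encoded in the family $(\mu_\la(f))_{\la>\mu(\phi)}$, and the whole theorem follows from the single identity $\mu_\la(gh)=\mu_\la(g)+\mu_\la(h)$.
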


\subsection{Residual polynomial operator}\label{subsecRi} 
For $\mu\in\Vkp$, take $\phm\in\kpm$ of minimal degree $m$. Denote  
$e=\erel(\mu)$ and $\ga=\mu(\phm)$. 

Let $\ep\in\ggm^*$ be a unit of degree $-e\ga$. By Theorem \ref{kstructure}, the  element $\xi=\hm(\phm)^e\ep$ is transcendental over $\kappa$ and satisfies $\Delta=\kappa[\xi]$.

The choice of the pair $\phm,\ep$ determines a residual polynomial operator;
$$
R:=R_{\mu,\phm,\ep}\colon\;\kx\lra \kappa[y],
$$
where $y$ is another indeterminate. We agree that $R(0)=0$.


For a non-zero $f \in K[x]$ with $\phm$-expansion $f=\sum\nolimits_{0\le s}a_s\phm^s$, let us denote
\begin{equation}\label{sf}
S=S_{\ga}(N_{\mu,\phm}(f)),\qquad s(f)=s_{\ga}(f),\qquad s'(f)=s'_{\ga}(f).
\end{equation}

\begin{definition}\label{nrc}
The \emph{degree} of $S$ is the integer $d=(s'(f)-s(f))/e$. Denote
$$
s_j=s_0+je,\qquad \rc_j(f)=\hm(a_{s_j})\ep^{-j}\in\ggm^*, \qquad  0\le j\le d.
$$
These units $\rc_j(f)$ of degree $\mu(a_{s_j})+je\ga$ are called \emph{residual coefficients} of $f$.
\end{definition}

For any point $Q_s\in \cc$ as in (\ref{cloud}), we have
$$
Q_s\in S\ \sii\  s=s_j,\quad\deg(\rc_j(f))=\mu(f)-s_0\ga,\ \mbox{ for some }0\le j\le d.
$$

By construction,  $Q_{s_0}=Q_{s(f)}$ and $Q_{s_d}=Q_{s'(f)}$ lie on $S$, but for $0<j<d$, the point $Q_{s_j}\in\cc$ may lie strictly above $S$. 
We define
$$
R(f)=\zeta_0+\zeta_1\,y+\cdots+\zeta_{d-1}y^{d-1}+y^d\in \kappa[y],
$$
with coefficients:
$$
\zeta_j=\begin{cases}
\rc_j(f)/\rc_d(f)\in \kappa^*,&\quad \mbox{ if }\ Q_{s_j}\mbox{ lies on }S,\\
0,&\quad \mbox{ otherwise}.
     \end{cases}
$$ 


\begin{figure}
\caption{Newton polygon $N_{\mu,\phm}(f)$. 
The line $L$ has slope $-\mu(\phm)$}\label{figAlpha}
\begin{center}
\setlength{\unitlength}{5mm}
\begin{picture}(14,8)
\put(2.75,4.85){$\bullet$}\put(3.5,4.4){$\times$}\put(4.5,3.9){$\times$}\put(5.5,3.4){$\times$}\put(6.4,2.95){$\times$}\put(7.4,2.45){$\times$}\put(8.35,2.05){$\bullet$}\put(6.5,4.25){$\bullet$}
\put(-1,0.5){\line(1,0){15}}\put(0,-0.5){\line(0,1){8}}
\put(-1,7){\line(2,-1){12}}
\put(2.9,5){\line(-1,2){1}}\put(2.9,5.04){\line(-1,2){1}}
\put(3,5){\line(2,-1){4.5}}\put(3,5.04){\line(2,-1){5.5}}
\put(8.5,2.2){\line(4,-1){3}}\put(8.5,2.24){\line(4,-1){3}}
\multiput(2.9,.4)(0,.25){19}{\vrule height2pt}
\multiput(8.55,.4)(0,.25){8}{\vrule height2pt}
\multiput(6.7,.4)(0,.25){16}{\vrule height2pt}
\put(8.45,2.6){\begin{footnotesize}$Q_{s_d}$\end{footnotesize}}
\put(2.8,5.4){\begin{footnotesize}$Q_{s_0}$\end{footnotesize}}
\put(6.4,5){\begin{footnotesize}$Q_{s_j}$\end{footnotesize}}
\put(6.5,-.3){\begin{footnotesize}$s_j$\end{footnotesize}}
\put(8.3,-.3){\begin{footnotesize}$s_d=s'(f)$\end{footnotesize}}
\put(.9,-.3){\begin{footnotesize}$s(f)=s_0$\end{footnotesize}}
\put(11.1,0.8){\begin{footnotesize}$L$\end{footnotesize}}
\put(-.5,-.3){\begin{footnotesize}$0$\end{footnotesize}}
\put(-.2,6.5){\line(1,0){.4}}
\put(-1.8,6.2){\begin{footnotesize}$\mu(f)$\end{footnotesize}}
\end{picture}
\end{center}
\end{figure}
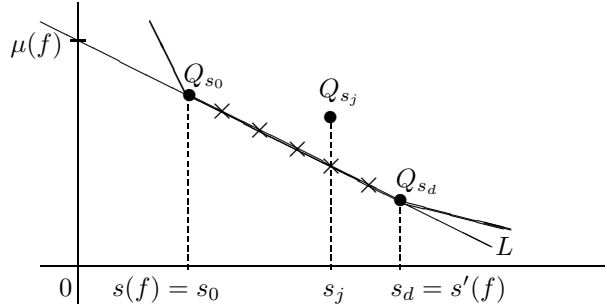

Let us emphasize that $\zeta_0$ is always nonzero. In other words,
\begin{equation}\label{basicR}
y\nmid R(f) ,\qquad \forall\,f\in\kx.
\end{equation}

The essential property of the operator $R$ is described in the next result.

\begin{theorem}\label{Hmug}
For  $\mu\in\Vkp$ and any $f\in\kx$, 
$$\hm(f)=\rc_d(f)\hm(\phm)^{s(f)}R(f)(\xi).$$
\end{theorem}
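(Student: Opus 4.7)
We may assume $f\ne0$. The plan is to expand $\hm(f)$ additively along the $\phm$-expansion of $f$ and identify the surviving terms with those appearing in $R(f)(\xi)$.

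Write the canonical $\phm$-expansion $f=\sum_{s\ge 0} a_s\phm^s$. Since $\phm$ is $\mu$-minimal, Lemma \ref{minimal0} gives $\mu(f)=\min\{\mu(a_s)+s\ga\mid s\ge0\}$, and this minimum is attained precisely for those $s$ such that $Q_s=(s,\mu(a_s))$ lies on the $\ga$-component $S=S_\ga(\nphm(f))$. For any other $s$, we have $\mu(a_s\phm^s)>\mu(f)$, so $\hm(a_s\phm^s)\in\pset_{\mu(f)}^+$. Applying (\ref{Hmu}) term by term, the terms off $S$ contribute zero, hence
\[
\hm(f)\;=\;\sum_{s:\,Q_s\in S} \hm(a_s)\,\hm(\phm)^s.
\]

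Next, I claim that the integer abscissas of points of $\cc$ lying on $S$ are exactly a subset of $\{s_j=s_0+je\mid 0\le j\le d\}$. Indeed, if $Q_s\in S$ then $\mu(a_s)-\mu(a_{s_0})=-(s-s_0)\ga$. Since $a_s,a_{s_0}$ have degree less than $m$, the left-hand side lies in $\g_{\mu,m}$ (Proposition \ref{vphi}), so $(s-s_0)\ga\in\g_{\mu,m}$; by the definition of $\erel(\mu)=e$ as the index $(\gm\colon\g_{\mu,m})$, this forces $e\mid(s-s_0)$. Thus the sum becomes
\[
\hm(f)\;=\;\sum_{j:\,Q_{s_j}\in S} \hm(a_{s_j})\,\hm(\phm)^{s_0+je}.
\]

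Now substitute $\hm(a_{s_j})=\rc_j(f)\,\ep^j$ from Definition \ref{nrc} and use $\xi=\hm(\phm)^e\ep$:
\[
\hm(f)\;=\;\hm(\phm)^{s_0}\sum_{j:\,Q_{s_j}\in S} \rc_j(f)\,\bigl(\hm(\phm)^e\ep\bigr)^j\;=\;\hm(\phm)^{s(f)}\sum_{j:\,Q_{s_j}\in S} \rc_j(f)\,\xi^j.
\]
Factor out $\rc_d(f)$: the coefficient $\rc_j(f)/\rc_d(f)$ is a homogeneous unit of degree $\bigl(\mu(a_{s_j})+je\ga\bigr)-\bigl(\mu(a_{s_d})+de\ga\bigr)$, and the assumption $Q_{s_j}\in S$ together with $Q_{s_d}\in S$ gives $\mu(a_{s_j})+s_j\ga=\mu(a_{s_d})+s_d\ga$, i.e. this degree vanishes. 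Hence $\rc_j(f)/\rc_d(f)\in\Delta^*=\kappa^*$, and by the defining formula for the coefficients $\zeta_j$ of $R(f)$,
\[
\sum_{j:\,Q_{s_j}\in S} \frac{\rc_j(f)}{\rc_d(f)}\,\xi^j\;=\;\sum_{j=0}^{d}\zeta_j\,\xi^j\;=\;R(f)(\xi),
\]
where terms with $Q_{s_j}$ strictly above $S$ vanish because $\zeta_j=0$ by definition and the leading term $\zeta_d=1$ matches $\rc_d(f)/\rc_d(f)=1$. Combining yields the claimed identity.

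The only nontrivial point is the congruence $s\equiv s_0\pmod{e}$ for abscissas of lattice points on $S$; everything else is a direct computation from the definitions and the additivity property (\ref{Hmu}) of $\hm$ on equi-valued sums.
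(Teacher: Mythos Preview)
Your proof is correct. The paper itself does not supply a proof of Theorem~\ref{Hmug}; it is one of the background results quoted from \cite{CompRP} in the review Section~\ref{secVals}. Your argument is exactly the standard one: use the $\mu$-minimality of $\phm$ (Lemma~\ref{minimal0}) to identify $\mu(f)$ and reduce $\hm(f)$ to the sum of $\hm(a_s)\hm(\phm)^s$ over indices $s$ with $Q_s$ on the $\ga$-component $S$; invoke the cyclic structure of $\gm/\g_{\mu,m}$ (Proposition~\ref{vphi}) to see that such $s$ satisfy $s\equiv s_0\pmod e$; and then rewrite via the definitions of $\rc_j(f)$, $\xi$, and the coefficients $\zeta_j$ of $R(f)$. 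One minor remark: your appeal to (\ref{Hmu}) ``term by term'' is best justified by observing that the quotient map $\pset_{\mu(f)}\to\pset_{\mu(f)}/\pset_{\mu(f)}^+$ is additive and kills every $a_s\phm^s$ with $\mu(a_s\phm^s)>\mu(f)$, which avoids any worry about partial sums; but this is implicit in what you wrote.
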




\begin{corollary}\label{prodR} Let  $\mu\in\Vkp$ and $f,g\in\kx$. Then, 
\begin{enumerate}
 \item $R(fg)=R(f)R(g)$.
 \item $f \smu g\ \iff\ \rc_d(f)=\rc_d(g), \ s(f)=s(g) \ \mbox{ and }\ R(f)=R(g)$.
\end{enumerate}
\end{corollary}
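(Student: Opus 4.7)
The plan is to derive both statements directly from the explicit factorization formula of Theorem \ref{Hmug}, using the multiplicativity of $\hm$, Theorem \ref{product}, and the transcendence of $\xi$ over $\kappa$ to compare leading coefficients.

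For part (1), I would start by writing the three instances of Theorem \ref{Hmug}:
$$\hm(f)=\rc_d(f)\,\hm(\phm)^{s(f)}R(f)(\xi),\quad \hm(g)=\rc_d(g)\,\hm(\phm)^{s(g)}R(g)(\xi),$$
and the analogue for $fg$. From $\hm(fg)=\hm(f)\hm(g)$ (see (\ref{Hmu})) I obtain
$$\rc_d(fg)\,\hm(\phm)^{s(fg)}R(fg)(\xi)=\rc_d(f)\rc_d(g)\,\hm(\phm)^{s(f)+s(g)}R(f)(\xi)R(g)(\xi).$$
Next I would observe that $s(fg)=s(f)+s(g)$. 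This follows from Lemma \ref{length} (the integer $s(\cdot)$ measures the exact order of divisibility of $\hm(\cdot)$ by the prime $\hm(\phm)$ in $\ggm$), combined with the fact that $\hm(\phm)\nmid R(f)(\xi)$, because the constant term $\zeta_0$ of $R(f)$ lies in $\kappa^*$ (see (\ref{basicR})). Alternatively, one can invoke Theorem \ref{product} on principal polygons. Cancelling the $\hm(\phm)^{s(f)+s(g)}$ factor in the integral domain $\ggm$ gives
$$\rc_d(fg)\,R(fg)(\xi)=\rc_d(f)\rc_d(g)\,R(f)(\xi)R(g)(\xi).$$

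Comparing degrees in $\ggm$, both $\rc_d(fg)$ and $\rc_d(f)\rc_d(g)$ have degree $\mu(fg)-s(fg)\ga$, so their quotient is a unit of $\Delta$; by Theorem \ref{kstructure}, $\Delta^\ast=\kappa^\ast$, so this quotient is some $c\in\kappa^\ast$. The previous equation then becomes $R(fg)(\xi)=c\,R(f)(\xi)R(g)(\xi)$ in $\Delta=\kappa[\xi]$. Since $\xi$ is transcendental over $\kappa$, this is a genuine polynomial identity $R(fg)(y)=c\,R(f)(y)R(g)(y)$ in $\kappa[y]$. Because $R(fg)$ and $R(f)R(g)$ are both monic, comparing leading coefficients forces $c=1$ and $\deg R(fg)=\deg R(f)+\deg R(g)$; hence $R(fg)=R(f)R(g)$ and, as a by-product, $\rc_d(fg)=\rc_d(f)\rc_d(g)$.

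For part (2), the direction $(\Leftarrow)$ is immediate by substituting the equal data into the formula of Theorem \ref{Hmug}. For $(\Rightarrow)$, suppose $\hm(f)=\hm(g)$; then
$$\rc_d(f)\,\hm(\phm)^{s(f)}R(f)(\xi)=\rc_d(g)\,\hm(\phm)^{s(g)}R(g)(\xi).$$
Lemma \ref{length} combined with $\hm(\phm)\nmid R(f)(\xi),R(g)(\xi)$ yields $s(f)=s(g)$; cancelling the common $\hm(\phm)^{s(f)}$ reduces the problem to the argument already used in part (1): the two sides belong to the same homogeneous component of $\ggm$, so $\rc_d(f)/\rc_d(g)\in\kappa^\ast$, and transcendence of $\xi$ together with monicity of $R(f)$ and $R(g)$ forces simultaneously $R(f)=R(g)$ and $\rc_d(f)=\rc_d(g)$.

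The only place requiring minor care is justifying $s(fg)=s(f)+s(g)$ and the fact that $\hm(\phm)\nmid R(\cdot)(\xi)$, both of which follow cleanly from Lemma \ref{length} and (\ref{basicR}); the rest is a routine exploitation of the fact that $\Delta=\kappa[\xi]$ is a polynomial ring in a transcendental variable, so every identity in $\Delta$ reads off as a polynomial identity in $\kappa[y]$.
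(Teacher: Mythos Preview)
Your proof is correct and is precisely the natural derivation the paper intends: the corollary is stated immediately after Theorem \ref{Hmug} without proof, and your argument---multiplicativity of $\hm$, additivity of $s(\cdot)$ via Lemma \ref{length}, comparison of homogeneous degrees to force the ratio of leading residual coefficients into $\kappa^*=\Delta^*$, and then reading off the polynomial identity via the transcendence of $\xi$ from Theorem \ref{kstructure}---is exactly how one unpacks it. The only cosmetic remark is that once you invoke Lemma \ref{length} to identify $s(\cdot)$ with the $\hm(\phm)$-adic order, the additivity $s(fg)=s(f)+s(g)$ follows directly from $\hm(fg)=\hm(f)\hm(g)$ and primality of $\hm(\phm)$; the observation that $\hm(\phm)\nmid R(f)(\xi)$ is then a consequence rather than an ingredient.
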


The (non-canonical) operator $R$ is a down-to-earth representation of the canonical operator $\rr$, and it facilitates a characterization of key polynomials for $\mu$.
\begin{equation}\label{RR}
\rr(f)=\xi^{\lceil s(f)/e\rceil}R(f)(\xi)\Delta,\qquad \forall\,f\in\kx. 
\end{equation}

\begin{theorem}\label{charKP}
For $\mu\in\Vkp$, take $\phm\in\kpm$ of minimal degree $m$. 
A monic $f\in\kx$ is a key polynomial for $\mu$ if and  only if either
\begin{enumerate}
\item[(1)] $\deg(f)=m$ \,and\; $f\smu\phm$, in which case \ $\rr(f)=\xi\Delta$, or
\item[(2)] $\deg(f)=me\deg(R(f))$ \,and\; $R(f)$ is irreducible in $\kappa[y]$,
in which case 

$\rr(f)=R(f)(\xi)\Delta$  \;and\; $N_{\mu,\phm}(f)$ is one-sided of slope $-\mu(\phm)$.
\end{enumerate}
\end{theorem}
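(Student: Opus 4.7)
My approach is to exploit the explicit description $\Delta=\kappa[\xi]$ from Theorem \ref{kstructure}, the canonical bijection $\rr\colon \kpm/\!\!\smu\,\to\,\mx(\Delta)$ from Theorem \ref{Max}, and the formula (\ref{RR}).

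For the forward direction, if $f\in\kpm$ then $\rr(f)=\xi^{\lceil s(f)/e\rceil}R(f)(\xi)\Delta$ is a maximal ideal of the polynomial ring $\kappa[\xi]$. Since $R(f)(\xi)$ is coprime to $\xi$ by (\ref{basicR}), this maximal ideal is generated either by $\xi$ (forcing $R(f)=1$) or by $R(f)(\xi)$ (forcing $s(f)=0$ and $R(f)$ irreducible in $\kappa[y]$). In the first case, Theorem \ref{Hmug} reads $\hm(f)=\rc_d(f)\hm(\phm)^{s(f)}$, and primality of $\hm(f)$ pins down $s(f)=1$, so $\rr(f)=\xi\Delta=\rr(\phm)$; then $f\smu\phm$ by Theorem \ref{Max}, and the mutual $\mu$-minimality of $f$ and $\phm$ forces $\deg f=m$, which is case (1). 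In the second case, we already have $R(f)$ irreducible in $\kappa[y]$ and $\rr(f)=R(f)(\xi)\Delta$.

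The delicate step, which I expect to be the main obstacle, is deriving $\deg f=me\deg R(f)$ in case (2). My plan is to combine Theorem \ref{bound} (which gives $\mu(f)=(\ga/m)\deg f$ with $\ga=\mu(\phm)$) with the position of the rightmost cloud point $Q_\ell=(\ell,\mu(a_\ell))$, where $\ell=\lfloor\deg f/m\rfloor$ and $a_\ell$ is monic of degree $r=\deg f-m\ell\in[0,m)$. Since $Q_\ell$ lies on or above the $\ga$-line through $S_\ga(N_{\mu,\phm}(f))$, we have $\mu(a_\ell)\ge\ga r/m$. If $r>0$, Theorem \ref{bound} applied to $a_\ell$ reverses the inequality, forcing equality and hence $\mu$-minimality of $a_\ell$ via the equality clause; but $\deg a_\ell<m$ makes $\hm(a_\ell)$ a unit in $\ggm$, whence $a_\ell\mid_\mu 1$, contradicting this $\mu$-minimality. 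Hence $r=0$, $Q_\ell$ lies on $S_\ga$, and $\ell=s'(f)=s(f)+de=de$; the polygon, sharing both endpoints with $S_\ga$, is one-sided of slope $-\ga$, and $\deg f=m\ell=mde=me\deg R(f)$.

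For the backward direction, case (1) is immediate from Lemma \ref{mid=sim} applied with $\phi=\phm$, using $\phm\mmu f$ (from $f\smu\phm$) and $\deg f=m=\deg\phm$. For case (2), from $\deg f=mde$ and $s'(f)-s(f)=ed$ combined with $s(f)\ge 0$ and $s'(f)\le\ell=de$ one reads off $s(f)=0$, $s'(f)=de=\ell$, the one-sided polygon, and $\rr(f)=R(f)(\xi)\Delta\in\mx(\Delta)$. Theorem \ref{Max} then furnishes a key polynomial $\phi_0$ with $\rr(\phi_0)=\rr(f)$; the already-proved forward direction yields $\deg\phi_0=me\deg R(\phi_0)$, and matching monic irreducible generators of the common maximal ideal $R(f)(\xi)\Delta=R(\phi_0)(\xi)\Delta$ gives $R(\phi_0)=R(f)$, hence $\deg\phi_0=\deg f$. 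With $r=0$ for both, $\rc_d(f)=\rc_d(\phi_0)=\ep^{-d}$, so Theorem \ref{Hmug} forces $\hm(f)=\hm(\phi_0)$; thus $f\smu\phi_0$, and a final application of Lemma \ref{mid=sim} places $f$ in $\kpm$.
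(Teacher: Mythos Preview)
The paper does not prove Theorem \ref{charKP}; it is stated in Section \ref{subsecRi} as part of the review material imported from \cite{CompRP} and \cite{KeyPol}, so there is no in-paper argument to compare against.

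Your argument is correct. The forward direction is clean: maximality of $\rr(f)=\xi^{\lceil s(f)/e\rceil}R(f)(\xi)\Delta$ in $\kappa[\xi]$ together with $y\nmid R(f)$ forces the dichotomy, and in the first branch you rightly pass from $\lceil s(f)/e\rceil=1$ to $s(f)=1$ by using the primality of $\hm(f)\ggm$ (not just maximality of $\rr(f)$). The computation showing $r=0$ via Theorem \ref{bound} and the unit property of $\hm(a_\ell)$ is the key step and is carried out correctly; once $r=0$ and $s(f)=0$, the one-sidedness of $N_{\mu,\phm}(f)$ and $\deg f=me\deg R(f)$ follow as you indicate.

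One small point in the backward direction deserves a sentence: when you invoke Theorem \ref{Max} to produce $\phi_0\in\kpm$ with $\rr(\phi_0)=R(f)(\xi)\Delta$ and then apply the forward direction, you tacitly use that $\phi_0$ falls into case (2) rather than case (1). This is immediate since $R(f)$ is monic irreducible with $R(f)\ne y$ by (\ref{basicR}), so $R(f)(\xi)\Delta\ne\xi\Delta$; but it is worth saying explicitly before reading off $\deg\phi_0=me\deg R(\phi_0)$. After that, your identification $\rc_d(f)=\rc_d(\phi_0)=\ep^{-d}$ (from $a_\ell=1$ for both) and the conclusion $\hm(f)=\hm(\phi_0)$ via Theorem \ref{Hmug} are correct, and Lemma \ref{mid=sim} finishes the job.
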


\begin{corollary}\label{efphi}
For $\mu\in\Vkp$, take $\phi\in\kpm$. Then, if $\phi\smu\phm$, we have
$$
e(v_{\mu,\phi}/v)=e(\mu)/\erel(\mu),\qquad f(v_{\mu,\phi}/v)=f(\mu),
$$
whereas in the case $\phi\not\smu\phm$, we get
$$
e(v_{\mu,\phi}/v)=e(\mu),\qquad 
f(v_{\mu,\phi}/v)=f(\mu)\deg(R(\phi)).$$
\end{corollary}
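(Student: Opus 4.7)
The plan is to reduce both invariants to data already packaged by Propositions \ref{vphi} and \ref{maxideal} together with Theorem \ref{charKP}. Set $m'=\deg(\phi)$. Proposition \ref{vphi} identifies the value group of $v_{\mu,\phi}$ with $\g_{\mu,m'}$, so $e(v_{\mu,\phi}/v)=(\g_{\mu,m'}:\g)$; Proposition \ref{maxideal} identifies the residue field with $\Delta/\rr(\phi)$, so $f(v_{\mu,\phi}/v)=[\Delta/\rr(\phi):k]$.

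In the case $\phi\smu\phm$, equality of graded-piece degrees forces $\mu(\phi)=\mu(\phm)$; since $\phi$ is $\mu$-minimal, Theorem \ref{bound} then forces $\deg(\phi)=m$, so $\g_{\mu,m'}=\g_{\mu,m}$. The indices $(\gm:\g)=e(\mu)$ and $(\gm:\g_{\mu,m})=\erel(\mu)$ from the definitions preceding Theorem \ref{kstructure} give $(\g_{\mu,m}:\g)=e(\mu)/\erel(\mu)$. For the residual degree, since $\phi$ now has minimal degree in $\kpm$, the second assertion of Proposition \ref{maxideal} supplies an isomorphism $\kappa\cong k_{\mu,\phi}$, so $f(v_{\mu,\phi}/v)=[\kappa:k]=f(\mu)$.

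For $\phi\not\smu\phm$, the characterization of key polynomials (Theorem \ref{charKP}) places us in its case $(2)$: $m'=me\deg(R(\phi))$, the polynomial $R(\phi)$ is irreducible in $\kappa[y]$, the polygon $\nphm(\phi)$ is one-sided of slope $-\mu(\phm)$ (so $s(\phi)=0$), and $\rr(\phi)=R(\phi)(\xi)\Delta$. Since $\xi$ is transcendental over $\kappa$ with $\Delta=\kappa[\xi]$, evaluation at $\xi$ induces an isomorphism $\Delta/\rr(\phi)\cong\kappa[y]/(R(\phi)(y))$, a field extension of degree $\deg(R(\phi))$ over $\kappa$; taking $k$-dimensions gives $f(v_{\mu,\phi}/v)=f(\mu)\deg(R(\phi))$.

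It remains to show $\g_{\mu,m'}=\gm$ in this second case. The left endpoint $(0,\mu(a_0))$ of the one-sided polygon $\nphm(\phi)$ lies on the common line of slope $-\mu(\phm)$ through all its points, whence $\mu(\phi)=\mu(a_0)\in\g_{\mu,m}\subseteq\g_{\mu,m'}$. If $e>1$ or $\deg(R(\phi))>1$, then $m'>m$, so $\g_{\mu,m'}\supseteq\g_{\mu,m+1}\ni\mu(\phm)$; combined with $\g_{\mu,m}\subseteq\g_{\mu,m'}$, the relation $\gm=\gen{\g_{\mu,m},\mu(\phm)}$ from Proposition \ref{vphi} yields $\g_{\mu,m'}\supseteq\gm$, and the reverse inclusion is immediate. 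In the remaining degenerate subcase $e=\deg(R(\phi))=1$ one already has $\gm=\g_{\mu,m}$, so the same conclusion holds. Thus $e(v_{\mu,\phi}/v)=(\gm:\g)=e(\mu)$. This final value-group identification is the only mildly delicate step; everything else is direct bookkeeping against the data supplied by the characterization of key polynomials.
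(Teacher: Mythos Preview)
Your argument is correct and, for the residual-degree computations in both cases, coincides with the paper's proof (Propositions \ref{vphi}, \ref{maxideal}, Theorem \ref{charKP}, and the identification $\Delta/\rr(\phi)\simeq\kappa[y]/(R(\phi))$ via Theorem \ref{kstructure}).

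The genuine difference lies in the ramification index for $\phi\not\smu\phm$. The paper does not argue this point at all: it simply cites \cite[Cor.~6.4]{KeyPol} for $e(v_{\mu,\phi}/v)=e(\mu)$. You instead give a self-contained derivation of $\g_{\mu,m'}=\gm$ from the data in Theorem \ref{charKP}, splitting into the subcase $m'>m$ (where $\mu(\phm)\in\g_{\mu,m'}$ and hence $\gm=\gen{\g_{\mu,m},\mu(\phm)}\subseteq\g_{\mu,m'}$) and the degenerate subcase $e=\deg(R(\phi))=1$ (where $\gm=\g_{\mu,m}$ directly). This is a nice internalization of what the paper treats as a black box; it also makes the corollary independent of the external reference. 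One cosmetic remark: the sentence establishing $\mu(\phi)=\mu(a_0)\in\g_{\mu,m}$ is correct but plays no role in the subsequent argument, so you could drop it. Likewise, in the first case you invoke Theorem \ref{bound} to force $\deg(\phi)=m$; this is fine, though Corollary \ref{samefiber} gives it in one line.
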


\begin{proof}
The statements in the case  $\phi\smu\phm$ follow from Propositions \ref{vphi} and \ref{maxideal}.

If  $\phi\not\smu\phm$, then $e(v_{\mu,\phi}/v)=e(\mu)$ by \cite[Cor. 6.4]{KeyPol}.

Also, $\rr(\phi)=R(\phi)(\xi)\Delta$ by Theorem \ref{charKP}. Hence, the statement about $f(v_{\mu,\phi}/v)$ follows from Proposition \ref{maxideal} and Theorem \ref{kstructure}:
$$
k_{\mu,\phi}\simeq \Delta/\rr(\phi)=\kappa[\xi]/\left(R(\phi)(\xi)\right)\simeq \kappa[y]/\left(R(\phi)\right), 
$$
so that $[k_{\mu,\phi}\colon \kappa]=\deg(R(\phi))$. Also, $[\kappa\colon k]=f(\mu)$ by definition.
\end{proof}

\begin{corollary}\label{samefiber}
Let $\phi,\phi'\in\kpm$. The following conditions are equivalent, and they imply \,$\deg(\phi)=\deg(\phi')$.
\begin{enumerate}
\item $\phi\smu\phi'$.
\item $\phi\mmu\phi'$. 
\item $\rr(\phi)=\rr(\phi')$.
\item $R(\phi)=R(\phi')$.
\end{enumerate}
\end{corollary}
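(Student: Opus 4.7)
The plan is to establish six implications — three trivial, three requiring the main tools of Section 1. By transitivity it suffices to prove (1) $\Leftrightarrow$ (2), (1) $\Leftrightarrow$ (3) and (1) $\Leftrightarrow$ (4). The implications (1) $\Rightarrow$ (2), (1) $\Rightarrow$ (3), (1) $\Rightarrow$ (4) follow at once from the definitions of $\smu$, $\mmu$ and $\rr$, and from Corollary \ref{prodR}(2), respectively. For (3) $\Rightarrow$ (1), Theorem \ref{Max} applies directly: $\rr$ descends to a bijection $\kpm/\!\!\smu\to\mx(\Delta)$, so $\rr(\phi)=\rr(\phi')$ identifies the $\smu$-classes of $\phi$ and $\phi'$.

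For (2) $\Rightarrow$ (1), I would argue as follows. The $\mu$-minimality of $\phi$ forces $\deg(\phi')\ge\deg(\phi)$, given that $\phi\mmu\phi'$ with $\phi'\ne 0$. Writing $\hm(\phi')=\hm(\phi)\,h$ in $\ggm$ and using the primality of $\hm(\phi')\ggm$, either $\hm(\phi)\in\hm(\phi')\ggm$ (i.e.\ $\phi'\mmu\phi$), or $h\in\hm(\phi')\ggm$, which after cancellation in the domain $\ggm$ makes $\hm(\phi)$ a unit. The latter possibility is excluded because a unit image for $\phi$ would make $\phi$ $\mu$-divide every nonzero polynomial, in particular nonzero constants, contradicting $\mu$-minimality. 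Hence $\phi'\mmu\phi$, and the same argument applied to $\phi'$ gives $\deg(\phi)\ge\deg(\phi')$. With $\deg(\phi)=\deg(\phi')$ now established, Lemma \ref{mid=sim} delivers $\phi\smu\phi'$.

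For (4) $\Rightarrow$ (1), by Corollary \ref{prodR}(2) it suffices to show that $R(\phi)=R(\phi')$ propagates to $s(\phi)=s(\phi')$ and $\rc_d(\phi)=\rc_d(\phi')$. I would use a case split based on Theorem \ref{charKP}. If $\phi\smu\phm$, the $\phm$-expansion is $\phi=\phm+a_0$ with $\mu(a_0)>\mu(\phm)$, so the $\mu(\phm)$-component of $N_{\mu,\phm}(\phi)$ reduces to the single vertex $(1,0)$, giving $s(\phi)=1$, $d=0$ and $R(\phi)=1$. If instead $\phi\not\smu\phm$, then $N_{\mu,\phm}(\phi)$ is one-sided of slope $-\mu(\phm)$, whence $s(\phi)=0$; since $\phi$ is monic, $\rc_d(\phi)=\ep^{-\deg R(\phi)}$. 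In either case $s(\phi)$ and $\rc_d(\phi)$ are determined by $R(\phi)$, and the two cases are distinguished by $R(\phi)=1$ versus $\deg R(\phi)\ge 1$. Hence $R(\phi)=R(\phi')$ places $\phi$ and $\phi'$ in the same case and propagates the required equalities; Corollary \ref{prodR}(2) then finishes.

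Finally, $\deg(\phi)=\deg(\phi')$ is a free byproduct: from (1), if $\deg(\phi)<\deg(\phi')$ then $\hm(\phi)=\hm(\phi')$ would give $\phi'\mmu\phi$ and violate the $\mu$-minimality of $\phi'$. The main obstacle I foresee is the case analysis in (4) $\Rightarrow$ (1), where one has to pin down precisely how the two parts of Theorem \ref{charKP} force the geometry of $N_{\mu,\phm}(\phi)$, and hence $s(\phi)$ and $\rc_d(\phi)$, to be encoded in the single datum $R(\phi)$.
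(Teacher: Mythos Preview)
Your argument is correct and uses the same toolkit the paper has in mind (Theorem~\ref{Max}, Lemma~\ref{mid=sim}, Theorem~\ref{charKP}, Corollary~\ref{prodR}); the paper states the result without proof precisely because it falls out of these. One small simplification: for $(4)\Rightarrow(1)$ you work hard to recover $s(\phi)$ and $\rc_d(\phi)$ from $R(\phi)$ in order to apply Corollary~\ref{prodR}(2), but Theorem~\ref{charKP} already hands you $(4)\Rightarrow(3)$ directly---in case~(1) one has $R(\phi)=1$ and $\rr(\phi)=\xi\Delta$, while in case~(2) $\rr(\phi)=R(\phi)(\xi)\Delta$ with $R(\phi)$ monic of positive degree and $y\nmid R(\phi)$; hence $R(\phi)=R(\phi')$ forces the same case and the same generator of $\rr$, and then $(3)\Rightarrow(1)$ via Theorem~\ref{Max} finishes. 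Also, in your $(2)\Rightarrow(1)$ step the contradiction ``$\hm(\phi)$ is a unit'' is more cheaply obtained from $\mu$-irreducibility (a generator of a nonzero prime ideal is never a unit) than from $\mu$-minimality, though your version is fine too.
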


\section{Inductive valuations}\label{secIndVals}

Most of the results of this section are extracted from \cite{CompRP} and \cite{Vaq}.

\subsection{Augmentation of valuations}\label{subsecAugmentation}
Let $\mu$ be an extension of $v$ to $\kx$ admitting key polynomials.
Let $\iota\colon \gm\hookrightarrow \g'$ be an order-preserving embedding of $\gm$ into another abelian ordered group.

\begin{definition}\label{muprima}
Take $\phi\in \kpm$ and $\ga\in \g'$ any element such that $\mu(\phi)<\ga$. 

The \emph{augmented valuation} $\mu'=[\mu;\phi,\ga]$ is defined as follows on $\phi$-expansions: $$\mu'\colon\kx\rightarrow \g' \cup \left\{\infty\right\},\qquad \sum\nolimits_{0\le s}a_s\phi^s\longmapsto \mn\left\{\mu(a_s)+s\ga\mid 0\le s\right\}.$$
\end{definition}

There is a canonical homomorphism of graded algebras:
$$\ggm\lra\gg_{\mu'},\qquad \hm(f)\longmapsto
\begin{cases}\hmp(f),& \mbox{ if }\mu(f)=\mu'(f),\\ 0,& \mbox{  if }\mu(f)<\mu'(f).\end{cases}$$

\begin{proposition}\label{extension} Let $\mu'=[\mu;\phi,\ga]$, and let $f\in \kx$ be a non-zero polynomial.
\begin{enumerate}
\item[(a)] The valuation $\mu'$ extends $v$ and satisfies $\mu(f)\le \mu'(f)$. 

Equality holds if and only if $\phi\nmid_{\mu}f$. In this case,  $\hmp(f)$ is a unit in $\ggmp$.
\item[(b)] The polynomial $\phi$ is a key polynomial for $\mu'$ of minimal degree, and $$N_{\mu',\phi}(f)=N_{\mu,\phi}(f).$$
\item[(c)] The value group of $\mu'$ is \ $\g_{\mu'}=\gen{\g_{\mu,\deg(\phi)},\ga}\subset\g'$. 
\item[(d)] $\k(\Delta_\mu\to \Delta_{\mu'})=\rr(\phi)$ \ and \ $ \im(\Delta_\mu\to \Delta_{\mu'})=\kappa(\mu')$.
\end{enumerate}
\end{proposition}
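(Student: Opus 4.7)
The plan is to prove the four assertions in order, treating (a) as the technical engine for the rest. The first—and, I expect, hardest—step is verifying that $\mu'$ is actually a valuation. Additivity $\mu'(f+g)\ge \min\{\mu'(f),\mu'(g)\}$ is immediate since $\phi$-expansion is $K$-linear in $f$. Multiplicativity is the real work: one writes out the $\phi$-expansion of a product $fg$ via the division $a_sb_t = q_{s,t}\phi+r_{s,t}$ with $\deg(r_{s,t})<\deg(\phi)$, and invokes the $\mu$-irreducibility of $\phi$ inside $\ggm$ to ensure that the leading contributions (those achieving the minimum value) do not cancel. Granting that, the inequality $\mu(f)\le\mu'(f)$ is a direct application of Lemma \ref{minimal0}: $\mu(f)=\min_s(\mu(a_s)+s\mu(\phi))$, and each summand is dominated by $\mu(a_s)+s\ga$ since $\mu(\phi)<\ga$.

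For the equality case, if $\phi\nmid_\mu f$ then by Lemma \ref{length} the $\mu$-minimum is attained at $s=0$, so $\mu(f)=\mu(a_0)\le\mu'(f)\le\mu(a_0)$, forcing equality. Conversely, $\mu(f)=\mu'(f)$ forces the $\mu'$-minimum to be attained only at $s=0$—since for $s\ge 1$ the $\mu'$-term strictly exceeds the $\mu$-term—and rearranging gives $s_{\mu(\phi)}(f)=0$, i.e.\ $\phi\nmid_\mu f$. For the unit assertion, when $\phi\nmid_\mu f$ the same comparison shows $\hmp(f)=\hmp(a_0)$ with $\deg(a_0)<\deg(\phi)$; invertibility then follows from the irreducibility of $\phi$ in $\kx$, since one constructs $b_0\in\kx$ with $a_0b_0\equiv 1\pmod{\phi}$ and checks that $\hmp(a_0)\hmp(b_0)=1$.

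Part (b) follows quickly from (a). The Newton polygons coincide because each coefficient $a_s$ satisfies $\deg(a_s)<\deg(\phi)$, hence $\phi\nmid_\mu a_s$ by $\mu$-minimality of $\phi$, so part (a) gives $\mu(a_s)=\mu'(a_s)$. The $\mu'$-minimality of $\phi$ is the converse direction of Lemma \ref{minimal0} applied to the defining formula of $\mu'$. The minimality of $\deg(\phi)$ in $\kp(\mu')$ follows because any $h$ of smaller degree has $\phi\nmid_\mu h$, so by part (a) $\hmp(h)$ is a unit in $\ggmp$ and $h$ cannot be $\mu'$-irreducible. The $\mu'$-irreducibility of $\phi$ itself is inherited from the $\mu$-irreducibility by a product-decomposition argument on $\phi$-expansions combined with the non-cancellation step already used for multiplicativity.

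Parts (c) and (d) then come out naturally. For (c), $\g_{\mu'}$ is by construction generated by values of the form $\mu(a)+s\ga$ with $\deg(a)<\deg(\phi)$, i.e.\ by $\g_{\mu,\deg(\phi)}$ together with $\ga$. For (d), the kernel of $\ggm\to\ggmp$ in each degree consists of $\hm(g)$ with $\mu(g)<\mu'(g)$, which by part (a) is exactly the condition $\phi\mid_\mu g$; intersecting with the degree-zero part gives $\rr(\phi)=\hm(\phi)\ggm\cap\Delta_\mu$. For the image, each nonzero element is a unit of $\Delta_{\mu'}$ by part (a), hence lies in $\kappa(\mu')$; surjectivity onto $\kappa(\mu')$ follows by combining Proposition \ref{maxideal} with the induced injection $\Delta_\mu/\rr(\phi)\hookrightarrow\Delta_{\mu'}$ and the identification $\Delta_\mu/\rr(\phi)\simeq k_{\mu,\phi}$.
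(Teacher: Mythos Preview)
The paper does not actually supply a proof of this proposition; it is stated as a background result extracted from the references \cite{CompRP} and \cite{Vaq} (see the opening line of Section~\ref{secIndVals}). So there is no ``paper's own proof'' to compare against, and your outline follows the standard route taken in those sources.

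Your sketch is essentially correct, but two steps deserve more care. First, the Bezout check for the unit assertion is not as immediate as you suggest. From $a_0b_0\equiv 1\pmod{\phi}$ you get $a_0b_0=1-c\phi$, and you need $\mu'(c\phi)>0$ to conclude $H_{\mu'}(a_0b_0)=1$. This does hold, but only after you observe that $\phi\nmid_\mu a_0$ and $\phi\nmid_\mu b_0$ (by $\mu$-minimality and degree), hence $\phi\nmid_\mu(a_0b_0)$ by the $\mu$-irreducibility of $\phi$; Lemma~\ref{length} then forces the $\mu(\phi)$-minimum of the $\phi$-expansion of $a_0b_0$ to occur at $s=0$, giving $\mu(c_j)\ge -(j{+}1)\mu(\phi)$ for the $\phi$-coefficients $c_j$ of $c$, whence $\mu(c_j)+(j{+}1)\ga\ge(j{+}1)(\ga-\mu(\phi))>0$. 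Without this step the ``check'' is a genuine gap.

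Second, in part (d) you have shown that the image lands in $\kappa(\mu')$, but to get equality you should note that $v_{\mu',\phi}=v_{\mu,\phi}$ (both send $f+\phi K[x]$ to $\mu(a_0)=\mu'(a_0)$), so $k_{\mu',\phi}=k_{\mu,\phi}$; then Proposition~\ref{maxideal} applied to $\mu'$ (where $\phi$ has minimal degree by part (b)) gives $\kappa(\mu')\simeq k_{\mu',\phi}$, and the induced injection $\Delta_\mu/\rr(\phi)\hookrightarrow\kappa(\mu')$ becomes an isomorphism between two copies of $k_{\mu,\phi}$ once you verify compatibility of the maps.
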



\begin{definition}\label{defproper}
A key polynomial $\phi$ for $\mu$ is said to be \emph{proper} if there exists some $\phm\in\kpm$ of minimal degree such that $\phi\not\smu \phm$. 
\end{definition}

\begin{lemma}\label{groupchain0}
Let $\phi$ be a proper key polynomial for $\mu$. Then,
\begin{enumerate}
\item  $\g_{v_{\mu,\phi}}=\g_{\mu,\deg(\phi)}=\gm$. 
\item All augmentations $\mu'=[\mu;\phi,\ga]$ have $\g_{\mu'}=\gen{\gm,\ga}\supset \gm$.
\end{enumerate}
\end{lemma}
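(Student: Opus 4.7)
The plan is to derive part (1) directly from Proposition \ref{vphi} and Corollary \ref{efphi}, and then to obtain part (2) as an immediate consequence of part (1) together with Proposition \ref{extension}(c).

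Let $m=\deg(\phi)$. By Proposition \ref{vphi}, the valuation $v_{\mu,\phi}$ on $K_\phi$ has value group $\g_{v_{\mu,\phi}}=\g_{\mu,m}$, so the first equality of (1) is immediate. For the second equality, note that $\g\subseteq\g_{\mu,m}\subseteq\gm$ (the first inclusion comes from taking degree-zero polynomials $a\in K^*$ in the definition of $\g_{\mu,m}$; the second from the fact that every $\mu(a)$ belongs to $\gm$). Thus we need only compare the finite indices $[\g_{\mu,m}\colon\g]$ and $[\gm\colon\g]=e(\mu)$.

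Now invoke the hypothesis that $\phi$ is proper: $\phi\not\smu\phm$ for some key polynomial of minimal degree $\phm$. By Corollary \ref{efphi}, this forces
\[
e(v_{\mu,\phi}/v)=e(\mu).
\]
Since $e(v_{\mu,\phi}/v)=[\g_{v_{\mu,\phi}}\colon\g]=[\g_{\mu,m}\colon\g]$, we conclude $[\g_{\mu,m}\colon\g]=[\gm\colon\g]$. Combined with $\g_{\mu,m}\subseteq\gm$ and the finiteness of $e(\mu)$, this yields $\g_{\mu,m}=\gm$, completing (1).

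For (2), apply Proposition \ref{extension}(c) to the augmentation $\mu'=[\mu;\phi,\ga]$:
\[
\g_{\mu'}=\gen{\g_{\mu,\deg(\phi)},\,\ga}=\gen{\g_{\mu,m},\,\ga}.
\]
Substituting the equality $\g_{\mu,m}=\gm$ established in (1) gives $\g_{\mu'}=\gen{\gm,\ga}$, and the inclusion $\gen{\gm,\ga}\supseteq\gm$ is trivial. The only substantive point in the whole argument is the appeal to Corollary \ref{efphi}; once one recognizes that properness is exactly the hypothesis under which that corollary gives $e(v_{\mu,\phi}/v)=e(\mu)$ rather than $e(\mu)/\erel(\mu)$, there is no further obstacle.
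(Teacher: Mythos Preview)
Your proof is correct. The paper states Lemma~\ref{groupchain0} without proof (section~2 opens by noting that its results are extracted from \cite{CompRP} and \cite{Vaq}), so there is no argument in the paper to compare against; your reconstruction from Proposition~\ref{vphi}, Corollary~\ref{efphi}, and Proposition~\ref{extension}(c) is exactly the natural one.

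One small point worth making explicit: Corollary~\ref{efphi} is stated for $\mu\in\Vkp$, and you also need $e(\mu)<\infty$ for the index comparison. Both are guaranteed by the mere existence of a proper key polynomial, since an incommensurable extension has a single $\mu$-equivalence class of key polynomials (hence only improper ones); this is remarked later in section~3.1, but it is independent of the lemma and could be cited here. With that caveat, the argument is complete.
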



\subsection{The minimal extension of $v$ to $\kx$}\label{subsecDepth0}
Consider the ordered group $\left(\Z\times \qg\right)_{\op{lex}}$ with the lexicographical order, and the following valuation on $\kx$:
$$
\minf\colon \kx\lra \left(\Z\times \g\right)_{\op{lex}}\cup\{\infty\},\qquad 
f\longmapsto \left(-\deg(f),v\left(\lc(f)\right)\right)
$$
where $\lc(f)\in K^*$ is the leading coefficient of a non-zero polynomial $f$.

Since $\g_{\minf}=\left(\Z\times \g\right)_{\op{lex}}$, the extension $\minf/v$ is incommensurable.

The set of key polynomials of $\minf$ is
$$\op{KP}(\minf)=\left\{x+a\mid a\in K\right\},$$ 
and all these key polynomials are $\minf$-equivalent.

Let us fix an order-preserving embedding of abelian ordered groups:
$$
\qg\hooklongrightarrow \left(\Z\times \qg\right)_{\op{lex}},\qquad \ga\longmapsto (0,\ga).
$$
This  embedding allows the comparation of all $\mu\in\V$ with $\minf$. Clearly,
$$
\minf<\mu,\qquad \forall\,\mu\in\V.
$$

Consider augmentations of $\minf$  of the form:
$$
\mu_0(x+a,\ga):=[\minf;\,x+a,\,(0,\ga)],\qquad a\in K,\quad\ga\in\qg.
$$


By dropping the first (null) coordinate of all values of these valuations, we obtain equivalent valuations with values in $\qg$. We denote these valuations, which now belong to the space $\V$, with the same symbol $\mu_0(x+a,\ga)$.
They act as follows:
$$
\mu_0(x+a,\ga)\colon \ \sum\nolimits_{0\le s}a_s(x+a)^s\ \longmapsto\ \mn\left\{v(a_s)+s\ga\mid0\le s\right\},
$$
and their value group is $\g_{\mu_0(x+a,\ga)}=\gen{\g,\,\ga}$.


\subsection{MacLane chains}\label{subsecMLChains}

A valuation $\mu$ on $\kx$ is said to be \emph{inductive} if it is attained after a finite number of augmentation steps starting with the minimal valuation:
\begin{equation}\label{depth}
\minf\stackrel{\phi_0,\ga_0}\lra\  \mu_0\ \stackrel{\phi_1,\ga_1}\lra\  \mu_1\ \stackrel{\phi_2,\ga_2}\lra\ \cdots
\ \stackrel{\phi_{r-1},\ga_{r-1}}\lra\ \mu_{r-1} 
\ \stackrel{\phi_{r},\ga_{r}}\lra\ \mu_{r}=\mu,
\end{equation}
with values $\ga_0,\dots,\ga_r\in\qg$, and intermediate valuations  
$$\mu_0=\mu_0(\phi_0,\ga_0), \qquad\mu_i=[\mu_{i-1};\phi_i,\ga_i], \quad 1\leq i\leq r.
$$

We do not consider $\minf$ to be an inductive valuation; thus, inductive valuations belong to the space $\Vkp$. 
The integer $r\ge0$ is the \emph{length} of the chain (\ref{depth}).  


By Proposition \ref{extension}, every $\phi_{i}$ is a key polynomial for $\mu_i$ of minimal degree. 
Hence, Theorem \ref{bound} shows that the sequence of weights $w(\mu_i)\in\qg$ grows strictly: 
$$
 w(\mu_{i})= \mu_{i}(\phi_i)/\deg(\phi_i)> \mu_{i-1}(\phi_{i})/\deg(\phi_{i})=w(\mu_{i-1}).
$$

Since $w(\mu_i)=\ga_i/\deg(\phi_i)$, the sequence $\;\ga_0<\dots<\ga_r\;$ grows strictly too.

\begin{lemma}\label{stable}
For a chain of augmentations as in {\rm (\ref{depth})}, consider $f\in \kx$ such that $\phi_i\nmid_{\mu_{i-1}}f$ for some $1\le i\le r$. Then, 
$\mu_{i-1}(f)=\mu_{i}(f)=\cdots=\mu_r(f)$. 
\end{lemma}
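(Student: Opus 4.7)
The plan is to proceed by a short induction on the index $j$, starting from $j=i-1$. The base case and the inductive step both reduce to applying Proposition \ref{extension}(a) together with the key observation that a $\mu_j$-unit cannot be $\mu_j$-divisible by any key polynomial for $\mu_j$.

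For the base case, I would feed the hypothesis $\phi_i\nmid_{\mu_{i-1}}f$ into Proposition \ref{extension}(a) applied to the augmentation $\mu_i=[\mu_{i-1};\phi_i,\ga_i]$. This yields both $\mu_{i-1}(f)=\mu_i(f)$ and the bonus fact that $H_{\mu_i}(f)$ is a unit in $\gg_{\mu_i}$. This latter information is what will drive the induction forward.

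For the inductive step, suppose I have established that $\mu_{i-1}(f)=\cdots=\mu_j(f)$ and that $H_{\mu_j}(f)$ is a unit in $\gg_{\mu_j}$, for some $i\le j<r$. The next key polynomial $\phi_{j+1}$ is a key polynomial for $\mu_j$, so by definition $H_{\mu_j}(\phi_{j+1})\gg_{\mu_j}$ is a non-zero prime ideal of $\gg_{\mu_j}$; in particular it contains no unit, so $H_{\mu_j}(\phi_{j+1})\nmid H_{\mu_j}(f)$, that is, $\phi_{j+1}\nmid_{\mu_j}f$. A second application of Proposition \ref{extension}(a), now to $\mu_{j+1}=[\mu_j;\phi_{j+1},\ga_{j+1}]$, then delivers $\mu_j(f)=\mu_{j+1}(f)$ together with $H_{\mu_{j+1}}(f)\in\gg_{\mu_{j+1}}^*$, closing the induction.

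Chaining these equalities gives $\mu_{i-1}(f)=\mu_i(f)=\cdots=\mu_r(f)$, as required. There is no real obstacle here: the whole argument runs on the two facts packaged in Proposition \ref{extension}(a) (stability of values under non-divisibility, and the resulting unit in the graded algebra) and on the definitional property that key polynomials produce prime, hence non-unit, homogeneous classes.
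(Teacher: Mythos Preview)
Your argument is correct. The induction is well-founded: Proposition \ref{extension}(a) gives both the equality $\mu_{j}(f)=\mu_{j+1}(f)$ and the unit property of $H_{\mu_{j+1}}(f)$ whenever $\phi_{j+1}\nmid_{\mu_j}f$, and the observation that a unit in $\gg_{\mu_j}$ cannot be divisible by $H_{\mu_j}(\phi_{j+1})$ (whose principal ideal is prime, hence proper) is exactly what propagates the non-divisibility hypothesis up the chain.

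The paper itself does not supply a proof of this lemma; it is recorded among the background facts drawn from \cite{CompRP} and \cite{Vaq}. Your proof is the standard one and matches what those references do: iterate Proposition \ref{extension}(a) along the chain, using the unit conclusion at each step to guarantee non-divisibility at the next. There is no alternative route worth mentioning here.
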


Since every $\phi_{i+1}$ is $\mu_i$-minimal, \cite[Prop. 3.7]{KeyPol} shows that
$$
1=\deg(\phi_0)\mid \deg(\phi_1)\mid\cdots\mid\deg(\phi_{r-1})\mid\deg(\phi_r).
$$

\begin{definition}
A \emph{MacLane chain} is a chain of augmentations (\ref{depth}) such that
$$
\phi_i\nmid_{\mu_{i-1}}\phi_{i-1},\qquad 0<i\le r.
$$
In particular, $\phi_i$ is a proper key polynomial for $\mu_{i-1}$ (cf. Definition \ref{defproper}).

An \emph{optimal MacLane chain} is a chain of augmentations (\ref{depth}) with  $$1=\deg(\phi_0)<\deg(\phi_1)<\cdots<\deg(\phi_r).$$
\end{definition}

Obviously, the truncation of a MacLane chain at the $i$-th node 
is a MacLane chain of the intermediate valuation $\mu_i$. 
By Lemma \ref{stable}, in a MacLane chain,
$$
\mu(\phi_i)=\mu_i(\phi_i)=\ga_i,\qquad 0\le i\le r.
$$

All inductive valuations admit optimal MacLane chains.
For these chains, there is a strong unicity statement.

\begin{proposition}\label{unicity}
Consider an optimal MacLane chain as in {\rm (\ref{depth})} and another optimal MacLane chain
$$
\minf\ \stackrel{\phi^*_0,\ga^*_0}\lra\  \mu^*_0\ \stackrel{\phi^*_1,\ga^*_1}\lra\ \cdots
\ \lra\ \mu^*_{t-1} 
\ \stackrel{\phi^*_{t},\ga^*_{t}}\lra\ \mu^*_{t}=\mu^*.
$$
Then, $\mu=\mu^*$ if and only if $r=t$ and
$$
\deg(\phi_i)=\deg(\phi^*_i), \quad \mu_{i-1}(\phi_i-\phi^*_i)\ge\ga_i=\ga^*_i \ \mbox{ for all }\ 0\le i \le r.
$$
In this case, we also have $\mu_i=\mu^*_i$ for all $0\le i \le r$. 
\end{proposition}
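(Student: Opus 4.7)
The plan is to handle the two implications separately, with the ``only if'' direction carrying most of the weight. For the ``if'' direction, I would induct on $i$, showing $\mu_i = \mu_i^*$ under the listed compatibility conditions up to $i$. The inductive step reduces to the following local claim: given $\nu = \mu_{i-1} = \mu_{i-1}^*$ and two key polynomials $\phi,\phi^*\in\op{KP}(\nu)$ of the same degree with $\nu(\phi - \phi^*) \ge \gamma$, one has $[\nu;\phi,\gamma] = [\nu;\phi^*,\gamma]$. I would verify this by converting the $\phi$-expansion of any $f \in \kx$ into its $\phi^*$-expansion via $\phi^s = (\phi^* + (\phi - \phi^*))^s$: the closeness hypothesis propagates to show that the new coefficients $a_s^*$ satisfy $\nu(a_s^*) + s\gamma \ge \min_j (\nu(a_j) + j\gamma)$, so the two minima coincide.

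For the ``only if'' direction, I would induct on $r$, assuming without loss of generality $r \le t$. First, Proposition~\ref{extension}(b) identifies both $\phi_r$ and $\phi_t^*$ as key polynomials for $\mu$ of minimal degree, forcing $\deg(\phi_r) = \deg(\phi_t^*) =: m$; combined with the strict degree growth in an optimal chain, this already rules out $r < t$ by comparing minimal key polynomial degrees of the intermediate valuations. Second, Theorem~\ref{bound} gives $\gamma_r/m = w(\mu) = \gamma_t^*/m$, so $\gamma_r = \gamma_t^*$.

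The main obstacle is the equality $\mu_{r-1} = \mu_{t-1}^*$. Both valuations agree with $\mu$ on polynomials of degree $< m$ by Lemma~\ref{stable} (applied with $\phi_r$, respectively $\phi_t^*$, as the non-dividing key polynomial). By the $\mu_{r-1}$-minimality of $\phi_r$ and Lemma~\ref{minimal0}, $\mu_{r-1}$ is then completely determined by the single datum $\mu_{r-1}(\phi_r)$, and symmetrically for $\mu_{t-1}^*$. One can compute $\mu_{r-1}(\phi_t^*) = \mu_{r-1}(\phi_r)$ via the ultrametric inequality applied to $\phi_t^* = \phi_r - b_0$ with $\deg(b_0) < m$ and $\mu_{r-1}(b_0) = \mu(b_0) \ge \gamma_r > \mu_{r-1}(\phi_r)$, and symmetrically $\mu_{t-1}^*(\phi_r) = \mu_{t-1}^*(\phi_t^*)$. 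The delicate step is to pin this common value down intrinsically in terms of $\mu$; I would exploit that $\mu_{r-1}(\phi_r) \in \g_{\mu_{r-1}} = \g_{\mu_{r-1}, m}$ equals $\mu(a)$ for some $a \in \kx$ of degree $< m$, and identify the correct $a$ via the Newton polygon of $\phi_r$ at the previous level, or alternatively via the maximality property that $\mu_{r-1}$ is the largest valuation $\nu \le \mu$ agreeing with $\mu$ on polynomials of degree $< m$ for which $\mu = [\nu;\phi_r,\gamma_r]$ is a valid augmentation.

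Once $\mu_{r-1} = \mu_{t-1}^*$ is established, the inductive hypothesis applied to the two truncated optimal chains gives $r - 1 = t - 1$ and the matching conditions for indices $\le r - 1$. The remaining condition $\mu_{r-1}(\phi_r - \phi_r^*) \ge \gamma_r$ then follows from Lemma~\ref{stable} applied to $\phi_r - \phi_r^*$ (of degree $< m$) together with the ultrametric inequality $\mu(\phi_r - \phi_r^*) \ge \min(\mu(\phi_r), \mu(\phi_r^*)) = \gamma_r$.
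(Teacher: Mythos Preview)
The paper does not actually prove Proposition~\ref{unicity}; it is quoted without proof from the references \cite{CompRP} and \cite{Vaq}. So there is no ``paper's own proof'' to compare against, and I can only evaluate your argument on its merits.

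Your ``if'' direction is sound, and the overall inductive architecture of the ``only if'' direction is correct. The genuine gap is exactly the one you flag as ``the delicate step'': showing $\mu_{r-1}=\mu_{t-1}^*$. Your proposed maximality characterization is false as stated. Take any $\delta'$ with $\mu_{r-1}(\phi_r)<\delta'<\ga_r$ and set $\nu=[\mu_{r-1};\phi_r,\delta']$. Then $\nu$ agrees with $\mu$ on polynomials of degree $<m$, has $\phi_r$ as a key polynomial with $\nu(\phi_r)=\delta'<\ga_r$, and satisfies $\mu=[\nu;\phi_r,\ga_r]$; yet $\nu>\mu_{r-1}$. So $\mu_{r-1}$ is not the largest such $\nu$, and your pinning-down argument collapses.

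What does single out $\mu_{r-1}$ among all these $\nu$ is the \emph{optimality} of the chain: $\mu_{r-1}$ is the unique $\nu<\mu$ with $\mu=[\nu;\phi_r,\ga_r]$ for which $\phi_r$ is a \emph{strong} key polynomial, i.e.\ the minimal degree in $\op{KP}(\nu)$ is strictly less than $m$. Uniqueness follows because if $\nu<\nu'$ both satisfied this, then (using that they agree on degree $<m$, that $\phi_r\in\op{KP}(\nu')$ is $\nu'$-minimal, and Lemma~\ref{minimal0}) one gets $\nu'=[\nu;\phi_r,\nu'(\phi_r)]$, whence by Proposition~\ref{extension}(b) $\phi_r$ has \emph{minimal} degree in $\op{KP}(\nu')$, contradicting strongness. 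Since the starred chain is also optimal, $\mu_{t-1}^*$ satisfies the same characterization (you already checked $\phi_r\sim_{\mu_{t-1}^*}\phi_t^*$, so $\phi_r\in\op{KP}(\mu_{t-1}^*)$ and $\mu=[\mu_{t-1}^*;\phi_r,\ga_r]$), and equality follows. Your aside that comparing minimal key-polynomial degrees ``already rules out $r<t$'' is not justified on its own; you need the full induction via $\mu_{r-1}=\mu_{t-1}^*$ to conclude $r=t$.
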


Therefore, in any optimal MacLane chain of an inductive valuation $\mu$ as in (\ref{depth}), the intermediate valuations $\mu_0,\dots,\mu_{r-1}$, the \emph{slopes} $\ga_0,\dots,\ga_r\in\qg$ and the positive integers $\deg(\phi_0),\dots,\deg(\phi_r)$, are intrinsic data of $\mu$.

\begin{definition}
The \emph{MacLane depth} of an inductive valuation $\mu$ is the length $r$ of any optimal MacLane chain of $\mu$.  
\end{definition}

A key polynomial $\phi$ for an inductive valuation $\mu$ is \emph{defectless} \cite{Vaq2},  \cite[Cor. 5.14]{CompRP}:
$$
\deg(\phi)=e(v_{\mu,\phi}/v)f(v_{\mu,\phi}/v).
$$


In particular, the valuation $v_{\mu,\phi}$ is the unique extension of $v$ to the field $K_\phi$.   
Therefore, whenever we deal with a key polynomial $\phi$ for an inductive valuation $\mu$, we shall use the following simplified notation:
$$
v_\phi=v_{\mu,\phi},\qquad k_\phi=k_{\mu,\phi},\qquad e(\phi)=e(v_\phi/v),\qquad f(\phi)=f(v_\phi/v).
$$

Take an optimal MacLane chain of $\mu$ as in {\rm (\ref{depth})}, and denote $\mu_{-1}=v$, $\kappa_{-1}=k$.

By Lemma \ref{groupchain0}, the value groups of the intermediate valuations form a chain:
$$\g_{v_{\phi_i}}=\g_{\mu_{i-1},\deg(\phi_i)}=\g_{\mu_{i-1}}\subset \g_{\mu_{i}}=\gen{\g_{\mu_{i-1}},\ga_i}, \quad 0\le i\le r.$$ 
Thus, every quotient $\g_{\mu_i}/\g_{\mu_{i-1}}$  
  is a finite cyclic group generated by $\ga_i$. 

The relative ramification indices are intrinsic invariants of $\mu$: 
$$
e_i:=\left(\g_{\mu_i}\colon \g_{\mu_{i-1}}\right)=\erel(\mu_i),\qquad 0\le i\le r.
$$

Also, the canonical homomorphisms $\gg_{\mu_{i-1}}\lra\gg_{\mu_i}$ induce a tower of fields
$$
 \kappa_0=\op{Im}( k\to\Delta_{\mu_0})\lra \cdots \lra  \kappa_i=\op{Im}(\Delta_{\mu_{i-1}}\to\Delta_{\mu_i})\lra \cdots \lra \kappa_r.
$$
By Propositions \ref{extension} and \ref{maxideal}, $ \kappa_i=\kappa(\mu_i)\simeq k_{\phi_i}$ for all $0\le i\le r$.

Let $R_i=R_{\mu_i,\phi_i,\ep_i}$ be the residual polynomial operators, determined by adequate units $\ep_i\in\gg_{\mu_i}^*$ of degree $-e_i\mu_i(\phi_i)$.

The relative residual degrees are intrinsic invariants of $\mu$ too: 
$$
f_{i-1}:=[\kappa_{i}\colon  \kappa_{i-1}]=\deg(R_{i-1}(\phi_i)),\qquad 0<i\le r,
$$ 
the last equality by Corollary \ref{efphi}.

Corollary \ref{efphi} provides as well a computation of the ramification index and residual degree of any $\phi\in\kpm$ such that $\phi\not\smu\phi_r$:
$$
e(\phi)=e(\mu)=e_0\cdots e_{r},\qquad f(\phi)=f(\mu)\deg(R_r(\phi))=f_0\cdots f_{r-1}\deg(R_r(\phi)).
$$

\section{Proper key polynomials and types}\label{secProperTypes}

\subsection{Proper key polynomials}\label{subsecProper}

Let $\mu$ be an extension of $v$ to $\kx$ with $\kpm\ne\emptyset$.

A key polynomial $\phi$ for $\mu$ is proper if there exists a key polynomial for $\mu$, of minimal degree, which is not $\mu$-equivalent to $\phi$ (Definition \ref{defproper}). 

By Corollary \ref{samefiber}, properness is a property of $\mu$-equivalence classes of key polynomials for $\mu$.

If $\mu/v$ is an incommensurable extension, then there is only one $\mu$-equivalence class of key polynomials \cite[Thm. 5.3]{KeyPol}. Hence, this single class is improper.  \e

In this section, we study properness of key polynomials for valuations $\mu\in\Vkp$. 

By Theorem \ref{Max}, the set $\kpm/\!\smu$ is in canonical bijection with  $\mx(\Delta)$.  Hence, it makes sense to talk about proper maximal ideals in $\Delta$.

Let us fix some notation.

\begin{itemize}
\item $\phm$ \ key polynomial for $\mu$ of minimal degree $m$ in the set $\kpm$.
\item $e=\erel(\mu)=\left(\gm\colon\g_{\mu,m}\right)$ \ relative ramification index of $\mu$.
\item $R=R_{\mu,\phm,\ep}$ \ residual polynomial operator, for $\ep\in\ggm^*$  of degree $-e\mu(\phm)$.
\item $\xi=\hm(\phm)^e\ep\in\Delta$.
\item $[\phi]=[\phi]_\mu$ \ $\mu$-equivalence class of $\phi$ in the set $\kpm$.
\item $s_{\mu,\phi}(f)$ \ order with which $\hm(\phi)$ divides $\hm(f)$ in $\ggm$, for all $f\in\kx$.
\item $s(f)=s_{\mu,\phm}(f)$ \ notation coherent with that of (\ref{sf}), by Lemma \ref{length}.
\end{itemize}

Theorem \ref{charKP} shows that all key polynomials for $\mu$ have degree multiple of $m$:
\begin{equation}\label{degphi}
\deg(\phi)=\begin{cases}m, &\mbox{ if }\;\phi\in[\phm],\\
em\deg\left(R(\phi)\right),&\mbox{ if }\;\phi\not \in[\phm].
\end{cases}
\end{equation}

Let us show that properness of a key polynomial is determined by its degree.

\begin{lemma}\label{CritProp}
A key polynomial $\phi\in\kpm$ is proper if and only if \ $em\mid \deg(\phi)$.
\end{lemma}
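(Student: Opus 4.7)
The plan is to split both directions into two cases according to whether $\phi \smu \phm$ or not, and then apply the degree formula (\ref{degphi}) together with Theorem \ref{charKP}.

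For the forward implication, I would assume $\phi$ is proper. If $\phi \not\smu \phm$, then (\ref{degphi}) immediately yields $\deg(\phi)=em\deg(R(\phi))$, so $em \mid \deg(\phi)$. Otherwise $\phi \smu \phm$, giving $\deg(\phi)=m$; properness supplies a minimal-degree key polynomial $\phi^{*}$ with $\phi^{*} \not\smu \phi$, hence $\phi^{*} \not\smu \phm$ by transitivity. Then $\phi^{*}\not\in[\phm]$, and (\ref{degphi}) applied to $\phi^{*}$ gives $m=\deg(\phi^{*})=em\deg(R(\phi^{*}))\ge em$, forcing $e=1$. Thus $em=m\mid m=\deg(\phi)$.

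For the reverse implication, assume $em \mid \deg(\phi)$. If $\phi \not\smu \phm$, then $\phm$ itself is a minimal-degree key polynomial not $\mu$-equivalent to $\phi$, so $\phi$ is proper. The substantive case is $\phi \smu \phm$, where $\deg(\phi)=m$ and $em \mid m$ force $e=1$. Here I need to construct a minimal-degree key polynomial $\phi^{*}$ with $\phi^{*} \not\smu \phm$. Since $e=1$ means $\mu(\phm)\in\g_{\mu,m}$, I can pick $a\in\kx$ with $\deg(a)<m$ and $\mu(a)=\mu(\phm)$, and set $\phi^{*}:=\phm+a$. Its $\phm$-expansion has $a_0=a$, $a_1=1$, so $N_{\mu,\phm}(\phi^{*})$ is the single segment from $(0,\mu(\phm))$ to $(1,0)$, one-sided of slope $-\mu(\phm)$, with side-degree $d=1/e=1$. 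A direct computation of residual coefficients yields $R(\phi^{*})=y+\zeta_0$ with $\zeta_0=\hm(a)\ep\in\kappa^{*}$ nonzero, which is linear and hence irreducible in $\kappa[y]$. Theorem \ref{charKP}(2) then certifies $\phi^{*}$ as a key polynomial of degree $m$, with $\rr(\phi^{*})=(\xi+\zeta_0)\Delta\ne\xi\Delta=\rr(\phm)$; Corollary \ref{samefiber} concludes $\phi^{*}\not\smu\phm$, as required.

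The main obstacle is precisely this explicit construction of $\phi^{*}$ when $e=1$ and $\phi\smu\phm$; the rest amounts to bookkeeping with (\ref{degphi}) and the characterization of key polynomials. The hypothesis $e=1$ is indispensable: it is what both guarantees the existence of $a\in\kx$ with $\deg(a)<m$ and $\mu(a)=\mu(\phm)$, and forces the side-degree of $N_{\mu,\phm}(\phm+a)$ to be an integer, so that $R(\phm+a)$ is an honest linear polynomial distinct from the exceptional case $y$ prohibited by (\ref{basicR}).
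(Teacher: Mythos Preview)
Your proof is correct and follows essentially the same route as the paper's: the forward direction via the degree formula (\ref{degphi}), and the backward direction by constructing $\phi^{*}=\phm+a$ with $\mu(a)=\mu(\phm)$ when $e=1$, then invoking Theorem~\ref{charKP} to see that $R(\phi^{*})=y+\hm(a)\ep\ne 1$ and hence $\phi^{*}\not\smu\phm$. The only cosmetic difference is your case split: you divide according to whether $\phi\smu\phm$, while the paper divides according to whether $e>1$; these two organizations are equivalent since $\phi\smu\phm$ together with $em\mid\deg(\phi)=m$ forces $e=1$, and conversely $e>1$ forces $\phi\not\in[\phm]$ by (\ref{degphi}).
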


\begin{proof}
If $\phi$ is proper,  then $em\mid \deg(\phi)$ by (\ref{degphi}).

Conversely, suppose that  $em\mid \deg(\phi)$.
If $e>1$, then (\ref{degphi}) shows that $\phi\not\in[\phm]$.   

If $e=1$, then $\gm=\g_{\mu,m}$, and there exists  $a\in\kx$ with $\deg(a)<m$ such that $\mu(a)=\mu(\phm)$. 
The polynomial $\phi'=\phm+a$ has $R(\phi')=y+\hm(a)\ep\in\kappa[y]$.

 By Theorem \ref{charKP}, $\phi'$ is another key polynomial of minimal degree, and $[\phi']\ne[\phm]$. Hence, either $\phi\not\in[\phi']$ or $\phi\not\in[\phm]$.
\end{proof}

The following result is an immediate consequence of (\ref{degphi}) and Lemma \ref{CritProp}.

\begin{corollary}\label{improper}\mbox{\null}
\begin{enumerate}
\item If $e=1$, all key polynomials for $\mu$ are proper.
\item If $e>1$, then $[\phm]$ is the only improper class. 

This class coincides with the set of all key polynomials of degree $m$. 
\end{enumerate}
\end{corollary}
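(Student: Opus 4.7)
The plan is to derive both parts directly from the criterion $em\mid\deg(\phi)$ in Lemma \ref{CritProp}, using the dichotomy of degrees in (\ref{degphi}) as the only input about what degrees key polynomials can actually realize. Since there is no real geometry to uncover here, the work is purely a case split on whether $e=1$ or $e>1$ and on whether $\phi\in[\phm]$ or not.

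For part (1), I would argue as follows. If $e=1$, then $em=m$, so formula (\ref{degphi}) gives $\deg(\phi)\in\{m,\,m\deg(R(\phi))\}$, both of which are multiples of $m=em$. Lemma \ref{CritProp} then hands us properness for every $\phi\in\kpm$.

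For part (2), assume $e>1$. If $\phi\in[\phm]$, then $\deg(\phi)=m$ by the first branch of (\ref{degphi}); since $e>1$, the integer $em$ does not divide $m$, so $\phi$ is improper by Lemma \ref{CritProp}. If instead $\phi\notin[\phm]$, the second branch of (\ref{degphi}) gives $\deg(\phi)=em\deg(R(\phi))$, which is trivially divisible by $em$, so $\phi$ is proper. This shows $[\phm]$ is the only improper class. To identify this class with the set of degree-$m$ key polynomials, observe that any key polynomial not in $[\phm]$ has degree at least $em>m$ by (\ref{degphi}); hence every key polynomial of degree exactly $m$ must lie in $[\phm]$, and conversely every element of $[\phm]$ has degree $m$ by (\ref{degphi}) again.

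There is no genuine obstacle: the content is entirely packaged in Lemma \ref{CritProp} and the degree formula (\ref{degphi}), and the only mild point to be careful about is not conflating the two branches of (\ref{degphi}) when $e=1$ (where both branches are compatible with properness) versus $e>1$ (where the minimal-degree branch is precisely the obstruction to divisibility by $em$). I would write the proof as a one-line deduction for (1) and a two-line case split for (2), with the identification of $[\phm]$ as the full set of degree-$m$ key polynomials appended as a short corollary of (\ref{degphi}).
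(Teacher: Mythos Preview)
Your proposal is correct and is exactly the approach the paper takes: the paper states this corollary as ``an immediate consequence of (\ref{degphi}) and Lemma \ref{CritProp}'' without further elaboration, and your case split simply spells out that deduction.
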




\begin{definition}
We say that $f\in K[x]$ is \emph{$\mu$-proper} if either $f=0$, or $\phi\nmid_\mu f$ for all improper $\phi\in\kpm$.  
\end{definition}

The next result is an immediate consequence of Corollary \ref{improper}.

\begin{corollary}\label{improper2}
If $e=1$, all polynomials are $\mu$-proper.
 
If $e>1$, then $f\in\kx$ is $\mu$-improper if and only if $0<s(f)<\infty$. \end{corollary}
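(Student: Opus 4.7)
The plan is to unfold the definition of $\mu$-properness directly and then substitute the description of improper key polynomials provided by Corollary \ref{improper}. Since the author bills this as ``an immediate consequence'', the strategy is purely to organize the two cases $e=1$ and $e>1$ separately.

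First I would handle the case $e=1$. Corollary \ref{improper}(1) asserts that every key polynomial for $\mu$ is proper, so the set of improper elements of $\kpm$ is empty. The universal condition ``$\phi\nmid_\mu f$ for all improper $\phi\in\kpm$'' appearing in the definition of $\mu$-properness is then vacuously true for every $f\in\kx$. Combined with the convention that $f=0$ is declared $\mu$-proper, this yields the first assertion.

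Next I would treat the case $e>1$. Corollary \ref{improper}(2) identifies the improper key polynomials as exactly the elements of the single class $[\phm]$. If $\phi\smu\phm$, then $\hm(\phi)=\hm(\phm)$, so the divisibility $\phi\mmu f$ holds for some improper $\phi$ precisely when $\phm\mmu f$. Consequently, $f$ is $\mu$-improper iff $f\neq 0$ and $\phm\mmu f$. By the very definition of $s(f)=s_{\mu,\phm}(f)$ as the order with which $\hm(\phm)$ divides $\hm(f)$ in $\ggm$, the condition $\phm\mmu f$ is equivalent to $s(f)\geq 1$, while $f\neq 0$ is encoded by $s(f)<\infty$ under the convention $s(0)=\infty$. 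This yields the desired equivalence: $f$ is $\mu$-improper if and only if $0<s(f)<\infty$.

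The only subtlety I anticipate is handling the boundary case $f=0$: since $\hm(0)=0$ is divisible by every homogeneous element of $\ggm$, one reads off $s(0)=\infty$, which is precisely why the statement uses the finiteness condition $s(f)<\infty$ to exclude the zero polynomial. Beyond this bit of bookkeeping, there is no real obstacle, as the author already signals.
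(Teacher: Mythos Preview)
Your proposal is correct and follows exactly the route the paper intends: the paper gives no explicit proof, stating only that the result is ``an immediate consequence of Corollary~\ref{improper}'', and your argument unpacks precisely that deduction. The only point worth noting is that the convention $s(0)=\infty$ is implicit rather than stated in the paper, but since $\hm(0)=0$ is divisible by every power of $\hm(\phm)$ in $\ggm$, this is the natural reading.
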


\begin{lemma}\label{multproper}
If at least one of the polynomials $g,h\in K[x]$ is $\mu$-proper, then $$\rr(gh)=\rr(g)\rr(h).$$ 
\end{lemma}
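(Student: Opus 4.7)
The plan is to invoke the explicit formula \eqref{RR}, which writes each residual ideal as a principal ideal
\[
\rr(f) = \xi^{\lceil s(f)/e\rceil}\,R(f)(\xi)\,\Delta
\]
in the polynomial ring $\Delta = \kappa[\xi]$ (Theorem \ref{kstructure}), and then to compare the exponents appearing on the two sides of the desired identity. The case $g=0$ or $h=0$ is immediate, so from now on assume $g,h\ne 0$.

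Three multiplicativity facts feed into the computation. First, $R(gh) = R(g)R(h)$ by Corollary \ref{prodR}(1). Second, $s(gh) = s(g)+s(h)$: indeed $\hm(gh)=\hm(g)\hm(h)$ by \eqref{Hmu}, and $\hm(\phm)$ is a prime element of the integral domain $\ggm$ (since $\phm$ is $\mu$-irreducible), so its divisibility order is additive on nonzero elements. Third, the element $P := R(g)(\xi)R(h)(\xi)$ of $\Delta$ has nonzero constant term as a polynomial in $\xi$, since \eqref{basicR} applied to $g$ and to $h$ gives that neither $R(g)$ nor $R(h)$ is divisible by $y$; hence $P$ is coprime to $\xi$ in the polynomial ring $\Delta = \kappa[\xi]$.

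Plugging these into \eqref{RR}, the desired identity reduces to the equality of principal ideals
\[
\xi^{\lceil(s(g)+s(h))/e\rceil}\,P\,\Delta \;=\; \xi^{\lceil s(g)/e\rceil+\lceil s(h)/e\rceil}\,P\,\Delta,
\]
which, by the coprimality of $P$ and $\xi$ and the fact that $\xi$ is a nonunit of $\kappa[\xi]$ (indeed $\rr(\phm)$ is a proper maximal ideal), holds if and only if the two exponents of $\xi$ agree. If $e=1$ the ceilings vanish and the identity is automatic. If $e>1$, the hypothesis that (say) $g$ is $\mu$-proper yields $s(g)=0$ by Corollary \ref{improper2}, and then both exponents equal $\lceil s(h)/e\rceil$.

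The only substantive point is this last one: without a properness hypothesis, $\lceil(s(g)+s(h))/e\rceil$ and $\lceil s(g)/e\rceil+\lceil s(h)/e\rceil$ can differ by $1$, precisely when the residues of $s(g),s(h)\bmod e$ are both nonzero and sum to at most $e$. The $\mu$-properness assumption eliminates exactly this obstruction by forcing one residue to vanish, which is the structural reason the hypothesis is needed.
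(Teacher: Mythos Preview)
Your proof is correct and follows essentially the same approach as the paper's own proof: both reduce the identity to the equality of ceiling exponents $\lceil(s(g)+s(h))/e\rceil=\lceil s(g)/e\rceil+\lceil s(h)/e\rceil$ via \eqref{RR}, Corollary \ref{prodR}, and the additivity of $s(\cdot)$, and then invoke Corollary \ref{improper2} when $e>1$. Your version is slightly more explicit in justifying why equality of the principal ideals forces equality of the $\xi$-exponents (via coprimality of $P$ and $\xi$), but the argument is the same.
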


\begin{proof}
By Theorem \ref{kstructure} and (\ref{RR}), $\rr(gh)=\rr(g)\rr(h)$ is equivalent to: $$
y^{\lceil s(gh)/e\rceil}R(gh)=
y^{\lceil s(g)/e\rceil}R(g)
y^{\lceil s(h)/e\rceil}R(h).
$$
Since $s(gh)=s(g)+s(h)$ and  $R(gh)=R(g)R(h)$ (Corollary \ref{prodR}), this amounts to 
$$
\lceil (s(g)+s(h))/e\rceil=
\lceil s(g)/e\rceil+\lceil s(h)/e\rceil.
$$
If $e=1$ this equality is obvious. If $e>1$ it holds too, because either $s(g)=0$ or  $s(h)=0$, by Corollary \ref{improper2}. 
\end{proof}

\begin{proposition}\label{nextlength}
Let $\phi\in\kpm$ and $\ll=\rr(\phi)\in\mx(\Delta)$. For any $f\in K[x]$:
$$
\ord_{\ll}(\rr(f))=
\begin{cases}
s_{\mu,\phi}(f),&\quad\mbox{ if $\ll$ is proper},\\
\lceil s_{\mu,\phi}(f)/e\rceil,&\quad\mbox{ if $\ll$ is improper}. 
\end{cases}
$$
where $\ord_\ll(\rr(f))$ is the largest non-negative integer $n$ such that $\ll^n\mid \rr(f)$ in $\Delta$.
\end{proposition}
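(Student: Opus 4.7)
The plan is to reduce everything to a computation in the polynomial ring $\Delta=\kappa[\xi]$ via formula (\ref{RR}), $\rr(f)=\xi^{\lceil s(f)/e\rceil}R(f)(\xi)\Delta$, combined with the description of $\ll$ furnished by Theorem \ref{charKP}. Property (\ref{basicR}) gives $y\nmid R(f)$, so $R(f)(\xi)$ and $\xi$ are coprime in $\Delta$; hence the primary decomposition of $\rr(f)$ in $\Delta$ is transparent, with the $\xi\Delta$-component of multiplicity $\lceil s(f)/e\rceil$ and, for any irreducible $P(y)\neq y$, the $P(\xi)\Delta$-component of multiplicity equal to that of $P$ in $R(f)$.

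I would first dispose of the case $\phi\smu\phm$. Corollary \ref{samefiber} together with Theorem \ref{charKP}(1) give $\ll=\rr(\phi)=\rr(\phm)=\xi\Delta$, and $\hm(\phi)=\hm(\phm)$ forces $s_{\mu,\phi}(f)=s(f)$, whence $\ord_\ll(\rr(f))=\lceil s(f)/e\rceil$. By Corollary \ref{improper}, $\ll$ is improper exactly when $e>1$, which matches the improper branch of the formula; when $e=1$, $\ll$ is proper, the ceiling is trivial, and we recover the proper branch.

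The remaining case, $\phi\not\smu\phm$, forces $\ll$ to be proper by Corollary \ref{improper}, and Theorem \ref{charKP}(2) yields $\ll=R(\phi)(\xi)\Delta$ with $R(\phi)$ irreducible in $\kappa[y]$ and distinct from $y$, so $\ll$ is coprime to $\xi\Delta$. By the transparent decomposition above, $\ord_\ll(\rr(f))$ equals the multiplicity of $R(\phi)$ in $R(f)$ in $\kappa[y]$, and it remains to identify this with $s_{\mu,\phi}(f)$. Applying Theorem \ref{Hmug} to $\phi$, using $s(\phi)=0$ from the one-sidedness of $N_{\mu,\phm}(\phi)$ given by Theorem \ref{charKP}(2), yields $\hm(\phi)=\rc_d(\phi)R(\phi)(\xi)$, so $\hm(\phi)$ is an associate of $R(\phi)(\xi)$ in $\ggm$. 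Since $R(\phi)(\xi)\equiv R(\phi)(0)\in\kappa^*\pmod{\hm(\phm)\ggm}$, it is coprime to $\hm(\phm)$ in $\ggm$. Combining this with $\hm(f)=\rc_d(f)\hm(\phm)^{s(f)}R(f)(\xi)$ (Theorem \ref{Hmug} applied to $f$), the $\hm(\phi)$-adic valuation of $\hm(f)$ in $\ggm$ equals the $R(\phi)(\xi)$-adic valuation of $R(f)(\xi)$ in $\ggm$, which, since divisibility of degree-zero homogeneous elements in $\ggm$ coincides with divisibility in the subring $\Delta$, equals the multiplicity of $R(\phi)$ in $R(f)$ in $\kappa[y]$. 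This matches $\ord_\ll(\rr(f))$ and closes the proof. The only delicate step is this coprimality argument together with the passage between divisibility in $\ggm$ and in $\Delta$; everything else is bookkeeping against the formulas (\ref{RR}) and Theorem \ref{Hmug}.
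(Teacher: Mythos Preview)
Your argument is correct, and it follows a genuinely different route from the paper's. The paper proceeds globally: it invokes the unique factorization $f\smu\prod_{\phi\in\pset}\phi^{a_\phi}$ in $\ggm$ (quoting \cite[Thm.~6.8]{KeyPol}), then applies $\rr$ and uses Lemma~\ref{multproper} to split $\rr(f)=\prod\rr(\phi^{a_\phi})$, computing each factor separately. Your approach is more local and self-contained: you work directly with the explicit formula~(\ref{RR}) and Theorem~\ref{Hmug}, split according to whether $\phi\smu\phm$, and in the non-trivial case identify $s_{\mu,\phi}(f)$ with $\ord_{R(\phi)}(R(f))$ by a coprimality argument inside $\ggm$. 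This avoids both the external citation and Lemma~\ref{multproper}, at the cost of the small extra step of passing divisibility from $\ggm$ down to $\Delta$ (which is indeed immediate for homogeneous degree-zero elements in a graded domain, as you indicate). In effect your second case reproves Corollary~\ref{nextlength2} directly, whereas the paper derives that corollary \emph{from} the proposition. Either way the content is the same; your argument is slightly more elementary, while the paper's is cleaner once the multiplicativity lemma is in hand.
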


\begin{proof}
Let $\pset$ be a set of representatives of key polynomials under $\mu$-equivalence. For any non-zero $f\in K[x]$, there is a unique factorization:
$$
f\smu \prod\nolimits_{\phi\in \pset}\phi^{a_\phi},\qquad  a_\phi=s_{\mu,\phi}(f),\quad \forall\,\phi\in\pset,
$$
up to units in $\ggm$ \cite[Thm. 6.8]{KeyPol}.

If we apply $\rr$ to both terms of this factorization, Lemma \ref{multproper} shows that:
$$
\rr(f)=\rr\left(\prod\nolimits_{\phi\in \pset}\phi^{a_\phi}\right)=\prod\nolimits_{\phi\in \pset}\rr(\phi^{a_\phi}).
$$
For all proper $\phi\in\pset$ we have $\rr(\phi^{a_\phi})=\rr(\phi)^{a_\phi}$ by Lemma \ref{multproper}. 
Thus, $\rr(\phi)$ divides $\rr(f)$ with exponent $a_\phi$.

If $e>1$, the unique improper $\phi\in\pset$ satisfies $[\phi]=[\phm]$.  By Corollary \ref{samefiber}, $R(\phi)=R(\phm)=1$. Hence, (\ref{RR}) implies 
$$\rr(\phi)=\xi\Delta,\qquad \rr(\phi^{a_\phi})=\xi^{\lceil a_\phi/e\rceil}\Delta=\rr(\phi)^{\lceil a_\phi/e\rceil},
$$
so that $\rr(\phi)$ divides $\rr(f)$ with exponent $\lceil a_\phi/e\rceil$.
\end{proof}

\begin{corollary}\label{nextlength2}
Let $\phi\in\kpm$ such that $\phi\not\smu \phm$. Then, $R(\phi)\in\kappa[y]$ is a monic irreducible polynomial, and 
$$\ord_{R(\phi)}\left(R(f)\right)=s_{\mu,\phi}(f),\qquad \forall\,f\in K[x].$$ 
\end{corollary}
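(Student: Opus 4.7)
The plan is to invoke Proposition \ref{nextlength} and then translate its conclusion into a statement about the operator $R$ via the dictionary provided by (\ref{RR}) and Theorem \ref{kstructure}. First, since $\phi$ is a key polynomial for $\mu$ with $\phi\not\smu\phm$, case (1) of Theorem \ref{charKP} is excluded, so case (2) must apply. This delivers for free that $R(\phi)$ is monic and irreducible in $\kappa[y]$, and moreover tells us that $\rr(\phi) = R(\phi)(\xi)\Delta$. In parallel, Corollary \ref{improper} (in both the cases $e=1$ and $e>1$) shows that $\phi$ is a \emph{proper} key polynomial for $\mu$, so that $\ll := \rr(\phi)$ is a proper maximal ideal of $\Delta$.

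With $\ll$ in hand, Proposition \ref{nextlength} applied in the proper case yields immediately
$$
\ord_\ll(\rr(f)) = s_{\mu,\phi}(f), \qquad \forall\,f\in K[x].
$$
The remaining work is purely structural: I would rewrite the left-hand side as $\ord_{R(\phi)}(R(f))$. By Theorem \ref{kstructure}, $\Delta = \kappa[\xi]$ is a polynomial ring in one variable, so ideal-divisibility orders in $\Delta$ coincide with polynomial divisibility orders in $\kappa[\xi]$. By (\ref{RR}), $\rr(f) = \xi^{\lceil s(f)/e\rceil} R(f)(\xi)\Delta$, while $\ll$ is generated by the monic irreducible polynomial $R(\phi)(\xi)$.

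The only elementary lemma to be checked is that $R(\phi)(\xi)$ is coprime with $\xi$ in $\kappa[\xi]$, so that the $\xi^{\lceil s(f)/e\rceil}$ factor contributes nothing to the order being computed. This coprimality is forced by the irreducibility of $R(\phi)$ together with (\ref{basicR}), which asserts $y\nmid R(\phi)$ and thus rules out $R(\phi)=y$ (up to units). Once this observation is in place, matching orders gives exactly $\ord_{R(\phi)}(R(f))$, completing the argument; the degenerate case $f=0$ is absorbed by the conventions $R(0)=0$ and $s_{\mu,\phi}(0)=\infty$. I do not foresee a real obstacle: the corollary is essentially a transcription of Proposition \ref{nextlength} through the $\rr \leftrightarrow R$ dictionary, and the one point requiring care is the invocation of the \emph{proper} case of Proposition \ref{nextlength}, which is exactly why the hypothesis $\phi\not\smu\phm$ is indispensable.
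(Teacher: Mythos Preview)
Your proposal is correct and follows essentially the same route as the paper's proof: invoke Theorem \ref{charKP}(2) and (\ref{basicR}) to get that $R(\phi)$ is monic irreducible with $R(\phi)\ne y$ and $\rr(\phi)=R(\phi)(\xi)\Delta$, then use properness of $\phi$ to apply the proper case of Proposition \ref{nextlength}, and finally translate via (\ref{RR}) and Theorem \ref{kstructure}. The only difference is that you spell out (via Corollary \ref{improper}) why $\phi\not\smu\phm$ forces $\phi$ to be proper, whereas the paper simply asserts it.
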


\begin{proof}
Denote $\psi=R(\phi)$ and $\ll=\rr(\phi)\in\mx\left(\Delta\right)$. 
By Theorem \ref{charKP} and (\ref{basicR}), $\psi$ is monic irreducible, $\psi\ne y$, and $\ll=\psi(\xi)\Delta$. 
By (\ref{RR}), $$\rr(f)=\xi^{\lceil s(f)/e\rceil}\,R(f)(\xi)\Delta.$$ Since $\psi\ne y$ and $\phi$ is proper, we get $\ord_\psi(R(f))=\ord_\ll(\rr(f))=s_{\mu,\phi}(f)$, by Theorem \ref{kstructure} and Proposition \ref{nextlength}. 
\end{proof}

\subsection{Valuations bounded by semivaluations}\label{subsecTotOrd}

\begin{lemma}\label{minDegree}\cite[Thm. 1.15]{Vaq}
Let $\mu,\mu^*$ be a valuation and a semivaluation, respectively, which extend $v$ to $\kx$, and take values into a common ordered group.

Suppose that $\mu<\mu^*$, and let $\phi\in \kx$ be a monic polynomial with minimal degree satisfying $\mu(\phi)<\mu^*(\phi)$. Then, $\phi$ is a key polynomial for $\mu$ and $$\mu<[\mu;\phi,\ga]\le\mu^*,\quad \mbox{for }\ga=\mu^*(\phi).$$ Moreover, for any $f\in\kx$, the equality $\mu(f)=\mu^*(f)$ holds if and only if $\phi\nmid_\mu f$.
\end{lemma}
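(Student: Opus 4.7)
My plan is to first establish an auxiliary agreement fact between $\mu$ and $\mu^*$ on low-degree polynomials, then prove $\phi$ is a key polynomial for $\mu$, construct the augmentation with the claimed bounds, and finally verify the equivalence. For any non-zero $g\in\kx$ with $\deg(g)<\deg(\phi)$, writing $g=\lc(g)\cdot(g/\lc(g))$ and invoking the minimality of $\deg(\phi)$ on the monic polynomial $g/\lc(g)$ gives the auxiliary fact $\mu(g)=\mu^*(g)$. For $\mu$-minimality of $\phi$, suppose $\phi\mmu f$ with $0<\deg(f)<\deg(\phi)$; then $f\smu\phi q$ for some $q\in\kx$, so $\mu(\phi q-f)>\mu(f)=\mu(\phi)+\mu(q)$. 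The auxiliary fact gives $\mu^*(f)=\mu(f)$; then $\mu^*(\phi q-f)\ge\mu(\phi q-f)>\mu^*(f)$ forces $\mu^*(\phi q)=\mu^*(f)$, and so $\mu^*(\phi)+\mu^*(q)=\mu(\phi)+\mu(q)$, yielding the contradiction $0<\mu^*(\phi)-\mu(\phi)=\mu(q)-\mu^*(q)\le 0$.

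With $\mu$-minimality in hand, Lemma \ref{minimal0} yields $\mu(f)=\min_s\{\mu(a_s\phi^s)\}$ for any $\phi$-expansion $f=\sum a_s\phi^s$, and inspecting $\hm(f)$ in the grading shows $\phi\nmid_\mu f$ iff $\mu(a_0)=\mu(f)$ (since $\hm(a_0)$ cannot be divisible by $\hm(\phi)$ by $\mu$-minimality). For $\mu$-irreducibility, take $f=\sum a_s\phi^s$ and $g=\sum b_t\phi^t$ with $\phi\nmid_\mu f,g$, write $a_0b_0=q\phi+r$ with $\deg(r)<\deg(\phi)$, and note that $r$ is the constant term of the $\phi$-expansion of $fg$. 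By $\mu^*$-multiplicativity and the auxiliary fact applied to $a_0,b_0,r$, we have $\mu^*(a_0b_0)=\mu(a_0b_0)$ and $\mu^*(r)=\mu(r)$; also $\mu^*(q\phi)\ge\mu(q)+\mu^*(\phi)>\mu(q\phi)$. Both strict inequalities $\mu(r)>\mu(a_0b_0)$ and $\mu(r)<\mu(a_0b_0)$ then conflict with the identity $\mu^*(a_0b_0)=\mu^*(q\phi+r)$: bounding $\mu^*(q\phi+r)$ from below term by term contradicts $\mu^*(a_0b_0)=\mu(a_0b_0)$ in either case. Hence $\mu(r)=\mu(a_0b_0)=\mu(fg)$, so $\phi\nmid_\mu fg$.

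With $\phi\in\kpm$, Definition \ref{muprima} yields $\mu'=[\mu;\phi,\ga]$ for $\ga=\mu^*(\phi)$; Proposition \ref{extension}(a) together with $\mu'(\phi)=\ga>\mu(\phi)$ gives $\mu<\mu'$, while the estimate $\mu^*(f)\ge\min_s\{\mu^*(a_s)+s\mu^*(\phi)\}=\mu'(f)$ (via the auxiliary fact) gives $\mu'\le\mu^*$. For the final equivalence: if $\phi\nmid_\mu f$ then $\mu^*(a_0)=\mu(a_0)=\mu(f)$ and every $s\ge 1$ term satisfies $\mu^*(a_s\phi^s)=\mu(a_s)+s\ga>\mu(a_s\phi^s)\ge\mu(f)$, forcing $\mu^*(f)=\mu(f)$; conversely, if $\phi\mmu f$ then $\mu(a_0)>\mu(f)$ and every term has $\mu^*$-value strictly exceeding $\mu(f)$, so $\mu^*(f)>\mu(f)$. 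The principal obstacle is $\mu$-irreducibility, whose proof via the remainder $r$ must carefully combine the auxiliary fact with the strict gap $\mu^*(\phi)-\mu(\phi)>0$ to rule out both sides of strict inequality simultaneously.
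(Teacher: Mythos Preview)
The paper does not supply its own proof of this lemma; it is quoted from \cite[Thm.~1.15]{Vaq} as an external input. Your proposal therefore cannot be compared against an in-paper argument, but on its own merits it is correct and follows the natural route (agreement on low degrees, then $\mu$-minimality, then $\mu$-irreducibility via the auxiliary semivaluation $\mu^*$, then the augmentation bounds and the final equivalence).

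One small imprecision worth tightening: in the $\mu$-irreducibility step, the phrase ``bounding $\mu^*(q\phi+r)$ from below term by term'' handles the case $\mu(r)>\mu(a_0b_0)$ cleanly (both $\mu^*(q\phi)$ and $\mu^*(r)$ strictly exceed $\mu(a_0b_0)$), but in the case $\mu(r)<\mu(a_0b_0)$ a mere lower bound does not yield a contradiction. There you need the ultrametric \emph{equality} case: from $a_0b_0=q\phi+r$ and $\mu(r)<\mu(a_0b_0)$ one gets $\mu(q\phi)=\mu(r)$, hence $\mu^*(q\phi)>\mu(q\phi)=\mu(r)=\mu^*(r)$, and therefore $\mu^*(a_0b_0)=\mu^*(q\phi+r)=\mu^*(r)=\mu(r)<\mu(a_0b_0)=\mu^*(a_0b_0)$, the desired contradiction. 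This is surely what you intend, so the argument stands. Two further implicit points you might make explicit: $\deg(q)<\deg(\phi)$ because $\deg(a_0b_0)<2\deg(\phi)$; and the passage from ``$\phi\nmid_\mu f,\ \phi\nmid_\mu g\Rightarrow\phi\nmid_\mu fg$'' to primality of $H_\mu(\phi)\mathcal G_\mu$ uses the standard fact that a homogeneous principal ideal in a graded domain is prime once it is prime on homogeneous elements.
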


Actually, the subset $\Phi_{\mu,\mu^*}\subset\kpm$ of monic polynomials of minimal degree satisfying $\mu(\phi)<\mu^*(\phi)$ is a $\mu$-equivalence class of key polynomials.  

In fact, for any $\phi\in\Phi_{\mu,\mu^*}$,  $\phi'\in\kpm$, Lemma \ref{minDegree} and  Corollary \ref{samefiber} show that
$$
\phi'\in\Phi_{\mu,\mu^*}\ \sii\ \phi\mmu\phi'\ \sii\ \phi'\smu\phi.
$$

\begin{theorem}\label{totord}
Let $\mu^*$ be a $\qg$-valued semivaluation on $\kx$.
 Then, the interval $(\minf,\mu^*)\subset \V$ is totally ordered. 
\end{theorem}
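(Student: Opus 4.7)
My plan is to show that arbitrary $\mu_1,\mu_2\in(\minf,\mu^*)$ are $\le$-comparable by applying Lemma \ref{minDegree} to each inequality $\mu_i<\mu^*$, which produces a monic polynomial $\phi_i$ of minimal degree $d_i$ with $\mu_i(\phi_i)<\mu^*(\phi_i)$. Such a $\phi_i$ is a key polynomial for $\mu_i$, and crucially $\mu_i(a)=\mu^*(a)$ whenever $\phi_i\nmid_{\mu_i}a$ --- in particular, whenever $\deg(a)<d_i$, by $\mu_i$-minimality. After relabelling so that $d_1\le d_2$, I split into two cases according to the value $\mu_2(\phi_1)$.

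\textbf{Case A:} $\mu_2(\phi_1)=\mu^*(\phi_1)$. This is automatic when $d_1<d_2$, because then $\mu_2$-minimality of $\phi_2$ forces $\phi_2\nmid_{\mu_2}\phi_1$. I form the augmentation $\mu_1'=[\mu_1;\phi_1,\mu^*(\phi_1)]$, which satisfies $\mu_1<\mu_1'\le\mu^*$ by Lemma \ref{minDegree}. For any $f\in\kx$ with $\phi_1$-expansion $f=\sum_s a_s\phi_1^s$, all coefficients satisfy $\deg(a_s)<d_1\le d_2$, hence $\mu_1(a_s)=\mu_2(a_s)=\mu^*(a_s)$. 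The definition of $\mu_1'$, together with the triangle inequality for $\mu_2$ and $\mu_2(\phi_1)=\mu^*(\phi_1)$, then yields $\mu_1'(f)\le\mu_2(f)$ for every $f$, giving $\mu_1<\mu_1'\le\mu_2$.

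\textbf{Case B:} $\mu_2(\phi_1)<\mu^*(\phi_1)$. Since $\phi_1$ is now a monic witness of $\mu_2<\mu^*$ of degree $d_1$, minimality of $d_2$ forces $d_2\le d_1$, so $d_1=d_2=:d$; by Lemma \ref{minDegree}, $\phi_1$ is also a key polynomial for $\mu_2$. Since $\phi_1\in\op{KP}(\mu_i)$ for both $i$, Lemma \ref{minimal0} gives
$$\mu_i(f)=\min\nolimits_s\bigl(\mu^*(a_s)+s\,\mu_i(\phi_1)\bigr),\qquad i=1,2,$$
for every $f=\sum_s a_s\phi_1^s$. The comparison between $\mu_1$ and $\mu_2$ therefore reduces to the scalar comparison of $\mu_1(\phi_1)$ and $\mu_2(\phi_1)$: whichever is the smaller produces the smaller valuation everywhere, so the two are comparable.

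I expect the main obstacle to be Case B: one must exploit the paragraph following Lemma \ref{minDegree} --- namely, that the set $\Phi_{\mu_2,\mu^*}$ of minimal-degree witnesses is a single $\mu_2$-equivalence class of key polynomials --- to guarantee $\phi_1\in\op{KP}(\mu_2)$, which in turn makes the uniform $\phi_1$-expansion formula available to both $\mu_1$ and $\mu_2$. Once that is in hand, the whole theorem collapses to the trivial comparison of two elements of $\qg$.
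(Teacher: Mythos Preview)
Your proof is correct and follows essentially the same strategy as the paper's: pick minimal-degree witnesses $\phi_i$ via Lemma~\ref{minDegree}, use that polynomials of degree $<d_i$ have $\mu_i$-value equal to their $\mu^*$-value, and then compare $\mu_1,\mu_2$ through a common $\phi_1$-expansion once $\phi_1$ is known to be a key polynomial for both. The only cosmetic differences are that the paper splits cases by $d_1<d_2$ versus $d_1=d_2$ (rather than by whether $\mu_2(\phi_1)$ equals or falls short of $\mu^*(\phi_1)$), and that your detour through the augmentation $\mu_1'$ in Case~A is unnecessary---the paper simply bounds $\mu_2(f)\ge\min_s\{\mu_1(a_s\phi_1^s)\}=\mu_1(f)$ directly.
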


\begin{proof}
Let $\eta,\eta'\in\V$ such that $\eta<\mu^*$, $\eta'<\mu^*$.
Take $\phi\in \Phi_{\eta,\mu^*}$, $\phi'\in \Phi_{\eta',\mu^*}$.

Suppose $\deg(\phi)<\deg(\phi')$. By the minimality of both degrees, 
$$\eta'(\phi)=\mu^*(\phi)>\eta(\phi);\quad \eta'(a)=\mu^*(a)=\eta(a),\ \forall \,a\in\kx,\mbox{ with }\deg(a)<\deg(\phi).$$

Hence, $\eta'>\eta$, because for any $g\in K[x]$ with $\phi$-expansion $g=\sum_{0\le s}a_s\phi^s$, we have:
$$
\eta'(g)\ge \mn\nolimits_{0 \le s}\{\eta'(a_s\phi^s)\}\ge\mn\nolimits_{0 \le s}\{\eta(a_s\phi^s)\}=\eta(g).
$$
 
Now, suppose $\deg(\phi)=\deg (\phi')$. Then, $\phi=\phi'+a$ for some $a\in K[x]$ with $\deg(a)<\deg(\phi)$.
By the $\eta'$-minimality of $\phi'$ we have $\eta'(\phi)\le\eta'(\phi')$. 

After eventually exchanging $\eta$ and $\eta'$, we may assume that
$\eta'(\phi')\le \eta(\phi)$. Then, 
$$
\eta'(\phi)\le\eta'(\phi')\le \eta(\phi)<\mu^*(\phi).
$$
Hence, $\phi\in\Phi_{\eta',\mu^*}$, and this implies $\phi'\sim_{\eta'}\phi$ by the remarks preceding this theorem.  Thus, $\eta'(\phi)=\eta'(\phi')\le \eta(\phi)$, and for any $\phi$-expansion  $g=\sum_{0\le s}a_s\phi^s\in\kx$, we get
$$
\eta(g)= \mn\nolimits_{0 \le s}\{\eta(a_s\phi^s)\}\ge\mn\nolimits_{0 \le s}\{\eta'(a_s\phi^s)\}=\eta'(g).
$$
Therefore, $\eta'\le \eta$.
\end{proof}



\subsection{Semivaluations attached to prime polynomials over henselian fields}\label{subsecvF}
\emph{From now on, we assume that the valued field $(K,v)$ is henselian. }

We still denote by $v$ its unique extension to a fixed algebraic closure $\kb$ of $K$.


Let $\P=\P(K)$ be the set of all \emph{prime} (monic, irreducible) polynomials in $\kx$. 

For any  $F\in\P$ we denote 
\begin{itemize}
\item $K_F=\kx/(F\kx)$ \ finite extension of $K$ determined by $F$.
\item $\oo_F\supset\m_F$ \ valuation ring and maximal ideal of $v$ over $K_F$.
\item $ k_F=\oo_F/\m_F$ \ residue class field. 
\item $e(F)$, $f(F)$ \ ramification index and residual degree of $K_F/K$. 
\item $d(F)=\deg(F)/e(F)f(F)$ \ defect of $F$.
\item $Z(F)$ \ multiset of roots of $F$ in $\kb$, counting multiplicities if $F$ is inseparable.
\end{itemize}

Let us consider the semivaluation:
$$
v_F\colon \kx \longtwoheadrightarrow K_F \stackrel{v}\lra \qg\cup\{\infty\},
$$
with support $v_F^{-1}(\infty)=F\kx$. 
By the henselian property,  
$$
v_F(f)=v(f(\t)),\qquad \forall\, f\in\kx,\ \,\forall\,\t\in Z(f).
$$

\begin {lemma}\label{symmetry}
For all $F,G\in\P$, we have \ $v_G(F)/\deg(F)=v_F(G)/\deg(G)$. 
\end {lemma}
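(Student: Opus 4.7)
The plan is to evaluate both sides by unfolding the definition of the semivaluations through the unique extension of $v$ to $\kb$, and then exploit the symmetric double sum over roots.

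First, I would record that since $(K,v)$ is henselian, its extension to $\kb$ is Galois-invariant, so $v(F(\theta'))$ is independent of the choice of $\theta'\in Z(G)$ (all roots of $G$ are conjugate over $K$), and similarly $v(G(\theta))$ does not depend on $\theta\in Z(F)$. Hence both $v_F(G)$ and $v_G(F)$ are well-defined, and
$$
\deg(F)\,v_F(G)=\sum_{\theta\in Z(F)} v(G(\theta)),\qquad \deg(G)\,v_G(F)=\sum_{\theta'\in Z(G)} v(F(\theta')),
$$
where the sums count multiplicities in the inseparable case (consistent with the convention adopted for the multiset $Z(\cdot)$).

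Next, using the monic factorizations $F=\prod_{\theta\in Z(F)}(x-\theta)$ and $G=\prod_{\theta'\in Z(G)}(x-\theta')$ over $\kb$, each summand splits further as
$$
v(G(\theta))=\sum_{\theta'\in Z(G)} v(\theta-\theta'),\qquad v(F(\theta'))=\sum_{\theta\in Z(F)} v(\theta-\theta').
$$
Substituting into the two displays above gives the same double sum
$$
\sum_{(\theta,\theta')\in Z(F)\times Z(G)} v(\theta-\theta')
$$
on the right-hand side in both cases. Therefore
$$
\deg(F)\,v_F(G)=\deg(G)\,v_G(F),
$$
which is the desired equality after dividing by $\deg(F)\deg(G)$.

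No real obstacle is expected; the only point that deserves a sentence of justification is the well-definedness of $v_F(G)$ and $v_G(F)$, which uses the henselian hypothesis in an essential way. (Equivalently, one may observe that $\deg(F)\,v_F(G)=v(\op{Res}(F,G))$ up to sign, and then invoke the symmetry $\op{Res}(F,G)=(-1)^{\deg(F)\deg(G)}\op{Res}(G,F)$ together with the analogous identity for $v_G(F)$.)
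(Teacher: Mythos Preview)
Your proof is correct and is essentially the same argument as the paper's. The paper phrases it via the resultant identity $\Res(F,G)=\prod_{\theta\in Z(F)}G(\theta)=\pm\prod_{\alpha\in Z(G)}F(\alpha)$ together with the constancy of $v(G(\theta))$ and $v(F(\alpha))$ (exactly your parenthetical alternative), while you unfold this one step further to the symmetric double sum $\sum_{\theta,\theta'}v(\theta-\theta')$; the underlying idea is identical.
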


\begin{proof} 
The lemma follows from 
$$\Res(F,G)=\prod\nolimits_{\t\in Z(F)}G(\t)=\pm\prod\nolimits_{\al\in Z(G)}F(\al),$$
since $v(G(\t))$, $v(F(\al)$ are constant for $\t\in Z(F)$, $\al\in Z(G)$, respectively. 
\end{proof}

Let $\mu$ be a valuation on $\kx$ admitting a key polynomial $\phi\in\kx$.


In section \ref{subsecGr}, we attached to $\phi$ a valuation $v_{\mu,\phi}$ on the field $K_\phi$. By the unicity of $v$, we must have $v_{\mu,\phi}=v$. Hence, the semivaluation on $\kx$ induced by  $v_{\mu,\phi}$ coincides with  the semivaluation $v_\phi$ determined by the prime polynomial $\phi\in\P$. 

More precisely,
for any $f\in\kx$ with $\phi$-expansion $f=\sum_{0\le s}a_s\phi^s$, 
$$
v_\phi(f)=v_{\mu,\phi}\left(f+\phi\kx\right)=\mu(a_0).
$$

By Lemma \ref{minimal0}, $\mu<v_\phi$, regardless of the way we embedd $\gm$ into some group containing $\qg$. 
By Lemma \ref{minDegree}, for any non-zero $f\in\kx$ we have
\begin{equation}\label{mu<vphi}
\mu(f)=v_\phi(f)\ \sii\ \phi\nmid_\mu f.
\end{equation}

\subsection{Types over henselian fields}\label{secTypes}
In this section, we introduce \emph{types} and we generalize the results of \cite{ET} to henselian valued fields of arbitrary rank.

A \emph{type} is a pair $\ty=(\mu,\ll)$ belonging to the set
\begin{equation}
\T=\left\{(\mu,\ll) \mid \mu \in \Vkp, \ll \in \mx(\Delta_{\mu}),\ \ll \text{ proper}\right\}.
\end{equation}

A type encodes a certain set of prime polynomials.
A \emph{representative} of the type $\ty=(\mu,\ll)$ is a prime polynomial in the set 
\begin{equation}\nonumber
\rep(\ty)=\left\{ \phi \in \kpm \mid \rr_{\mu}(\phi) = \ll\right\}\subset \P.
\end{equation} 
By Theorem \ref{Max}, $\rep(\ty)$ is a proper equivalence class of key polynomials for $\mu$.

Any type $\ty\in\T$ determines a mapping
$$\ord_{\ty}\colon K[x]\,\lra\,\N, \qquad f  \,\longmapsto\, \ord_{\ll}(\rr_{\mu}(f))=s_{\mu,\phi}(f),\ \forall\,\phi\in\rep(\ty).$$
The last equality is a consequence of Proposition \ref{nextlength}.

\begin{lemma}\label{shareKP}
Suppose that $(K,v)$ is henselian, and let $\mu,\mu'$ be  two valuations admitting a common key polynomial $\phi\in\kpm\cap \op{KP}(\mu')$. Suppose that the value groups of $\mu,\mu'$ have been embedded into some common ordered group. Then,\begin{enumerate}
\item $\mu(\phi)=\mu'(\phi)\ \imp\ \mu=\mu'$.
\item $\mu(\phi)<\mu'(\phi)\ \imp\ \mu'=[\mu;\,\phi,\ga]$, with $\ga=\mu'(\phi)$.\end{enumerate}
\end{lemma}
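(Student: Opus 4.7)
The plan is to prove (1) directly from $\phi$-expansions and the $\mu$- and $\mu'$-minimality of $\phi$, then to reduce (2) to (1) by comparing $\mu'$ with the augmented valuation $\mu'' := [\mu;\phi,\ga]$, where $\ga = \mu'(\phi)$.

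For part (1), I exploit that $\phi$ is simultaneously $\mu$-minimal and $\mu'$-minimal. By Lemma \ref{minimal0}, every $f\in K[x]$ with $\phi$-expansion $f=\sum_s a_s\phi^s$ (so $\deg a_s<\deg\phi$) satisfies
\[
\mu(f)=\mn_s\{\mu(a_s)+s\mu(\phi)\},\qquad \mu'(f)=\mn_s\{\mu'(a_s)+s\mu'(\phi)\}.
\]
Since $\mu(\phi)=\mu'(\phi)$ by hypothesis, it suffices to show that $\mu(a)=\mu'(a)$ for every non-zero $a\in K[x]$ with $\deg(a)<\deg(\phi)$. For such $a$, the $\mu$-minimality of $\phi$ yields $\phi\nmid_\mu a$, and the $\mu'$-minimality yields $\phi\nmid_{\mu'}a$. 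Applying (\ref{mu<vphi}) to each of the two valuations gives $\mu(a)=v_\phi(a)=\mu'(a)$, where $v_\phi(a)=v(a(\t))\in\g$ for any root $\t\in\kb$ of $\phi$. Hence $\mu(f)=\mu'(f)$ for every $f$, proving $\mu=\mu'$.

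For part (2), since $\mu(\phi)<\ga$ in the common ordered group into which $\gm$ and $\g_{\mu'}$ embed, the augmentation $\mu''=[\mu;\phi,\ga]$ is well-defined. By Proposition \ref{extension}(b), $\phi$ is a key polynomial for $\mu''$ of minimal degree, and by construction $\mu''(\phi)=\ga=\mu'(\phi)$. Thus $\mu''$ and $\mu'$ both admit $\phi$ as a key polynomial and assign it the same value, so part (1) applied to the pair $(\mu'',\mu')$ yields $\mu''=\mu'$, which is the claim.

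The only real subtlety is bookkeeping for the ordered groups in which the various values live: $\mu(a)$ and $\mu'(a)$ sit in different target groups a priori, and $v_\phi$ is $\qg$-valued. This is harmless because for $a$ of degree less than $\deg(\phi)$ one has $v_\phi(a)\in v(K^*)=\g$, and $\g$ embeds canonically into every value group in sight; the chain of equalities $\mu(a)=v_\phi(a)=\mu'(a)$ then makes sense and holds in the common ordered group of the statement.
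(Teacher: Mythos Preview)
Your proof is correct and essentially identical to the paper's: both use (\ref{mu<vphi}) to get $\mu(a)=v_\phi(a)=\mu'(a)$ for $\deg(a)<\deg(\phi)$, deduce (1) from $\phi$-expansions, and reduce (2) to (1) via the augmentation $[\mu;\phi,\mu'(\phi)]$; the paper's preliminary appeal to Theorem~\ref{totord} is not actually needed for the argument. One small slip in your closing remark: $v_\phi(a)$ lies in $\g_{v_\phi}\subset\qg$, not in $\g$ in general (take $\phi=x^2-p$ and $a=x$ over $\Q_p$), but this is harmless since in the henselian setting $v_\phi=v_{\mu,\phi}$ already takes values in $\g_{\mu,\deg\phi}\subset\gm$ and likewise in $\g_{\mu'}$, so the chain of equalities holds in the common ordered group.
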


\begin{proof}
Since $\mu,\mu'<v_\phi$, Theorem \ref{totord} shows that $\mu\le\mu'$, after eventually exchanging the valuations.
Moreover, (\ref{mu<vphi}) shows that
$$
\mu(a)=\mu'(a),\qquad \forall\,a\in\kx \mbox{ with }\deg(a)<\deg(\phi).
$$

If $\mu(\phi)=\mu'(\phi)$, then $\mu=\mu'$ because they coincide on $\phi$-expansions.

If $\mu(\phi)<\mu'(\phi)$, denote $\mu_*=[\mu;\,\phi,\mu'(\phi)]$. Then, $\phi$ is a common key polynomial for $\mu'$ and $\mu_*$. Since $\mu'(\phi)=\mu_*(\phi)$, item (1) shows that $\mu'=\mu_*$.
\end{proof}



\begin{theorem}\label{caractypes}
Suppose that $(K,v)$ is henselian. For all $\ty,\ty^{*}\in\T$ the following conditions are equivalent.
\begin{enumerate}
\item[(1)] $\ty=\ty^{*}$.
\item[(2)] $\ord_{\ty}=\ord_{\ty^{*}}$. 
\item[(3)] $\rep(\ty)=\rep(\ty^{*})$. 
\end{enumerate}
\end{theorem}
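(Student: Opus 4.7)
My approach is to prove the chain $(2)\Rightarrow(3)\Rightarrow(1)$, since $(1)\Rightarrow(2)$ and $(1)\Rightarrow(3)$ are immediate from the definitions of $\ord_{\ty}$ and $\rep(\ty)$.

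For $(2)\Rightarrow(3)$, I would pick representatives $\phi\in\rep(\ty)$ and $\phi^*\in\rep(\ty^*)$. Since $\ll$ and $\ll^*$ are proper, Proposition~\ref{nextlength} rewrites $\ord_{\ty}(f)=s_{\mu,\phi}(f)$ and $\ord_{\ty^*}(f)=s_{\mu^*,\phi^*}(f)$ for every $f\in\kx$. Because $\hm(\phi)$ is prime in $\ggm$, one has $\ord_{\ty}(\phi)=s_{\mu,\phi}(\phi)=1$, so the hypothesis $\ord_{\ty}=\ord_{\ty^*}$ forces $s_{\mu^*,\phi^*}(\phi)=1$ and in particular $\phi^*\mid_{\mu^*}\phi$. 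The $\mu^*$-minimality of the key polynomial $\phi^*$ then yields $\deg(\phi)\geq\deg(\phi^*)$, and the symmetric argument applied to $\phi^*\in\rep(\ty^*)$ gives the reverse inequality. With $\deg(\phi)=\deg(\phi^*)$, Lemma~\ref{mid=sim} concludes $\phi\sim_{\mu^*}\phi^*$, hence $\phi\in\rep(\ty^*)$; by symmetry $\rep(\ty)=\rep(\ty^*)$.

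For $(3)\Rightarrow(1)$, pick a common representative $\phi\in\rep(\ty)=\rep(\ty^*)$, so that $\phi\in\kpm\cap\op{KP}(\mu^*)$. Lemma~\ref{shareKP} leaves two possibilities: if $\mu=\mu^*$ then $\ll=\rr_\mu(\phi)=\mathcal{R}_{\mu^*}(\phi)=\ll^*$ and we are done; otherwise, after swapping the types if necessary, $\mu^*=[\mu;\phi,\ga]$ with $\ga>\mu(\phi)$, and we aim for a contradiction. By Proposition~\ref{extension}(b), $\phi$ has minimal degree in $\op{KP}(\mu^*)$, so $\rep(\ty^*)=[\phi]_{\mu^*}$ is the minimal-degree class of $\mu^*$. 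If $\erel(\mu^*)>1$, Corollary~\ref{improper}(2) renders this class improper, contradicting $\ll^*$ being a proper maximal ideal of $\Delta_{\mu^*}$.

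The subcase $\erel(\mu^*)=1$ is the main obstacle, since properness alone no longer supplies the contradiction; to handle it I will construct a polynomial that lies in $\rep(\ty)$ but not in $\rep(\ty^*)$. From $\erel(\mu^*)=1$ and Proposition~\ref{extension}(c), one has $\ga\in\g_{\mu,\deg(\phi)}=\g_{v_\phi}$, so I may choose $a\in\kx$ with $\deg(a)<\deg(\phi)$ and $\mu(a)=\ga$. Set $\phi'=\phi+a$: this is monic of the same degree as $\phi$, and since $\mu(a)=\ga>\mu(\phi)$, we get $\phi'\sim_\mu\phi$, so $\phi'\in[\phi]_\mu=\rep(\ty)$ by Lemma~\ref{mid=sim}. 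On the other hand, Proposition~\ref{extension}(a) gives $\mu^*(a)=\mu(a)=\ga=\mu^*(\phi)$, so $\mu^*(\phi'-\phi)$ fails to be strictly greater than $\mu^*(\phi)$, whence $\phi'\not\sim_{\mu^*}\phi$ and $\phi'\notin\rep(\ty^*)$, contradicting $\rep(\ty)=\rep(\ty^*)$.
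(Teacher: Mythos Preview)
Your proof is correct and follows essentially the same argument as the paper's own proof: the same chain of implications, the same use of Proposition~\ref{nextlength} and Lemma~\ref{mid=sim} for $(2)\Rightarrow(3)$, and the same contradiction via Lemma~\ref{shareKP} and the construction of $\phi+a$ for $(3)\Rightarrow(1)$. The only cosmetic difference is that you split the case $\erel(\mu^*)>1$ off explicitly, whereas the paper observes directly that properness of $\ll^*$ together with $\phi$ having minimal degree in $\op{KP}(\mu^*)$ forces $\erel(\mu^*)=1$ via Corollary~\ref{improper}.
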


\begin{proof}
It is obvious that (1) implies (2). Let $\ty=(\mu,\ll),\;\ty^{*}=(\mu^*,\ll^*)$.

Let us show that (2) implies (3). 
Suppose that $\ord_\ty=\ord_{\ty^*}$. Take $\phi\in\rep(\ty)$, $\phi^*\in\rep(\ty^*)$. By Proposition \ref{nextlength}, 
\begin{equation}\label{phiphi}
1= \ord_{\ll}(\rr_{\mu}(\phi))= \ord_{\ll^{*}}(\rr_{\mu^{*}}(\phi))= s_{\mu^{*},\phi^{*}}(\phi).
\end{equation}
Thus, $\phi^{*} \mid_{\mu^{*}} \phi$. Since $\phi^{*}$ is $\mu^{*}$-minimal, we have $\deg(\phi) \ge \deg(\phi^{*})$. The symmetry of the argument implies $\deg(\phi^{*})=\deg(\phi)$. 

By Lemma \ref{mid=sim}, $\phi$ is a key polynomial for $\mu^{*}$, and $\phi^*\sim_{\mu^*}\phi$. 
Therefore, $$\rr_{\mu^*}(\phi)=\rr_{\mu^*}(\phi^*)=\ll^*,$$ so that $\phi$ is a representative of $\ty^*$.
This shows that $\rep(\ty)\subset \rep(\ty^*)$. 

By the symmetry of the argument, equality holds.\e

Finally, let us prove that (3) implies (1). Take any $\phi\in \rep(\ty)=\rep(\ty^{*})$. 

We need only to show that $\mu=\mu^*$, because then $\ll=\rrm(\phi)=\rr_{\mu^*}(\phi)=\ll^*$. 

Let us show that the assumption $\mu\ne\mu^*$ leads to a contradiction. By Lemma \ref{shareKP}, after eventually exchanging the valuations, this assumption leads to
\begin{equation}\label{mu*augm}
\mu^*=[\mu;\,\phi,\ga],\qquad \ga=\mu^*(\phi)>\mu(\phi).
\end{equation}
By Proposition \ref{extension}, $\phi$ is a key polynomial for $\mu^*$ of minimal degree. 

Since $\ll^*$ is proper, Corollary \ref{improper} shows that $\erel(\mu^*)=1$, and this implies $$\ga=\mu^*(\phi)\in\g_{\mu^*,\deg(\phi)}=\g_{\mu,\deg(\phi)}=\gm,$$
the last equalities by the definition of the augmented valuation and Lemma \ref{groupchain0}.

Thus, there exists $a\in\kx$ with $\deg(a)<\deg(\phi)$ and $\mu(a)=\ga>\mu(\phi)$. Therefore,
$$
\phi+a\smu \phi  \imp  \phi+a\in\rep(\ty),\qquad \phi+a\not\sim_{\mu^*} \phi  \imp  \phi+a\not\in\rep(\ty^*).
$$
This shows that $\rep(\ty)\ne \rep(\ty^*)$, against our assumption. 
\end{proof}

If we dropped properness of the maximal ideal $\ll$ from the definition of a type, there would be different types with the same sets of representatives.

On the other hand, we may still have types $\ty\ne\ty'$ with $\rep(\ty)\cap\rep(\ty')\ne\emptyset$. In order to avoid this situation, we introduce \emph{strong} types.

\begin{definition}
A key polynomial $\phi\in\kpm$ is said to be strong if $\deg(\phi)$ is strictly larger than the minimal degree of key polynomials for $\mu$. 

A maximal ideal $\ll\in\mx(\Delta_\mu)$ is strong if $\ll=\rrm(\phi)$ for some strong $\phi\in\kpm$. 

A type $\ty=(\mu,\ll)$ is strong if $\ll$ is strong.
\end{definition}

By Corollary \ref{improper}, a strong key polynomial is necessarily proper, and the converse is true if $\erel(\mu)>1$.
Let us denote by $\Tstr\subset\T$ the subset of strong types.

\begin{lemma}\label{strong=good}
Let $\ty,\ty'\in\Tstr$. If \ $\rep(\ty)\cap\rep(\ty')\ne\emptyset$, then $\ty=\ty'$.
\end{lemma}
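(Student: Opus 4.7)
The plan is to reduce to Lemma \ref{shareKP} via a shared representative, and then use the definition of strong to rule out a nontrivial augmentation.

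Write $\ty=(\mu,\ll)$, $\ty'=(\mu',\ll')$, and pick $\phi\in\rep(\ty)\cap\rep(\ty')$. Then $\phi$ is simultaneously a key polynomial for $\mu$ and for $\mu'$, with $\rr_\mu(\phi)=\ll$ and $\rr_{\mu'}(\phi)=\ll'$. Since both $\mu$ and $\mu'$ lie in $\Vkp$, their value groups embed into $\qg$, so the hypothesis of Lemma \ref{shareKP} (a common ambient ordered group) is satisfied. Observe that it suffices to show $\mu=\mu'$, for then $\ll=\rr_\mu(\phi)=\rr_{\mu'}(\phi)=\ll'$ and the conclusion follows.

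Now apply Lemma \ref{shareKP}. After possibly exchanging the roles of $\ty$ and $\ty'$, we may assume $\mu(\phi)\le\mu'(\phi)$. If equality holds, item (1) of that lemma yields $\mu=\mu'$, as desired. Otherwise we are in case (2), so $\mu'=[\mu;\phi,\mu'(\phi)]$. By Proposition \ref{extension}(b), $\phi$ is then a key polynomial for $\mu'$ of \emph{minimal} degree. But $\ty'$ is a strong type, meaning that its representative $\phi$ must have degree strictly larger than the minimal degree of key polynomials for $\mu'$. This is a contradiction, ruling out case (2).

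The only real subtlety is whether one can freely exchange $\ty$ and $\ty'$: both types are strong, so regardless of which becomes the "smaller" valuation after the exchange, the argument above applies to the strongness of the larger one. I expect the step that most readers would have to pause at is precisely this invocation of Proposition \ref{extension}(b), which forces $\phi$ to be of minimal degree in $\op{KP}(\mu')$ in the augmentation case. That is where strongness pays off, and aside from this the proof is essentially a one-line application of Lemma \ref{shareKP}.
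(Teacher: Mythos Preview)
Your proof is correct and follows essentially the same approach as the paper: pick a common representative $\phi$, reduce to showing $\mu=\mu'$, apply Lemma \ref{shareKP}, and in the augmentation case invoke Proposition \ref{extension}(b) to force $\phi$ to have minimal degree in $\op{KP}(\mu')$, contradicting strongness. The paper's version is terser (it refers back to the proof of Theorem \ref{caractypes} for the reduction step), but the logical structure is identical.
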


\begin{proof}
Let $\ty=(\mu,\ll),\;\ty^{*}=(\mu^*,\ll^*)$, and take $\phi\in \rep(\ty)\cap\rep(\ty^{*})$.
The assumption $\ty,\ty'\in\Tstr$ implies that $\phi$ is a strong key polynomial simultaneously for $\mu$ and $\mu^*$.

Arguing as in the proof of Theorem \ref{caractypes}, we need only to show that $\mu=\mu^*$. 

By Lemma \ref{shareKP}, the assumption $\mu\ne\mu^*$ leads to (\ref{mu*augm}),
after eventually exchanging the valuations.
By Proposition \ref{extension}, $\phi$ is a key polynomial for $\mu^*$ of minimal degree. This contradicts the fact that $\phi$ is strong. 
\end{proof}



\section{Key polynomials as $\mu$-factors of prime polynomials}\label{secKeyFactors}
We keep assuming that the valued field $(K,v)$ is henselian. 

In this section, we find out arithmetic properties of a prime polynomial $F\in\P$, derived from the existence of  a valuation $\mu\in\Vkp$ admitting a key polynomial $\phi$ such that $\phi\mmu F$.

\begin{theorem}\label{fundamental}
Let $F\in\P$, and let $\phi\in\kpm$ for some valuation $\mu\in\Vkp$. Then, $$\phi\mmu F \ \sii\ 
\mu(\phi)<v_F(\phi).$$ Moreover, if this condition holds, then:
\begin{enumerate}
\item[(1)] Either $F=\phi$, or the Newton polygon $N_{\mu,\phi}(F)$ is one-sided of slope $-v_F(\phi)$.    
\item[(2)] 

$F\smu\phi^\ell$ with $\ell=\ell(N_{\mu,\phi}(F))=\deg(F)/\deg(\phi)$. 

In particular, $\rr(F)$ is a power of the maximal ideal $\rr(\phi)$ in $\Delta$.  
\end{enumerate}
\end{theorem}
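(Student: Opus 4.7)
If $F=\phi$, the statement is trivial. Assume henceforth $F\neq\phi$, so $\gcd(F,\phi)=1$ in $K[x]$; in the $\phi$-expansion $F=\sum_{s=0}^{\ell'}a_s\phi^s$, one has $a_0\neq 0$ and $\ell'=\lfloor\deg(F)/\deg(\phi)\rfloor$. The first move is to reformulate the equivalence via (\ref{mu<vphi}) and Lemma \ref{symmetry}: $\phi\mmu F\iff\mu(F)<v_\phi(F)$, with $v_\phi(F)=(\deg F/\deg\phi)\,v_F(\phi)$. The easy direction $(\Leftarrow)$ is then immediate from Theorem \ref{bound}: if $\mu(\phi)<v_F(\phi)$,
\[
\frac{\mu(F)}{\deg F}\le w(\mu)=\frac{\mu(\phi)}{\deg\phi}<\frac{v_F(\phi)}{\deg\phi},
\]
so $\mu(F)<v_\phi(F)$, i.e., $\phi\mmu F$.

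For the $(\Rightarrow)$ direction, assume $\phi\mmu F$. The plan is to invoke a Hensel-type factorization lemma for Newton polygons, a standard consequence of henselianity in the MacLane--Vaqui\'e framework (cf.\ \cite{Vaq,CompRP}): a factorization of $\nphm(F)$ into segments of distinct slopes would lift to a nontrivial $K[x]$-factorization of $F$, so primality of $F$ forces $\nphm(F)$ to be one-sided, of some slope $-\beta$. Since $\phi\mmu F$, $\beta>\mu(\phi)$. To identify $\beta$ with $v_F(\phi)$, evaluate the $\phi$-expansion at a root $\theta\in\kb$ of $F$: then $z:=\phi(\theta)$ is a root of $Q(z)=\sum_sa_s(\theta)z^s$ and $v(z)=v_F(\phi)$. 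A preliminary argument (using Theorem \ref{totord} and Lemma \ref{minimal0} to place $\mu$ below $v_F$, then Lemma \ref{minDegree} paired with Corollary \ref{samefiber} to show $\phi$ belongs to the minimal-degree class $\Phi_{\mu,v_F}$ of key polynomials) yields the crucial equality $v_F(a_s)=\mu(a_s)$ for all $s$ with $\deg(a_s)<\deg(\phi)$. Thus the $v$-Newton polygon of $Q$ coincides with $\nphm(F)$, whose unique slope must be $-v(z)=-v_F(\phi)$; this establishes (1) and the inequality $\mu(\phi)<v_F(\phi)$.

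For (2), combining the one-sidedness relation $\mu(a_0)=\mu(a_{\ell'})+\ell'\,v_F(\phi)$ with Lemma \ref{symmetry} gives $\mu(a_{\ell'})=(r/\deg\phi)\,v_F(\phi)$, where $r=\deg F-\ell'\deg\phi\in[0,\deg\phi)$. Theorem \ref{bound} applied to the monic polynomial $a_{\ell'}$ of degree $r$ forces $\mu(a_{\ell'})\le r\,w(\mu)=r\,\mu(\phi)/\deg\phi$; if $r>0$, this would yield $v_F(\phi)\le\mu(\phi)$, contradicting $v_F(\phi)>\mu(\phi)$. Hence $r=0$, $a_{\ell'}=1$, $\ell=\ell'=\deg F/\deg\phi$, and $\mu(F)=\ell\mu(\phi)$; since the one-sided slope $-v_F(\phi)$ is strictly below $-\mu(\phi)$, every nonzero $a_s\phi^s$ with $s<\ell$ has $\mu$-value strictly greater than $\mu(\phi^\ell)=\mu(F)$, whence $\hm(F)=\hm(\phi^\ell)$ and $F\smu\phi^\ell$. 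The residual ideal $\rr(F)=\rr(\phi^\ell)$ is then a power of $\rr(\phi)$ by Proposition \ref{nextlength}. \emph{The main obstacle} is the Hensel-type factorization lemma forcing one-sidedness of $\nphm(F)$, together with the delicate bookkeeping required to deduce $\mu\le v_F$ and hence $v_F(a_s)=\mu(a_s)$ on the coefficients; both hinge on the henselian hypothesis but require careful navigation of the MacLane--Vaqui\'e machinery.
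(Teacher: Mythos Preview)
Your handling of the trivial case $F=\phi$, of the easy direction $(\Leftarrow)$, and of item~(2) is correct and essentially identical to the paper's argument. The problem lies in the hard direction $(\Rightarrow)$, where your reasoning is circular within the paper's logical order.

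You rely on two ingredients: (i) a ``Hensel-type factorization lemma'' forcing $N_{\mu,\phi}(F)$ to be one-sided for prime $F$, and (ii) the equality $v_F(a_s)=\mu(a_s)$ for the coefficients of the $\phi$-expansion, obtained by first placing $\mu$ below $v_F$. In this paper, both of these are \emph{consequences} of Theorem~\ref{fundamental}, not inputs to it. The factorization-by-slopes statement is Theorem~\ref{main}, whose proof invokes Theorem~\ref{fundamental} explicitly; and the comparison $\mu\le v_F$ is Theorem~\ref{computation}, proved via Proposition~\ref{samevalue} and Lemma~\ref{averageToSingle}, each of which again uses Theorem~\ref{fundamental}. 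Your appeal to Theorem~\ref{totord} and Lemma~\ref{minimal0} does not break this cycle: Theorem~\ref{totord} only asserts that the interval $(\minf,v_F)$ is totally ordered, but gives you no independent way to certify that $\mu$ lies in that interval; and Lemma~\ref{minimal0} is a Newton-polygon computation with no bearing on the inequality $\mu\le v_F$. Even if one imports the slope-factorization lemma from \cite{Vaq} or \cite{CompRP} as a black box, the identification of the slope still hinges on $v_F(a_s)=\mu(a_s)$, and that is precisely what is unavailable.

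The paper avoids all of this with a short, self-contained trick. Assuming $\phi\mmu F$, fix $\theta\in Z(F)$ and let $g=\sum_{j=0}^k b_jx^j\in K[x]$ be the minimal polynomial of $\phi(\theta)$ over $K$. By henselianity all roots of $g$ have the same value $\gamma=v(\phi(\theta))=v_F(\phi)$, so $v(b_0)=k\gamma$, $v(b_j)\ge(k-j)\gamma$, $v(b_k)=0$. Hence $G:=\sum_j b_j\phi^j$ has $N_{\mu,\phi}(G)$ one-sided of slope $-\gamma$. Since $G(\theta)=g(\phi(\theta))=0$, we have $F\mid G$ in $K[x]$; now additivity of principal polygons (Theorem~\ref{product}) forces $N_{\mu,\phi}^{\mbox{\tiny pp}}(F)$ to be one-sided of slope $-\gamma$ as well, and the positivity of its length (from $\phi\mmu F$ and Lemma~\ref{length}) yields $\mu(\phi)<\gamma$. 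This uses only results established before Theorem~\ref{fundamental}.
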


\begin{proof}
If $F=\phi$ all statements of the theorem are trivial. Assume $F\ne\phi$.\e

If $\phi\nmid_\mu F$, then $\mu(F)=v_\phi(F)$ by (\ref{mu<vphi}). By Theorem \ref{bound} and Lemma \ref{symmetry},
$$
\mu(\phi)\ge \deg(\phi)\mu(F)/\deg(F)=\deg(\phi)v_\phi(F)/\deg(F)=v_F(\phi).
$$

Now, suppose $\phi\mmu F$. Let $\t\in\kb$ be a root of $F$, and consider the minimal polynomial of $\phi(\t)$ over $K$: 
$$g=b_0+b_1x+\cdots+b_kx^k\in K[x],\qquad b_k=1.$$ 
All roots of $g$ in $\kb$ have a constant $v$-value $\ga:=v(\phi(\t))=v_F(\phi)$. Hence,
\begin{equation}\label{slope}
v(b_0)=k\ga,\quad v(b_j)\ge (k-j)\ga, \ 1\le j<k,\quad v(b_k)=0.
\end{equation}
Let $G=\sum_{j=0}^kb_j\phi^j\in\kx$. By (\ref{slope}), $N_{\mu,\phi}(G)$ is one-sided of slope $-\ga$. Since $G(\t)=0$, the polynomial $F$ divides $G$ and Theorem \ref{product} shows that
\begin{equation}\label{sum}
N_{\mu,\phi}^{\mbox{\tiny pp}}(G)=N_{\mu,\phi}^{\mbox{\tiny pp}}(F)+N_{\mu,\phi}^{\mbox{\tiny pp}}(G/F).
\end{equation}
By Lemma \ref{length}, $\ell(N_{\mu,\phi}^{\mbox{\tiny pp}}(F))>0$, since $\phi\mmu F$. Thus, $N_{\mu,\phi}^{\mbox{\tiny pp}}(G)$ has positive length too. This implies $\mu(\phi)<\ga$ by the definition of the principal polygon (see section \ref{subsecNewton}). 

On the other hand, since $N_{\mu,\phi}(G)=N_{\mu,\phi}^{\mbox{\tiny pp}}(G)$ is one-sided of slope $-\ga$, (\ref{sum}) shows that  $N_{\mu,\phi}(F)$ is one-sided of slope $-\ga$ too. 

This proves that $\phi\mmu F$ if and only if $\mu(\phi)<\ga$, and that (1) holds in this case. \e

\begin{figure}
\caption{Newton polygon $N_{\mu,\phi}(F)$ when $\phi\mmu F$.  We take $\ga=v_F(\phi)$}\label{figNF}
\begin{center}
\setlength{\unitlength}{4mm}
\begin{picture}(10,6)
\put(9.8,0.25){$\bullet$}\put(-.25,3.75){$\bullet$}
\put(-1,.6){\line(1,0){14}}\put(0,-.4){\line(0,1){6}}
\put(0,4){\line(3,-1){10}}
\put(-4.7,3.8){\begin{footnotesize}$\mu(a_0)=\ell\ga$\end{footnotesize}}
\put(-.6,-.4){\begin{footnotesize}$0$\end{footnotesize}}
\put(9.8,-.5){\begin{footnotesize}$\ell$\end{footnotesize}}
\put(4.5,3){\begin{footnotesize}slope $-\ga$\end{footnotesize}}
\end{picture}
\end{center}
\end{figure}
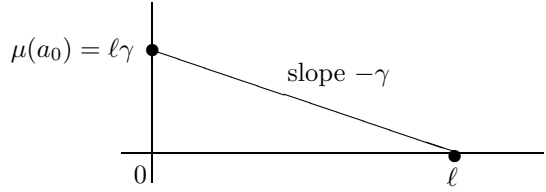

Let us prove item (2) in the case $\phi\mmu F$.
Consider the $\phi$-expansion $F=\sum_{s=0}^\ell a_s\phi^s$. 

By Lemma \ref{symmetry}, $$\mu(a_0)=v_\phi(F)=\deg(F)v_F(\phi)/\deg(\phi)=\deg(F)\ga/\deg(\phi).$$ 
Therefore, from the fact that $N_{\mu,\phi}(F)$ is one-sided of slope $-\ga$ we deduce: 
$$
 \mu(a_\ell)=\mu(a_0)-\ell\ga=\ga\,\dfrac{\deg(F)}{\deg(\phi)}-\ell\ga=\ga\,\dfrac{\deg(a_\ell)+\ell\deg(\phi)}{\deg(\phi)}-\ell\ga=\ga\,\dfrac{\deg(a_\ell)}{\deg(\phi)}.
$$
If $\deg(a_\ell)>0$, then $a_\ell$ would be a monic polynomial contradicting Theorem \ref{bound}:
$$
\mu(a_\ell)/\deg(a_\ell)=\ga/\deg(\phi)>
\mu(\phi)/\deg(\phi).
$$ 
Hence, $a_\ell=1$, so that the leading monomial of the $\phi$-expansion of $F$ is $\phi^\ell$. In particular, $\ell=\deg(F)/\deg(\phi)$.

Since $\mu(\phi)<\ga$, we have $\mu(\phi^\ell)<\mu(a_s\phi^s)$ for all $s<\ell$. Thus, $F\smu\phi^\ell$. The statement about $\rr(F)=\rr(\phi^\ell)$ follows from Proposition \ref{nextlength}. 
\end{proof}

Our next aim is Theorem \ref{computation}, where we find another characterization of the property $\phi\mmu F$, which provides more arithmetic information on $F$. 

To this end, we need some auxiliary results.

\begin{lemma}\label{InsepSep}
For any $\beta\in\kb$ inseparable over $K$, and any $\rho\in\qg$, there exists $\beta\sep\in \kb$ separable over $K$ such that
$$\deg_K(\beta\sep)=\deg_K(\beta),\qquad v(\beta-\beta\sep)>\rho.$$ 
\end{lemma}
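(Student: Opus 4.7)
Since $\beta$ is inseparable over $K$, we are in characteristic $p>0$. Let $f\in K[x]$ be the monic minimal polynomial of $\beta$, of degree $n=\deg_K(\beta)$, and write $f(x)=g(x^{p^e})$ with $g\in K[x]$ separable and $e\ge 1$ maximal. The plan is a two-step construction: first produce \emph{any} separable element $\alpha\in\kb$ close to $\beta$, then inflate its $K$-degree to exactly $n$ via the primitive element theorem.

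For the first step, take $c\in K$ with $v(c)$ large (to be quantified by $\rho$) and consider the perturbation $P(x):=f(x)-cx\in K[x]$, which is separable of degree $n$ since $P'(x)=-c\ne 0$. Factoring $f(x)=(x-\beta)^{p^e}\,h(x)$ in $\kb[x]$ with $h(\beta)\ne 0$, the equation $P(\beta+y)=0$ reads
$$y^{p^e}\,h(\beta+y)=c(\beta+y).$$
Continuity of roots in the henselian field $\kb$ produces $p^e$ roots of $P$ clustering near $\beta$; a leading-order valuation balance (using $h(\beta+y)\to h(\beta)$ as $y\to 0$) gives for each such root
$$v(y)=\frac{v(c)+v(\beta)-v(h(\beta))}{p^e},$$
which exceeds $\rho$ once $v(c)$ is large enough. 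Setting $\alpha:=\beta+y$, the element $\alpha$ is separable over $K$ (being a root of the separable $P\in K[x]$) and satisfies $v(\alpha-\beta)>\rho$. Its degree $d:=[K(\alpha):K]$ divides $n$ but could be strictly smaller.

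For the second step, if $d=n$ take $\beta\sep:=\alpha$. Otherwise, pick $\delta'\in\kb$ separable over $K(\alpha)$ of degree $n/d$, which exists because $\kb$ is algebraically closed. Then $M:=K(\alpha,\delta')$ is separable over $K$ of degree $n$. The primitive element theorem over the infinite field $K$ guarantees that the set of $c'\in K$ for which $\alpha+c'\delta'$ fails to be a primitive element of $M/K$ is finite (at most $\binom{n}{2}$ bad values, coming from ratios of differences of $K$-conjugates). Since $v$ is nontrivial, we may choose $c'\in K$ avoiding this exceptional set and satisfying $v(c')>\rho-v(\delta')$. Put $\beta\sep:=\alpha+c'\delta'$: this is separable over $K$ of degree exactly $n$, and the ultrametric inequality yields
$$v(\beta\sep-\beta)\ge\min\bigl(v(\alpha-\beta),\,v(c'\delta')\bigr)>\rho.$$

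The main obstacle lies in the first step: rigorously producing a root $\alpha$ of $P$ in $\kb$ with the predicted valuation requires a Hensel-type iteration in $(\kb,v)$, starting from the leading-order approximation $y_0\in\kb$ with $y_0^{p^e}=c\beta/h(\beta)$ (which is the unique such element, by uniqueness of $p^e$-th roots in characteristic $p$) and converging via successive corrections to a true root of the full equation. The second step is essentially standard, relying only on the primitive element theorem over the infinite field $K$ and the ultrametric inequality.
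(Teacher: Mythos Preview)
Your Step~1 is essentially the paper's argument, but the paper dispatches what you call the ``main obstacle'' in one line: since $P(\beta)=-c\beta$, one has
\[
\sum_{\alpha\in Z(P)} v(\beta-\alpha)=v(P(\beta))=v(c)+v(\beta)>n\rho,
\]
so by pigeonhole some root $\alpha$ of $P$ already satisfies $v(\beta-\alpha)>\rho$. No Hensel iteration or leading-order valuation balance is needed.

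Your Step~2, however, has genuine gaps. First, the claim that $d=[K(\alpha):K]$ divides $n$ is unjustified: $\alpha$ being a root of the degree-$n$ polynomial $P$ gives only $d\le n$, not $d\mid n$. Second, even granting $d\mid n$, the existence of $\delta'\in\kb$ separable over $K(\alpha)$ of degree exactly $n/d$ does not follow from $\kb$ being algebraically closed; what you need is that the \emph{separable} closure of $K(\alpha)$ admit an intermediate extension of that degree, and this can fail. In fact the degree equality in the lemma cannot hold in full generality: if $K$ is separably closed but not algebraically closed (for instance the separable closure of $\F_p((t))$, which is henselian), then every element of $\kb$ separable over $K$ already lies in $K$ and has degree~$1$, while inseparable $\beta$ of degree $p>1$ exist. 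So no argument can complete Step~2. The paper's own proof stops after the pigeonhole step and thus actually establishes only $\deg_K(\beta\sep)\le\deg_K(\beta)$; this weaker inequality is all that is used downstream (in Lemma~\ref{averageToSingle} one only needs $\deg_K(\beta\sep)<\deg(\phi)$), and the stated equality should be read as a slip.
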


\begin{proof}

Let $g\in\kx$ be the minimal polynomial of $\beta$ over $K$. We have $g'=0$.

Consider the polynomial $g\sep=g+\pi x\in\kx$, where $\pi\in K^*$ satisfies $$v(\pi)>\deg_K(\beta)\,\rho-v(\beta).$$ Since $g\sep'=\pi\ne0$, this polynomial is separable.  On the other hand,
$$
\sum_{\al\in\op{Z}(g\sep)}v(\beta-\al)=v\left(g\sep(\beta)\right)=v(\pi\beta)=v(\pi)+v(\beta)> \deg_K(\beta)\,\rho.
$$
Hence, there exists $\al\in\op{Z}(g\sep)$ such that $v(\beta-\al)>\rho$. We may take $\beta\sep=\al$.
\end{proof}

\begin{lemma}\label{averageToSingle}
Let $F\in\P$, $\mu\in\Vkp$ and $\phi\in\kpm$ such that $\phi\mmu F$.
Let $\t\in Z(F)$. 

Then, for all $\beta\in\kb$ with $\deg_K(\beta)<\deg(\phi)$, we have $$v(\t-\beta)<\mx\left\{v(\t-\al)\mid \al\in Z(\phi)\right\}.$$
\end{lemma}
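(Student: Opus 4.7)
The plan is to argue by contradiction. Suppose $v(\t-\beta)\ge \rho$, where $\rho:=\mx\{v(\t-\al)\mid \al\in Z(\phi)\}$, and aim for a contradiction with Theorem \ref{bound} applied to the minimal polynomial $g\in\kx$ of $\beta$ over $K$. Observe that $g$ is monic and nonconstant, with $\deg(g)=\deg_K(\beta)<\deg(\phi)$; moreover, since $\phi\in\P$ is the minimal polynomial of each of its roots, $\beta$ is not a root of $\phi$, so $\phi(\beta)\ne 0$.

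The crucial auxiliary inequality is $v(\phi(\beta))\ge v(\phi(\t))$. For each root $\al\in Z(\phi)$, write $\beta-\al=(\t-\al)-(\t-\beta)$; since $v(\t-\beta)\ge\rho\ge v(\t-\al)$, the ultrametric inequality yields $v(\beta-\al)\ge v(\t-\al)$. Summing over roots counted with multiplicity,
$$
v(\phi(\beta))=\sum\nolimits_{\al\in Z(\phi)}v(\beta-\al)\;\ge\;\sum\nolimits_{\al\in Z(\phi)}v(\t-\al)=v(\phi(\t))=v_F(\phi).
$$

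I would then translate $v(\phi(\beta))$ into $\mu$-language. Since $g(\beta)=0$, we have $v_g(\phi)=v(\phi(\beta))$, and Lemma \ref{symmetry} gives $v_g(\phi)\deg(g)=v_\phi(g)\deg(\phi)$. Because $\phi$ is $\mu$-minimal and $\deg(g)<\deg(\phi)$, we have $\phi\nmid_\mu g$, so $\mu(g)=v_\phi(g)$ by (\ref{mu<vphi}). Combining with the strict inequality $\mu(\phi)<v_F(\phi)$ from Theorem \ref{fundamental},
$$
\mu(g)\deg(\phi)=v(\phi(\beta))\deg(g)\;\ge\; v_F(\phi)\deg(g)\;>\;\mu(\phi)\deg(g).
$$
Dividing by $\deg(g)\deg(\phi)>0$ yields $\mu(g)/\deg(g)>\mu(\phi)/\deg(\phi)=w(\mu)$, contradicting Theorem \ref{bound}.

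The key idea---perhaps the only nontrivial step---is recognizing $v(\phi(\beta))$ as a bridge: the ultrametric comparison with $v(\phi(\t))$ exploits the hypothesis $v(\t-\beta)\ge\rho$, while Lemma \ref{symmetry} combined with the $\mu$-minimality of $\phi$ converts it into a lower bound on $\mu(g)/\deg(g)$ that violates the weight bound. Notably, no separability assumption on $F$ or $\phi$ is required.
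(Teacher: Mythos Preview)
Your proof is correct and in fact streamlines the paper's argument. Both proofs reach the same contradiction with Theorem~\ref{bound} via $\mu(g)/\deg(g)>\mu(\phi)/\deg(\phi)$, but they pivot on dual quantities: the paper estimates $v(g(\al))$ by summing $v(\al-\sigma(\beta))$ over a Galois group, whereas you estimate $v(\phi(\beta))$ by summing $v(\beta-\al)$ directly over $Z(\phi)$ and then invoke Lemma~\ref{symmetry} to pass to $v_\phi(g)=\mu(g)$. These are two sides of the resultant identity $\deg(g)\,v(\phi(\beta))=v(\Res(\phi,g))=\deg(\phi)\,v(g(\al))$. The paper's route, because it averages over Galois conjugates, needs the orbits of $\al$ and $\beta$ to have the expected sizes; this is why it first reduces to the separable case via Lemma~\ref{InsepSep} and then works inside a finite Galois extension. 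Your route bypasses both Lemma~\ref{InsepSep} and the Galois machinery entirely: the factorization $\phi(\beta)=\prod_{\al\in Z(\phi)}(\beta-\al)$ over the multiset of roots is valid regardless of separability, and Lemma~\ref{symmetry} already encapsulates the needed symmetry. So your argument is shorter and more elementary, at no cost in generality.
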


\begin{proof}
Denote $\delta=\mx\left\{v(\t-\al)\mid \al\in Z(\phi)\right\}$, and fix $\al\in Z(\phi)$ with $v(\t-\al)=\delta$. 

Let $\beta\in\kb$ with $\deg_K(\beta)<\deg(\phi)$. We want to show that 
 $v(\t-\beta)<\delta$, or equivalently,  $v(\al-\beta)<\delta$. 
 

By Lemma \ref{InsepSep}, we may assume that $\t$, $\al$ and $\beta$ are separable over $K$.
Consider a finite Galois extension  $M/K$ containing $\t$, $\al$ and $\beta$, and denote $G=\gal(M/K)$. 

Let us assume that $v(\al-\beta)\ge\delta$, and show that this leads to a contradiction. 

For all $\sigma\in G$ we get:
$$
\as{1.3}
\begin{array}{rl}
v(\al-\sigma(\beta))&=\ \, v(\al-\t+\t-\sigma(\al)+\sigma(\al)-\sigma(\beta))\\
&\ge\ \,\mn\{v(\al-\t),\,v(\t-\sigma(\al)),\,v(\sigma(\al)-\sigma(\beta))\}=v(\t-\sigma(\al)),
\end{array}
$$
because $v(\t-\sigma(\al))\le\delta=v(\al-\t)$ and $v(\sigma(\al)-\sigma(\beta))=v(\al-\beta)\ge\delta$. 

Therefore, if $g\in\kx$ is the minimal polynomial of $\beta$ over $K$, we get
$$
\dfrac{\#G}{\deg(g)}\,v(g(\al))=\sum_{\sigma\in G}v(\al-\sigma(\beta))\ge\sum_{\sigma\in G}v(\t-\sigma(\al))= \dfrac{\#G}{\deg(\phi)}\,v(\phi(\t)).
$$

By Theorem \ref{fundamental}, this inequality implies
$$
\mu(g)/\deg(g)=v(g(\al))/\deg(g)\ge v(\phi(\t))/\deg(\phi)>\mu(\phi)/\deg(\phi),
$$
which contradicts Theorem \ref{bound}.
\end{proof}


\begin{proposition}\label{samevalue}
Let $F\in\P$, $\mu\in\Vkp$ and $\phi\in\kpm$ such that $\phi\mmu F$. 

Then, for all $g\in\kx$ with $\deg(g)<\deg(\phi)$, we have $v_\phi(g)=v_F(g)$.
\end{proposition}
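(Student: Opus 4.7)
The plan is to compute both $v_\phi(g)$ and $v_F(g)$ by evaluating $g$ at carefully chosen roots in $\kb$, and then control the valuations of the linear factors by the ultrametric inequality, using Lemma~\ref{averageToSingle} as the decisive input.

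First, I would fix $\t\in Z(F)$ and pick $\al\in Z(\phi)$ that maximizes $\delta:=v(\t-\al)$ among roots of $\phi$. The henselian hypothesis gives $v_\phi(g)=v(g(\al))$ and $v_F(g)=v(g(\t))$, so the task reduces to showing $v(g(\al))=v(g(\t))$. Factoring $g=c\prod_{\be}(x-\be)$ over $\kb$ (with roots counted with multiplicity, in case $g$ is inseparable), I would split
\[
v(g(\al))=v(c)+\sum\nolimits_\be v(\al-\be),\qquad
v(g(\t))=v(c)+\sum\nolimits_\be v(\t-\be),
\]
and prove the term-by-term equality $v(\al-\be)=v(\t-\be)$ for every root $\be$ of $g$.

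For each such $\be$ we have $\deg_K(\be)\le\deg(g)<\deg(\phi)$, so Lemma~\ref{averageToSingle} applies and yields $v(\t-\be)<\delta=v(\t-\al)$. Writing $\al-\be=(\al-\t)+(\t-\be)$ and invoking the ultrametric triangle inequality then forces
\[
v(\al-\be)=\min\{v(\al-\t),\,v(\t-\be)\}=v(\t-\be),
\]
which is exactly what is needed.

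The only genuinely delicate point is the inseparable case: one must interpret the product $\prod_\be(x-\be)$ as a product over the multiset of roots (or equivalently treat $g$ as a power of its radical times a $p$-th power), so that the summation identity for valuations of products still holds. Once this bookkeeping is in place, the rest is a direct application of Lemma~\ref{averageToSingle} combined with the ultrametric principle, and no further input from the theory of key polynomials is required.
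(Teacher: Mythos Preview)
Your proposal is correct and follows essentially the same route as the paper's own proof: choose $\al\in Z(\phi)$ maximizing $v(\t-\al)$, factor $g$ over $\kb$, apply Lemma~\ref{averageToSingle} to each root $\be$ of $g$ to get $v(\t-\be)<v(\t-\al)$, and use the ultrametric inequality to conclude $v(\al-\be)=v(\t-\be)$. The paper phrases the last step as $\t-\be\sim_v\al-\be$, but this is exactly your ultrametric argument; your extra care with the inseparable bookkeeping is warranted and does not diverge from the paper's approach.
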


\begin{proof}
Take $\t\in Z(F)$ and $\al\in Z(\phi)$ such that $v(\t-\al)=\mx\left\{v(\t-\al'\mid \al'\in Z(\phi)\right\}$. 

Write $g=c\prod_j(x-\beta_j)$ with $c\in K$, $\be_j\in\kb$. Since $\deg_K(\beta_j)<\deg(\phi)$,  Lemma \ref{averageToSingle} shows that $\t-\beta_j\sim_v \al-\beta_j$ for all $j$. Therefore, $g(\t)\sim_v g(\al)$. 

In particular, $v(g(\t))=v(g(\al))$. 
\end{proof}

\begin{theorem}\label{computation}
 Let $F\in \P$. A valuation $\mu\in\Vkp$ admits a key polynomial $\phi$ such that  $\phi\mmu F$ if and ony if $\mu< v_F $. 
  In this case, for all non-zero $f\in K[x]$,
\begin{equation}\label{equality}
\mu(f)=v_F(f) \ \sii\ \phi\nmid_{\mu}f. 
\end{equation}
\end{theorem}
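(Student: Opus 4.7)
The plan is to derive both implications from Lemma \ref{minDegree}, applied to $\mu$ and the semivaluation $\mu^* = v_F$, since this lemma already supplies both the existence of a suitable key polynomial and the sharp criterion for when $\mu$ and $v_F$ agree.

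For the implication $(\Leftarrow)$, suppose $\mu < v_F$. Lemma \ref{minDegree} produces a monic polynomial $\phi$ of minimal degree with $\mu(\phi) < v_F(\phi)$; it further asserts that $\phi$ is a key polynomial for $\mu$ and that, for all nonzero $f \in \kx$, $\mu(f) = v_F(f) \iff \phi \nmid_\mu f$, which is precisely (\ref{equality}). By Theorem \ref{fundamental}, the inequality $\mu(\phi) < v_F(\phi)$ is equivalent to $\phi \mmu F$, so $\phi$ is the desired key polynomial.

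For the converse, suppose $\phi \in \kpm$ satisfies $\phi \mmu F$. Theorem \ref{fundamental} yields $\mu(\phi) < v_F(\phi)$, so in particular $\mu \ne v_F$. I want to apply Lemma \ref{minDegree} to this specific $\phi$ in order to deduce both $\mu < v_F$ and (\ref{equality}); the nontrivial task is to verify that $\phi$ has minimal degree among monic polynomials $\psi$ with $\mu(\psi) < v_F(\psi)$. The key observation is that, for every $g \in \kx$ with $\deg(g) < \deg(\phi)$, the $\mu$-minimality of $\phi$ combined with (\ref{mu<vphi}) gives $\mu(g) = v_\phi(g)$, while Proposition \ref{samevalue} gives $v_\phi(g) = v_F(g)$; therefore $\mu(g) = v_F(g)$, ruling out such a $g$ from satisfying the strict inequality. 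With minimality of $\deg(\phi)$ in hand, Lemma \ref{minDegree} delivers $\mu < [\mu;\phi,v_F(\phi)] \le v_F$ together with the equivalence (\ref{equality}).

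The main obstacle is the minimality step in the converse, where one needs the henselian-flavored Proposition \ref{samevalue} (which rests in turn on Lemma \ref{averageToSingle}) to exercise control over $v_F$ below degree $\deg(\phi)$. Without this control, one could not rule out a key polynomial $\phi$ with $\phi \mmu F$ being ``inefficient'' -- i.e., of strictly larger degree than some other monic polynomial witnessing the gap between $\mu$ and $v_F$ -- and hence could not transfer Lemma \ref{minDegree}'s conclusions to the given $\phi$.
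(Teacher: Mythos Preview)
Your argument is essentially the paper's, with one logical wrinkle in the converse direction that needs fixing. Lemma \ref{minDegree} has $\mu<\mu^*$ as a \emph{hypothesis}, not a conclusion; you cannot invoke it to deduce $\mu<v_F$. What you have actually established (via $\mu$-minimality of $\phi$, equation (\ref{mu<vphi}), and Proposition \ref{samevalue}) is that $\mu(a)=v_F(a)$ whenever $\deg(a)<\deg(\phi)$, together with $\mu(\phi)<v_F(\phi)$ from Theorem \ref{fundamental}. From these two facts you must first deduce $\mu\le v_F$ directly before Lemma \ref{minDegree} becomes available. This is the short $\phi$-expansion computation the paper carries out: for $f=\sum_s a_s\phi^s$,
\[
v_F(f)\ \ge\ \mn_s\{v_F(a_s)+s\,v_F(\phi)\}\ \ge\ \mn_s\{\mu(a_s)+s\,\mu(\phi)\}\ =\ \mu(f),
\]
the last equality by Lemma \ref{minimal0}. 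Once $\mu<v_F$ is established this way, and since you have already verified the minimality of $\deg(\phi)$, Lemma \ref{minDegree} legitimately yields (\ref{equality}). With this one-line insertion your proof is complete and coincides with the paper's.
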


\begin{proof}
By Lemma \ref{minDegree}, the condition $\mu< v_F$ implies the existence of $\phi\in\kpm$ satisfying (\ref{equality}). In particular,  $\phi\mmu F$. 

Conversely, suppose that $\phi\mmu F$ for some $\phi\in\kpm$. By Proposition \ref{samevalue}, for any $a\in\kx$ with $\deg(a)<\deg(\phi)$ we have $v_F(a)=v_\phi(a)=\mu(a)$. 

On the other hand, $v_F(\phi)>\mu(\phi)$ by Theorem \ref{fundamental}. Hence, for any $f\in \kx$ with $\phi$-expansion $f=\sum_{0\le s}a_s\phi^s$, we have
$$
v_F(f)\ge \mn\left\{v_F(a_s\phi^s)\mid 0\le s\right\}\ge\mn\left\{\mu\left(a_s\phi^s\right)\mid 0\le s\right\}=\mu(f).
$$

Hence, $\mu<v_F$, and Lemma \ref{minDegree} shows that $\phi$ satisfies (\ref{equality}).
\end{proof}

Theorem \ref{computation} provides a practical device for the computation of $v_F$.
For any given $f\in K[x]$, one seeks a pair $\mu,\phi$ such that $\phi\mmu F$ and $\phi\nmid_\mu f$, leading to $v_F(f)=\mu(f)$.

This yields a very efficient routine for the computation of the valuations attached to prime ideals in number fields, or places of function fields of curves \cite{newapp,gen}.

\begin{corollary}\label{efphiF}
Let $F\in\P$, $\mu\in\Vkp$ and $\phi\in\kpm$ such that $\phi\mmu F$. Then,
$$
e(\phi)\mid e(F),\qquad f(\phi)\mid f(F),\qquad d(\phi)\mid d(F).
$$
\end{corollary}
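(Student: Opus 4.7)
The plan is to prove the three divisibilities successively, exploiting the strong compatibility between the valuations $\mu$, $v_\phi$ and $v_F$ established in Theorem \ref{computation}: $\mu<v_F$, and $\mu(f)=v_F(f)$ precisely when $\phi\nmid_\mu f$. The first two divisibilities will follow from natural inclusions of value groups and residue fields; the third will be reduced, via Theorem \ref{fundamental}(2), to finding an embedding of $K_\phi$ as a valued subfield of $K_F$.

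For $e(\phi)\mid e(F)$, recall from Proposition \ref{vphi} that $\g_{v_\phi}=\g_{\mu,\deg(\phi)}$. By $\mu$-minimality of $\phi$, every nonzero $a\in\kx$ with $\deg(a)<\deg(\phi)$ satisfies $\phi\nmid_\mu a$, so $\mu(a)=v_F(a)\in\g_{v_F}$ by Theorem \ref{computation}. Hence $\g_{v_\phi}\subseteq\g_{v_F}$, and $e(\phi)=[\g_{v_\phi}:\g]$ divides $[\g_{v_F}:\g]=e(F)$.

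For $f(\phi)\mid f(F)$, I would exhibit a field embedding $k_\phi\hookrightarrow k_F$. Fix $\t\in Z(F)$ and consider the map $\varphi\colon\Delta_\mu\to k_F$ sending $\hm(g)\mapsto g(\t)+\m_F$; this is well defined on $\pset_0/\pset_0^+$ since $v_F\ge\mu$, and is a ring homomorphism because evaluation at $\t$ is. I claim $\ker\varphi=\rr(\phi)$: for $g$ with $\mu(g)=0$, the image vanishes iff $v_F(g)>0=\mu(g)$, iff $\phi\mmu g$ by Theorem \ref{computation}, iff $\hm(g)\in\rr(\phi)$. Using Proposition \ref{maxideal} to identify the quotient, the induced injection $k_\phi=\Delta_\mu/\rr(\phi)\hookrightarrow k_F$ is a field embedding, yielding $f(\phi)\mid f(F)$.

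The last divisibility, $d(\phi)\mid d(F)$, is the main obstacle. Set $a=e(F)/e(\phi)$ and $b=f(F)/f(\phi)$, both positive integers by the previous steps. Combining $\deg(F)=e(F)f(F)d(F)$ with $\deg(F)=\ell\deg(\phi)=\ell\,e(\phi)f(\phi)d(\phi)$ from Theorem \ref{fundamental}(2) yields $ab\,d(F)=\ell\,d(\phi)$, so the claim reduces to $ab\mid\ell$, which is \emph{not} forced by the numerical relations alone. My plan is to produce a genuine embedding of valued fields $K_\phi\hookrightarrow K_F$, after which multiplicativity of the defect in henselian towers gives $d(K_F/K_\phi)=d(F)/d(\phi)\in\Z_{>0}$, completing the proof. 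To construct this embedding, I would pick $\al\in Z(\phi)$ maximising $v(\t-\al)$ and invoke a Krasner-type argument in the henselian field $K_F$: Lemma \ref{averageToSingle} rules out competition from elements of $K$-degree below $\deg(\phi)$, and a further analysis of the orbit of $\al$ under $\gal(\kb/K_F)$ should force $\al\in K_F$, producing the desired copy $K(\al)\cong K_\phi$ inside $K_F$.
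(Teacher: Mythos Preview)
Your arguments for $e(\phi)\mid e(F)$ and $f(\phi)\mid f(F)$ are correct and essentially match the paper's: the paper uses Proposition~\ref{samevalue} for the value-group inclusion and a slightly different route to identify $\ker(\Delta_\mu\to k_F)$ (via $\rr_\mu(\phi)^n=\rr_\mu(F)\subset\ll_F$ from Theorem~\ref{fundamental}(2)), but your direct computation of the kernel through Theorem~\ref{computation} is cleaner and equally valid.

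The gap is in the defect part. Your plan hinges on producing an embedding $K_\phi\hookrightarrow K_F$ of valued fields, but \emph{no such embedding need exist}. Already when $\deg(\phi)=\deg(F)$ (so $\ell=1$), the hypothesis $\phi\mmu F$ just says $\phi\smu F$, and $\mu$-equivalent key polynomials can define non-isomorphic extensions of $K$. For a concrete instance, take $K=\Q_2$, $\mu=\mu_0(x,1/2)$, $\phi=x^2-2$, $F=x^2-6$: one checks $\phi\smu F$ (since $\mu(\phi-F)=\mu(4)=2>1=\mu(\phi)$), yet $\Q_2(\sqrt{2})$ and $\Q_2(\sqrt{6})$ are distinct quadratic extensions, so neither embeds in the other. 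Your proposed Krasner step therefore cannot succeed: Lemma~\ref{averageToSingle} controls only elements of $K$-degree \emph{strictly less} than $\deg(\phi)$, and gives no handle on the $K_F$-conjugates of $\al$, which is exactly what a Krasner argument would need. The ``further analysis of the orbit of $\al$'' you allude to does not exist in general.

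The paper does not attempt a self-contained proof of $d(\phi)\mid d(F)$ either; it simply invokes Vaqui\'e~\cite{Vaq2}. The underlying mechanism there is structural rather than field-theoretic: defect in the henselian setting is accounted for by \emph{limit} augmentations in the MacLane--Vaqui\'e description of $v_F$, and the condition $\phi\mmu F$ (equivalently $\mu<v_F$ with $\phi$ singled out) lets one build an admissible family for $v_F$ that passes through one for $v_\phi$, so the limit steps contributing to $d(\phi)$ reappear among those for $d(F)$. If you want a direct argument, that is the route to pursue; the embedding-of-fields strategy is a dead end.
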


\begin{proof}
Take arbitrary roots $\al\in Z(\phi)$, $\t\in Z(F)$.

The elements of $\g_{v_\phi}$ are of the form $v(g(\al))$ for $g\in\kx$ with $\deg(g)<\deg(\phi)$.  
By Proposition \ref{samevalue}, these values coincide with $v(g(\t))\in\g_{v_F}$. This proves $e(\phi)\mid e(F)$.

By Theorem \ref{computation}, $\mu<v_F$, and this determines a canonical ring homomorphism
$$\Delta_\mu\lra  k_F,\qquad g+\pset_0^+(\mu)\ \longmapsto\ g(\t)+\m_F.
$$ 
The kernel $\ll_F$ of this homomorphism is a non-zero prime ideal of  $\Delta_\mu$. Since this ring is a PID, $\ll_F$ is a maximal ideal. 

Clearly, $\rrm(F)\subset \ll_F$. By Theorem \ref{fundamental}, there exists a positive integer $n$ such that $\rr_{\mu}(\phi)^n=\rr_{\mu}(F)\subset \ll_F$. Thus, $\rr_{\mu}(\phi)=\ll_F$, because both are maximal ideals. 

By Proposition \ref{maxideal}, $ k_\phi \simeq \Delta_\mu/\rr_{\mu}(\phi)= \Delta_\mu/\ll_F$, which is isomorphic to a subfield of $ k_F$. This proves $f(\phi)\mid f(F)$. 

The fact that $d(\phi)\mid d(F)$ follows from \cite{Vaq2}.
\end{proof}

If $\phi\mmu F$, we may think $\phi$ as a ``germ" of an approximation to $F$, and the value $v_F(\phi)$ as a measure of the quality of the approximation.

By Lemma \ref{minDegree}, the valuation $\mu'=[\mu;\,\phi,v_F(\phi)]$ is closer to $v_F$, and it is natural to expect that any $\phi'\in\op{KP}(\mu')$, such that $\phi'\mid_{\mu'}F$, will be a better approximation to $F$.

The next result is a crucial step for an efficient computation of these approximations. The prime polynomial $F$ is usually unknown in practice, but the key polynomial $\phi'$ may be constructed by using only some ``residual" information about $F$.

\begin{lemma}\label{goingup}
Take $F\in\P$, $\mu\in\Vkp$, $\phi\in\kpm$ such that $\phi\mmu F$ and $\phi\ne F$. 

For $\mu'=[\mu;\,\phi,v_F(\phi)]$, let $\kappa'=\kappa(\mu')$ and $R'=R_{\mu',\phi,\ep'}$ be the residual polynomial operator (section \ref{subsecRi}), for some $\ep'\in\gg_{\mu'}^*$ of degree $-e'v_F(\phi)$, where $e'=\erel(\mu')$. 

Then, $R'(F)\in\kappa'[y]$ is the power of some monic irreducible polynomial $\psi\in\kappa'[y]$.

Moreover, take any   monic polynomial $\phi'\in\kx$ of degree $e'\deg(\psi)\deg(\phi)$ such that $R'(\phi')=\psi$. 
Then, $\phi'$ is a proper key polynomial for $\mu'$ such that $\phi'\mid_{\mu'}F$, and
$$
v_F(\phi)<v_F(\phi'),\qquad e(\phi')=e(\phi)\,e',\qquad f(\phi')=f(\phi)\deg(\psi),\qquad d(\phi')=d(\phi).
$$
\end{lemma}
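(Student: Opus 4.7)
The plan proceeds in three steps. The first step is setup: since $\phi\mmu F$ and $\phi\ne F$, Theorem~\ref{fundamental} gives $\mu(\phi)<v_F(\phi)$, so $\mu'=[\mu;\phi,v_F(\phi)]$ is well-defined and Lemma~\ref{minDegree} places $\mu<\mu'\le v_F$. Proposition~\ref{extension}(b) combined with Theorem~\ref{fundamental} shows that $N_{\mu',\phi}(F)=N_{\mu,\phi}(F)$ is one-sided of slope $-v_F(\phi)=-\mu'(\phi)$; its principal Newton polygon thus collapses to the left endpoint at abscissa zero, and Lemma~\ref{length} gives $s_{\mu',\phi}(F)=0$, i.e., $\phi\nmid_{\mu'}F$.

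The second step, and the main obstacle, is to show $R'(F)=\psi^n$ for some monic irreducible $\psi\in\kappa'[y]$ and integer $n\ge1$. By Theorem~\ref{kstructure}, $\Delta_{\mu'}=\kappa'[\xi']$ is a PID, so $\rr_{\mu'}(F)$ factors uniquely into maximal ideals; Proposition~\ref{nextlength} combined with $s_{\mu',\phi}(F)=0$ rules out the (potentially improper) factor $\xi'\Delta_{\mu'}=\rr_{\mu'}(\phi)$. The delicate part is uniqueness of the proper factor: suppose toward contradiction that $\phi_1\not\sim_{\mu'}\phi_2$ are proper key polynomials for $\mu'$, each $\mu'$-dividing $F$. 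The augmentations $\mu_j=[\mu';\phi_j,v_F(\phi_j)]\le v_F$ lie in $(\minf,v_F)$, so Theorem~\ref{totord} makes them comparable; after relabeling $\mu_1\le\mu_2$,
$$
v_F(\phi_1)=\mu_1(\phi_1)\le\mu_2(\phi_1)\le v_F(\phi_1)
$$
forces $\mu_2(\phi_1)=v_F(\phi_1)>\mu'(\phi_1)$, whence $\phi_2\mid_{\mu'}\phi_1$ by Proposition~\ref{extension}(a). Theorem~\ref{fundamental} applied to $(\mu',\phi_1,\phi_2)$ then yields $\phi_1\sim_{\mu'}\phi_2^k$ with $k=\deg(\phi_1)/\deg(\phi_2)$; Lemma~\ref{multproper} gives $\rr_{\mu'}(\phi_1)=\rr_{\mu'}(\phi_2)^k$, and maximality of both sides forces $k=1$ and $\phi_1\sim_{\mu'}\phi_2$ by Corollary~\ref{samefiber}, a contradiction. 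Thus $\rr_{\mu'}(F)=\ll^n$ for a unique proper maximal ideal $\ll=\rr_{\mu'}(\phi_1)$; setting $\psi=R'(\phi_1)$, equation (\ref{RR}) together with the transcendence of $\xi'$ over $\kappa'$ and monicity yields $R'(F)=\psi^n$.

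The third step, identifying $\phi'$ and computing invariants, is routine. Theorem~\ref{charKP}(2) identifies $\phi'$ as a key polynomial for $\mu'$, since $\deg(\phi')=e'\deg(\psi)\deg(\phi)$ and $R'(\phi')=\psi$ is irreducible. Since $\psi\ne y$ by (\ref{basicR}), (\ref{RR}) gives $\rr_{\mu'}(\phi')\ne\xi'\Delta_{\mu'}=\rr_{\mu'}(\phi)$, so $\phi'\not\sim_{\mu'}\phi$ by Corollary~\ref{samefiber} and $\phi'$ is proper. The same corollary applied to $R'(\phi')=\psi=R'(\phi_1)$ yields $\phi'\sim_{\mu'}\phi_1$, so $\rr_{\mu'}(\phi')=\ll\supset\rr_{\mu'}(F)$ and $\phi'\mid_{\mu'}F$. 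For the invariants, Corollary~\ref{efphi} applied to $(\mu',\phi')$ gives $e(\phi')=e(\mu')$ and $f(\phi')=f(\mu')\deg(\psi)$; Proposition~\ref{vphi} identifies $\g_{v_\phi}=\g_{\mu,\deg(\phi)}$, whence $e(\mu')=[\g_{\mu'}:\g]=e'\cdot e(\phi)$; Proposition~\ref{maxideal} applied to $\mu'$ with $\phi$ of minimal degree gives $\kappa(\mu')\simeq k_\phi$, so $f(\mu')=f(\phi)$. Combining these, $e(\phi')=e'\,e(\phi)$, $f(\phi')=\deg(\psi)f(\phi)$, and $d(\phi')=d(\phi)$ by direct arithmetic. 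Finally, $v_F(\phi)<v_F(\phi')$ follows by combining Theorem~\ref{fundamental} ($v_F(\phi')>\mu'(\phi')$, since $\phi'\mid_{\mu'}F$) with the equality case of Theorem~\ref{bound} on the $\mu'$-minimal polynomials $\phi,\phi'$, which gives $\mu'(\phi')=(\deg\phi'/\deg\phi)\,v_F(\phi)\ge v_F(\phi)$.
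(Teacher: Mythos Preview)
Your proof is correct. The principal difference from the paper is in the second step, where you establish that $R'(F)$ is a prime power. The paper handles this more directly: it picks an arbitrary monic irreducible factor $\psi$ of $R'(F)$, constructs $\phi'$ with $R'(\phi')=\psi$, observes via Corollary~\ref{nextlength2} that $\phi'\mid_{\mu'}F$, and then invokes item~(2) of Theorem~\ref{fundamental} to obtain $F\sim_{\mu'}(\phi')^{\ell'}$ with $\ell'=\deg(F)/\deg(\phi')$; Corollary~\ref{prodR} then gives $R'(F)=\psi^{\ell'}$ at once. Your route instead argues, via the total ordering of Theorem~\ref{totord}, that any two proper key polynomials for $\mu'$ which $\mu'$-divide $F$ must be $\mu'$-equivalent, and deduces the prime-power shape of $\rr_{\mu'}(F)$ from that. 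Both arguments are valid; the paper's is shorter because Theorem~\ref{fundamental}(2) already encodes the ``single prime'' phenomenon, whereas you are essentially re-deriving it from the same total-ordering mechanism that underlies that theorem. One small point to make explicit in your contradiction argument: you should observe that $\phi_j\ne F$, so that $v_F(\phi_j)<\infty$ and the augmentation $\mu_j=[\mu';\phi_j,v_F(\phi_j)]$ is a genuine valuation; indeed, if $\deg(\phi_j)=\deg(F)$ then Lemma~\ref{mid=sim} would give $\phi_1\sim_{\mu'}F\sim_{\mu'}\phi_2$, already contradicting the hypothesis. Steps~1 and~3 match the paper's proof.
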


\begin{proof}
By Proposition \ref{extension} and Theorem \ref{fundamental}, $N_{\mu',\phi}(F)=N_{\mu,\phi}(F)$ is one-sided of slope $-v_F(\phi)=-\mu'(\phi)$, and length $\ell=\deg(F)/\deg(\phi)$. 

By the definition of $R'$, $\deg(R'(F))=\ell/e'>0$ and $y\nmid R'(F)$. 
Let $\psi$ be a monic irreducible factor of $R'(F)$ in $ \kappa'[y]$. 

Take a monic $\phi'\in\kx$ such that $\deg(\phi')=e'\deg(\psi)\deg(\phi)$ and $R'(\phi')=\psi$.

Since $\phi$ is a key polynomial for $\mu'$ of minimal degree,  Theorem \ref{charKP} shows that $\phi'\not\sim_{\mu'}\phi$. In particular, $\phi'$ is a proper key polynomial for $\mu'$.

By Corollary \ref{nextlength2}, $s_{\mu',\phi'}(F)=\ord_{\psi}(R'(F))>0$, so that $\phi'\mid_{\mu'}F$. 

By using Theorems \ref{fundamental} and \ref{bound}, we deduce
$$
v_F(\phi')>\mu'(\phi')=\deg(\phi')\,\mu'(\phi)/\deg(\phi)\ge \mu'(\phi)=v_F(\phi),
$$
and $F\sim_{\mu'}(\phi')^{\ell'}$, for $\ell'=\deg(F)/\deg(\phi')$. 
By Corollary \ref{prodR}, $R'(F)=R'(\phi')^{\ell'}=\psi^{\ell'}$.

Now, by Corollary \ref{efphi}, applied to $\phi$ and $\phi'$ as key polynomials for $\mu'$, we get
$$
e(\phi')=e(\mu')=e'e(\phi),\qquad f(\phi')=f(\mu')\deg(\psi)=f(\phi)\deg(\psi).
$$

Finally, $d(\phi')=d(\phi)$ follows from $\deg(\phi')=e'\deg(\psi)\deg(\phi)=e(\phi')f(\phi')d(\phi)$. 
\end{proof}

\subsection*{A generalization of Hensel's lemma}

Theorems \ref{fundamental} and \ref{computation} provide a fundamental result concerning factorization of polynomials over henselian fields. 


For $\mu\in\Vkp$, $\phi\in\kpm$ and $\ga\in\qg$ such that $\ga>\mu(\phi)$, let us denote
\begin{itemize}
\item $\mu_\ga=[\mu;\phi, \ga]$.
\item $e_\ga=\erel(\mu_\ga)$ \ relative ramification index of $\mu_\ga$. 
\item $\kappa_\ga=\kappa(\mu_\ga)$ \ algebraic closure of $k$ in $\Delta_{\mu_\ga}$.
\item $R_{\mu_\ga}\colon \kx\to \kappa_\ga[y]$ \ operator  $R_{\mu_\ga,\phi,\ep_\ga}$, for some $\ep_\ga\in\gg_{\mu_\ga}^*$ of degree $-e_\ga\ga$. 
\end{itemize}

\begin{theorem}\label{main}
Let $\phi\in\kpm$ for some $\mu\in\Vkp$. Let  $f\in \kx$ be a 
 monic polynomial.
 For each slope $-\ga$ of the  principal Newton polygon $N_{\mu,\phi}^{\mbox{\tiny pp}}(f)$, let
$$
R_{\mu_\ga}(f)=\prod\nolimits_\psi \psi^{a_{\ga,\psi}}
$$
be the factorization of $R_{\mu_\ga}(f)$ into a product of powers of pairwise different monic irreducible polynomials $\psi\in \kappa_\ga[y]$.

Then, $f$ factorizes  in $\kx$ into a product of monic polynomials:
\begin{equation}\label{factors}
f=f_0\,\phi^{\ord_\phi(f)}\prod\nolimits_{(\ga,\psi)} f_{\ga,\psi}.
\end{equation}

If $\ell=\ell\left(N^{\mbox{\tiny pp}}_{\mu,\phi}(f)\right)$, the degrees of the factors are:
$$\deg(f_0)=\deg(f)-\ell\deg(\phi),\qquad
\deg(f_{\ga,\psi})=e_\ga a_{\ga,\psi}\deg(\psi)\deg(\phi).
$$ 
Moreover, for any pair $(\ga,\psi)$, it holds:
\begin{enumerate}
\item $N_{\mu,\phi}(f_{\ga,\psi})$ is one-sided of slope $-\ga$ and length $e_\ga a_{\ga,\psi}\deg(\psi)$. 
\item $R_{\mu_\ga}(f_{\ga,\psi})=\psi^{a_{\ga,\psi}}$.
\item For every prime factor $F$ of $f_{\ga,\psi}$, we have $v_F(\phi)=\ga$ and
$$
e(\phi)\,e_\ga\mid e(F),\qquad f(\phi)\deg(\psi)\mid f(F),\qquad d(\phi)\mid d(F).
$$
\item If $a_{\ga,\psi}=1$, then $f_{\ga,\psi}$ is irreducible and it is a key polynomial for $\mu_\ga$, with
$$
e(f_{\ga,\psi})=e(\phi)\,e_\ga,\qquad f(f_{\ga,\psi})=f(\phi)\deg(\psi),\qquad d(f_{\ga,\psi})=d(\phi).
$$
\end{enumerate}
\end{theorem}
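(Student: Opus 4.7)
The plan is to read off the decomposition directly from the prime factorization of $f$ in $\kx$. Write $f=\phi^{\ord_\phi(f)}\prod_j F_j^{m_j}$ with the $F_j$ pairwise distinct monic primes distinct from $\phi$. For each $F_j$, Theorem \ref{fundamental} forces a dichotomy: either $\phi\nmid_\mu F_j$, in which case I absorb $F_j^{m_j}$ into $f_0$; or $\phi\mmu F_j$, in which case $N_{\mu,\phi}(F_j)$ is one-sided of slope $-\gamma_j:=-v_{F_j}(\phi)<-\mu(\phi)$, and Lemma \ref{goingup} applied to $F_j$ with $\mu_{\gamma_j}$ in the role of $\mu'$ provides a monic irreducible $\psi_j\in\kappa_{\gamma_j}[y]$ such that $R_{\mu_{\gamma_j}}(F_j)=\psi_j^{b_j}$, with $\deg(F_j)=e_{\gamma_j}b_j\deg(\psi_j)\deg(\phi)$. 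Grouping the $F_j$ in the second class by the pair $(\gamma_j,\psi_j)$ produces the factors $f_{\gamma,\psi}=\prod_{(\gamma_j,\psi_j)=(\gamma,\psi)} F_j^{m_j}$ of the desired factorization (\ref{factors}).

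Additivity of principal Newton polygons (Theorem \ref{product}) combined with Lemma \ref{length} then shows that the slopes of $\npphm(f)$ are exactly $\{-\gamma_j\}$ and the side of slope $-\gamma$ has length $\sum_{\gamma_j=\gamma}m_j\deg(F_j)/\deg(\phi)$. Multiplicativity of the residual polynomial operator (Corollary \ref{prodR}), together with the observation that $R_{\mu_\gamma}$ equals $1$ on all other prime factors of $f$, forces the exponent $a_{\gamma,\psi}$ in the hypothesis $R_{\mu_\gamma}(f)=\prod_\psi\psi^{a_{\gamma,\psi}}$ to equal $\sum m_jb_j$ (summed over $j$ with $(\gamma_j,\psi_j)=(\gamma,\psi)$). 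The degree identities $\deg(f_{\gamma,\psi})=e_\gamma a_{\gamma,\psi}\deg(\psi)\deg(\phi)$ and $\deg(f_0)=\deg(f)-\ell\deg(\phi)$ follow at once.

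For items (1)--(3): item (1) is immediate from Theorem \ref{product}. Item (2) requires the auxiliary vanishing $R_{\mu_\gamma}(F')=1$ for every prime factor $F'$ of $f$ outside the $(\gamma,\psi)$-group; three cases must be handled separately ($F'=\phi$; $\phi\nmid_\mu F'$; $\phi\mmu F'$ with $v_{F'}(\phi)\ne\gamma$), and in each of them the $\gamma$-component of $N_{\mu,\phi}(F')$ reduces to a single vertex for purely geometric slope reasons, so $R_{\mu_\gamma}(F')$ has degree zero and is equal to $1$. For item (3), I would apply Lemma \ref{goingup} to each prime factor $F$ of $f_{\gamma,\psi}$ to produce a proper key polynomial $\phi'$ for $\mu_\gamma$ with $\phi'\mid_{\mu_\gamma}F$ and arithmetic invariants $e(\phi')=e(\phi)e_\gamma$, $f(\phi')=f(\phi)\deg(\psi)$, $d(\phi')=d(\phi)$; Corollary \ref{efphiF} applied to the pair $(\phi',F)$ then transports these divisibilities to $e(F)$, $f(F)$, $d(F)$.

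Finally, for item (4), when $a_{\gamma,\psi}=1$ the conditions $\deg(f_{\gamma,\psi})=e_\gamma\deg(\psi)\deg(\phi)$ and $R_{\mu_\gamma}(f_{\gamma,\psi})=\psi$ match exactly condition (2) of Theorem \ref{charKP}, so $f_{\gamma,\psi}$ is a key polynomial for $\mu_\gamma$, proper because $R_{\mu_\gamma}(\phi)=1\ne\psi$, and in particular irreducible in $\kx$. Corollary \ref{efphi} then gives $e(f_{\gamma,\psi})=e(\mu_\gamma)$ and $f(f_{\gamma,\psi})=f(\mu_\gamma)\deg(\psi)$, while Propositions \ref{vphi} and \ref{maxideal}, applied to $\mu_\gamma$ with $\phi$ as its minimal-degree key polynomial, yield the identifications $e(\mu_\gamma)=e(\phi)e_\gamma$ and $f(\mu_\gamma)=f(\phi)$; the defect identity $d(f_{\gamma,\psi})=d(\phi)$ drops out of $\deg=efd$. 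I expect the main obstacle to be the auxiliary vanishing step in item (2): it is not conceptually deep but requires careful case analysis of the $\gamma$-component of each Newton polygon as $\gamma$ varies, and is the one point at which the geometry of Newton polygons (rather than formal multiplicativity) is genuinely used.
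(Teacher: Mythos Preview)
Your proposal is correct and follows essentially the same approach as the paper: factor $f$ into primes, sort them via Theorem \ref{fundamental} and Lemma \ref{goingup} according to the pair $(\gamma,\psi)$, and then read off the Newton-polygon and residual identities from Theorem \ref{product} and Corollary \ref{prodR}, with Corollary \ref{efphiF} supplying item (3). The only cosmetic difference is in item (4): the paper simply observes that when $a_{\gamma,\psi}=1$ the factor $f_{\gamma,\psi}$ itself may be taken as the key polynomial $\phi_{\gamma,\psi}$ already constructed in the proof of item (3), whereas you re-invoke Theorem \ref{charKP} and Corollary \ref{efphi} directly---both routes are equivalent.
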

 
\begin{proof}
Let $f=F_1\cdots F_t$ be the factorization of $f$ into a product of prime polynomials in $\kx$. These prime factors are not necessarily pairwise different.

Let us group these prime factors according to some of their properties with respect to the pair $\mu,\phi$. 
\begin{itemize}
\item The factor $f_0$ is the product of all $F_j$ satisfying $\phi\nmid_\mu F_j$.
\item The factor $\phi^{\ord_\phi(f)}$ is the product of all $F_j$ equal to $\phi$. 
\item The factor $f_{\ga,\psi}$ is the product of all $F_j$ such that $\phi\mmu F_j$, $N_{\mu,\phi}(F_j)$ is one-sided of slope $-\ga$ and $R_{\mu_\ga}(F_j)$ is a power of $\psi$.
\end{itemize}

By Theorem \ref{fundamental}, the factors $F_j\ne\phi$ such that $\phi\mmu F_j$ have one-sided Newton polygon $N_{\mu,\phi}(F_j)$ of slope $-\ga$, with $\ga=v_{F_j}(\phi)>\mu(\phi)$. By Lemma \ref{goingup}, $R_{\mu_\ga}(F_j)$ is a power of some irreducible $\psi\in\kappa_\ga[y]$. 

By Theorem \ref{product}, $-\ga$ is one of the slopes of $N^{\mbox{\tiny pp}}_{\mu, \phi}(f)$, and by Corollary \ref{prodR}, $\psi$ is one of the irreducible factors of $R_{\mu_\ga}(f)$.
Therefore, every prime factor $F_j$ such that $F_j\ne\phi$ and $\phi\mmu F_j$ falls into one (and only one) of the factors $f_{\ga,\psi}$. 

This proves the factorization (\ref{factors}).


By Lemma \ref{length} and Theorem \ref{fundamental}, for all $1\le j\le t$ we have 
$$\ell_j:=\ell\left(N^{\mbox{\tiny pp}}_{\mu,\phi}(F_j)\right)=s_{\mu,\phi}(F_j)=\begin{cases}0,& \mbox{ if }\phi\nmid_\mu F_j,\\                                                                    \deg(F_j)/\deg(\phi),&\mbox{ if } \phi\mmu F_j.
\end{cases}
$$ 
By Theorem \ref{product}, $\ell=\ell_1+\cdots+\ell_t$. Hence,
$$
 \deg(f)-\deg(f_0)=\sum_{\phi\mmu F_j}\deg(F_j)=\sum_{\phi\mmu F_j}\ell_j\deg(\phi)=\sum_{j=1}^t\ell_j\deg(\phi)=\ell\deg(\phi).
$$

Now, for each pair $(\ga,\psi)$, let $J(\ga,\psi)$ be the set of indices $j$ such that $F_j$ is a prime factor of $f_{\ga,\psi}$. 
For all $j\in J(\ga,\psi)$, 
\begin{equation}\label{lj}
\ell_j=\ell\left(N_{\mu,\phi}(F_j)\right)=e_\ga\deg\left(R_{\mu_\ga}(F_j)\right)=e_\ga\deg(\psi)\ord_\psi R_{\mu_\ga}(F_j).
\end{equation}

On the other hand, $R_{\mu_\ga}(F_j)\in \kappa_\mu^*$ is a constant for all $j\not\in  J(\ga,\psi)$. In fact, $R_{\mu_\ga}(\phi)=1$, and for $\phi\nmid_\mu F_j$, the condition $\ell\left(N^{\mbox{\tiny pp}}_{\mu,\phi}(F_j)\right)=0$  implies that the $\ga$-component of $N_{\mu,\phi}(F_j)$ is reduced to a single point (cf. section \ref{subsecNewton}).

Corollary \ref{prodR} shows that $R_{\mu_\ga}(f_{\ga,\psi})=\prod\nolimits_{j\in J(\ga,\psi)}R_{\mu_\ga}(F_j)$. Hence,
\begin{equation}\label{apsi}
a_{\ga,\psi}=\ord_\psi R_{\mu_\ga}(f)=\ord_\psi R_{\mu_\ga}(f_{\ga,\psi})=\sum\nolimits_{j\in J(\ga,\psi)}\ord_\psi R_{\mu_\ga}(F_j), 
\end{equation}
We may conclude that
$$
\deg(f_{\ga,\psi})=\sum\nolimits_{j\in J(\ga,\psi)}\deg(F_j)=\sum\nolimits_{j\in J(\ga,\psi)}\ell_j\deg(\phi)=e_\ga a_{\ga,\psi}\deg(\psi)\deg(\phi).
$$

Moreover,  items (1) and (2) follow from Theorem \ref{product} and Corollary \ref{prodR}, having in mind equations (\ref{lj}) and (\ref{apsi}).

Consider a monic polynomial $\phi_{\ga,\psi}\in\kx$ of degree $e_\ga\deg(\psi)\deg(\phi)$ such that $R_{\mu_\ga}(\phi_{\ga,\psi})=\psi$.
By Lemma \ref{goingup}, $\phi_{\ga,\psi}$ is a key polynomial for $\mu_\ga$ such that $\phi_{\ga,\psi}\mid_{\mu_\ga} F$ for each prime factor of $f_{\ga,\psi}$ and
$$
e(\phi_{\ga,\psi})=e_\ga e(\phi),\qquad 
f(\phi_{\ga,\psi})=\deg(\psi) f(\phi),\qquad d(\phi_{\ga,\psi})=d(\phi).
$$
Thus, item (3) follows from Corollary \ref{efphiF}.

If $a_{\ga,\psi}=1$, we may take $\phi_{\ga,\psi}=f_{\ga,\psi}$, by items (1) and (2). This proves item (4). 
\end{proof}

\noindent{\bf Remarks. }
(1)  Theorem \ref{main} has been recently found by Jakhr-Khanduja-Sangwan, with a slightly different formulation  \cite{J-Khan}. The authors use the language of minimal pairs of definition of residually transcendental valuations.

Our proof is shorter and more accessible, mainly because the residual polynomial operator is a more malleable tool than the classical technique of \emph{lifting} polynomials from $\kappa[y]$ to $K[x]$.\e

(2)  This result is valid for an arbitrary valued field $(K,v)$, as long as the valuation $\mu$ is inductive. In this case, $\mu$ may be lifted to the henselization $K^h$ of $(K,v)$, and  $\phi$ is still a key polynomial of the lifted valuation.

In this way, it may be used to detect information about the prime factors in $K^h[x]$ of any given $f\in\kx$.

\section{Defectless polynomials and inductive valuations}\label{secAppr}
We keep assuming that $(K,v)$ is a henselian valued field.

Let $F\in\P$ be a prime polynomial, and let $\mu\in\Vkp$ admitting a key polynomial $\phi$ such that $\phi\mmu F$.
We may think of $\phi$ as a germ of an approximation to $F$. 

The iteration of the procedure of Lemma \ref{goingup} yields a MacLane chain based on the initial valuation $\mu$, with strictly better approximations:
$$
\mu\stackrel{\phi,\ga}\lra 
\mu'\stackrel{\phi',\ga'}\lra 
\mu''\stackrel{\phi'',\ga''}\lra \cdots ,\qquad \ga=v_F(\phi)<\ga'=v_F(\phi')<\ga''=v_F(\phi'')<\,\cdots
$$

We say that this process \emph{converges} to $ F$ if after a finite number of steps we reach a valuation $\mu^{(n)}$ such that $F$ is a key polynomial for $\mu^{(n)}$. 


Since $\phi^{(n)}\mid_{\mu^{(n)}}F$, Lemma \ref{mid=sim} and Corollary \ref{samefiber} show that the process converges if and only if we reach a key polynomial with $\deg\left(\phi^{(n)}\right)=\deg(F)$.

In this section, we discuss this convergence when $F$ is defectless; that is, when it has trivial defect $d(F)=1$.

\subsection{Okutsu frames of defectless polynomials}\label{subsecOk=Dless}
Consider a prime polynomial $F\in\P$ of degree $n>1$, and a fixed root $\t\in Z(F)\subset \kb$.

For any integer $1<m\le n$, consider the set of weighted values
$$
W_m(F)=\left\{\dfrac{v(g(\t))}{\deg(g)}\;\Big|\; g\in\kx\mbox{ monic},\ 0<\deg(g)<m\right\}\subset\qg.
$$

\begin{definition}\label{minpairPoly}
Suppose that the set $W_n(F)$ contains a maximal value:
$$w(F):=\mx\left(W_n(F)\right).$$ 

We say that $\phi,F$ is a \emph{distinguished pair} of polynomials if $\phi\in\kx$ is a monic polynomial of minimal degree among the monic polynomials satisfying
$$
0<\deg(\phi)<n, \quad\ v(\phi(\t))/\deg(\phi)=w(F).
$$
\end{definition}

\begin{definition}
We say that $F$ is an \emph{Okutsu polynomial} if all sets $W_m(F)$ contain a maximal element, for $1<m\le n$. 
\end{definition}

Suppose that $F$ is an Okutsu polynomial, and  $\phi,F$ is a distinguished pair. 

If $\deg(\phi)>1$, we may consider a monic $\phi'\in\kx$ of minimal degree such that  
$$
0<\deg(\phi')<\deg(\phi),\qquad \dfrac{v(\phi'(\t))}{\deg(\phi')}=w'(F):=\mx\left(W_{\deg(\phi)}(F)\right).
$$
By the minimality of $\deg(\phi)$, we necessarily have $w'(F)<w(F)$.

An iteration of this argument leads to a finite sequence $$\phi_0,\phi_1,\dots,\phi_r,\phi_{r+1}=F$$ of monic polynomials in $\kx$ such that 
\begin{equation}\label{frameDeg}
1=\deg(\phi_0)<\deg(\phi_1)<\cdots<\deg(\phi_r)<\deg(F),
\end{equation}
whose weighted values $w_i(F):=v(\phi_i(\t))/\deg(\phi_i)$ satisfy: 
\begin{equation}\label{frame}
\deg(g)<\deg(\phi_{i+1})\ \ \Longrightarrow\ \ \dfrac{v(g(\t))}{\deg(g)}\le w_i(F)<w_{i+1}(F),\qquad 0\le i\le r,
\end{equation}
for all monic polynomials $g\in\kx$ of positive degree. 

Note that $w_{r}(F)=w(F)$ and $w_{r+1}(F)=\infty$.

\begin{definition}
An \emph{Okutsu frame} of an Okutsu polynomial $F$, is a list $$[\phi_0,\phi_1,\dots,\phi_r]$$ of monic polynomials in $\kx$ satisfying (\ref{frameDeg}) and (\ref{frame}).

The length $r$ of the frame is called the \emph{Okutsu depth} of $F$. Clearly, the Okutsu depth, the degrees $\deg(\phi_1),\dots,\deg(\phi_r)$, and the values $w_1(F),\dots,w_r(F)=w(F)\in\qg$ are intrinsic data of $F$.
\end{definition}



For instance, any key polynomial $\phi$ (of degree greater than one) for an inductive valuation $\mu$ is an Okutsu polynomial. Also, any optimal MacLane chain of $\mu$ determines an  Okutsu frame of $\phi$.

\begin{lemma}\label{lower}
Consider an optimal MacLane chain of an inductive valuation:
\begin{equation}\label{depth5}
\minf\stackrel{\phi_0,\ga_0}\lra\  \mu_0\ \stackrel{\phi_1,\ga_1}\lra\  \mu_1\ \stackrel{\phi_2,\ga_2}\lra\ \cdots
\ \stackrel{\phi_{r-1},\ga_{r-1}}\lra\ \mu_{r-1} 
\ \stackrel{\phi_{r},\ga_{r}}\lra\ \mu_{r}=\mu.
\end{equation}
 
Let $F\in\P$ and $\phi\in\kpm$ such that $\phi\mmu F$. Then,  $$v_F(\phi_i)=\ga_i,\quad 0\le i<r,\quad \mbox{ \ and \ }\quad \phi_i\mid_{\mu_{i-1}}F,\quad 0< i\le r.$$
Moreover, if $\phi\nmid_\mu\phi_r$, then $v_F(\phi_r)=\ga_r$ as well. 
\end{lemma}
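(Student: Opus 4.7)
The plan is to use Theorem~\ref{computation} as the central tool. Since $\phi\mmu F$, that theorem gives $\mu<v_F$ together with the characterization $\mu(f)=v_F(f)\sii \phi\nmid_\mu f$ for nonzero $f\in\kx$. By transitivity through the chain, $\mu_{i-1}\le \mu<v_F$ for every $i$. The secondary workhorse is Lemma~\ref{stable}, which I plan to iterate upward through the chain in order to transport values from level $i$ all the way to level $r$. The key observation is that in an optimal chain the degrees strictly increase, $\deg(\phi_0)<\cdots<\deg(\phi_r)$; combined with $\mu_j$-minimality of each $\phi_{j+1}$, this guarantees $\phi_{j+1}\nmid_{\mu_j}f$ whenever $\deg(f)<\deg(\phi_i)$ and $j\ge i$.

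To compute $v_F(\phi_i)$ for $0\le i<r$, I first note $\deg(\phi_i)<\deg(\phi_r)\le\deg(\phi)$: the first inequality by optimality, the second because $\phi_r$ is a key polynomial of minimal degree for $\mu$ by Proposition~\ref{extension}(b). The $\mu$-minimality of $\phi$ then forces $\phi\nmid_\mu\phi_i$, and Theorem~\ref{computation} gives $v_F(\phi_i)=\mu(\phi_i)$. Starting from $\mu_i(\phi_i)=\ga_i$ and iterating Lemma~\ref{stable} along the tail of the chain yields $\mu(\phi_i)=\ga_i$, hence $v_F(\phi_i)=\ga_i$. The last assertion of the lemma, under the extra hypothesis $\phi\nmid_\mu\phi_r$, is then immediate from Theorem~\ref{computation}, since $\mu(\phi_r)=\ga_r$ directly.

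For the divisibility $\phi_i\mid_{\mu_{i-1}}F$ with $0<i\le r$, the plan is to verify that $\phi_i$ has minimal degree among monic polynomials $f\in\kx$ satisfying $\mu_{i-1}(f)<v_F(f)$, and then invoke Lemma~\ref{minDegree}. That $\phi_i$ itself witnesses a strict inequality is clear: $\mu_{i-1}(\phi_i)<\ga_i=\mu_i(\phi_i)\le\mu(\phi_i)\le v_F(\phi_i)$. For minimality, take any monic $f$ with $\deg(f)<\deg(\phi_i)$. Since $\phi_i$ is a key polynomial for $\mu_{i-1}$, it is $\mu_{i-1}$-minimal, so $\phi_i\nmid_{\mu_{i-1}}f$; iterating Lemma~\ref{stable} through the tail of the chain as above gives $\mu_{i-1}(f)=\mu(f)$. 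Moreover $\deg(f)<\deg(\phi)$, so $\phi\nmid_\mu f$ and Theorem~\ref{computation} yields $\mu(f)=v_F(f)$. Combining, $\mu_{i-1}(f)=v_F(f)$, ruling out a strict inequality for $f$. Applying the last assertion of Lemma~\ref{minDegree} with $f=F$ (noting $\mu_{i-1}(F)<\infty=v_F(F)$) then concludes $\phi_i\mid_{\mu_{i-1}}F$.

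The main technical point---the one I expect to handle most carefully---is the iterated application of Lemma~\ref{stable} along the upper tail of the chain. It depends essentially on the optimality assumption: the strict increase $\deg(\phi_i)<\deg(\phi_{i+1})<\cdots<\deg(\phi_r)$ is exactly what guarantees that each $\phi_{j+1}$, being $\mu_j$-minimal, cannot $\mu_j$-divide a polynomial of degree below $\deg(\phi_i)$. The rest is bookkeeping.
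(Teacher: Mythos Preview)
Your argument is correct. The first half (computing $v_F(\phi_i)=\ga_i$ and the extra case $\phi\nmid_\mu\phi_r$) matches the paper's proof: both use Theorem~\ref{computation} together with the degree inequality $\deg(\phi_i)<\deg(\phi_r)\le\deg(\phi)$ to get $\phi\nmid_\mu\phi_i$. One minor comment: the identity $\mu(\phi_i)=\ga_i$ is already recorded in the paper as a general fact about (not necessarily optimal) MacLane chains, obtained by a \emph{single} invocation of Lemma~\ref{stable} with the MacLane-chain hypothesis $\phi_{i+1}\nmid_{\mu_i}\phi_i$; the iteration you describe, and the strict degree growth from optimality, are not needed here.

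For the divisibility $\phi_i\mid_{\mu_{i-1}}F$ you take a genuinely different route. The paper dispatches this in one line using Theorem~\ref{fundamental}: since $\phi_i\in\op{KP}(\mu_{i-1})$, that theorem gives $\phi_i\mid_{\mu_{i-1}}F\iff \mu_{i-1}(\phi_i)<v_F(\phi_i)$, and the chain of inequalities $\mu_{i-1}(\phi_i)<\ga_i=\mu(\phi_i)\le v_F(\phi_i)$ finishes it. You instead go back to Lemma~\ref{minDegree}, verifying that $\phi_i$ has minimal degree among monic $f$ with $\mu_{i-1}(f)<v_F(f)$ and then reading off $\phi_i\mid_{\mu_{i-1}}F$ from the last clause. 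This works, but it amounts to re-proving, for the pair $(\mu_{i-1},v_F)$, the direction of Theorem~\ref{fundamental} that the paper invokes directly; once that theorem is available your minimality check is redundant.
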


\begin{proof}
By Theorem \ref{computation}, $\mu<v_F$ and $\ga_i=\mu(\phi_i)=v_F(\phi_i)$ for all $0\le i<r$. In fact, $\phi\nmid_\mu \phi_i$, because $\deg(\phi_i)<\deg(\phi_r)\le\deg(\phi)$.
Also, $\ga_r=\mu(\phi_r)=v_F(\phi_r)$, if  $\phi\nmid_\mu \phi_r$.

By Theorem \ref{fundamental}, $\phi_i\mid_{\mu_{i-1}}F$ for  $0< i\le r$, since $\mu_{i-1}(\phi_i)<\ga_i=\mu(\phi_i)\le v_F(\phi_i)$.
\end{proof}

\begin{theorem}\label{MLOk}
Let $\mu$ be an inductive valuation admitting an optimal MacLane chain as in (\ref{depth5}).
Then, all $\phi\in\kpm$ with $\deg(\phi)>1$ are Okutsu polynomials, and
\begin{enumerate}
\item[(1)] If $\deg(\phi)>\deg(\phi_r)$, then $[\phi_0,\dots,\phi_r]$ is an Okutsu frame of $\phi$. 
\item[(2)] If $\deg(\phi)=\deg(\phi_r)$, then $[\phi_0,\dots,\phi_{r-1}]$ is an Okutsu frame of $\phi$.
\end{enumerate}
Moreover, $w(\phi)=w(\mu)$ in the first case, and $w(\phi)=w(\mu_{r-1})$ in the second case.  
\end{theorem}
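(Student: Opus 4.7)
The plan is to verify directly that the prescribed list satisfies the defining conditions of an Okutsu frame of $\phi$; this will simultaneously prove that $\phi$ is an Okutsu polynomial and identify $w(\phi)$. Fix a root $\theta\in Z(\phi)$, so that $v(g(\theta))=v_\phi(g)$ for every $g\in \kx$. The essential technical move is to express $v(g(\theta))$ as $\mu_i(g)$ whenever $g$ has sufficiently small degree, and then bound the latter via Theorem \ref{bound} applied at the intermediate valuation $\mu_i$.

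Fix an index $i$ with $0\le i<r$, and let $g\in\kx$ be a monic polynomial with $0<\deg(g)<\deg(\phi_{i+1})$. Since $\phi_{i+1}$ is $\mu_i$-minimal, we have $\phi_{i+1}\nmid_{\mu_i}g$, and Lemma \ref{stable} yields
$$\mu_i(g)=\mu_{i+1}(g)=\cdots=\mu_r(g)=\mu(g).$$
Because $\phi$ is $\mu$-minimal and $\deg(g)<\deg(\phi)$, formula (\ref{mu<vphi}) further gives $\mu(g)=v_\phi(g)=v(g(\theta))$, and Theorem \ref{bound} applied to $\mu_i$ produces
$$\frac{v(g(\theta))}{\deg(g)}=\frac{\mu_i(g)}{\deg(g)}\le w(\mu_i)=\frac{\gamma_i}{\deg(\phi_i)},$$
with equality at $g=\phi_i$ (which is a key polynomial of minimal degree for $\mu_i$). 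Hence $w_i(\phi)=w(\mu_i)$ attains the maximum of $v(g(\theta))/\deg(g)$ over all monic $g$ of positive degree less than $\deg(\phi_{i+1})$, and the strict increase $w_i(\phi)<w_{i+1}(\phi)$ is inherited from the strict growth of MacLane weights noted just after the chain (\ref{depth}).

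It remains to handle the topmost level of the Okutsu frame. In Case 1, set $\phi_{r+1}:=\phi$ and apply the argument at $i=r$: for monic $g$ with $0<\deg(g)<\deg(\phi)$, the $\mu$-minimality of $\phi$ gives $\mu(g)=v(g(\theta))$, and Theorem \ref{bound} at $\mu_r=\mu$ directly yields $v(g(\theta))/\deg(g)\le w(\mu)=w_r(\phi)$; the strict inequality $w_r(\phi)<w_{r+1}(\phi)=\infty$ is automatic, and we read $w(\phi)=w(\mu)$. In Case 2, the Okutsu frame has length $r-1$ and $\phi$ itself plays the role of the topmost polynomial at level $i=r-1$; here $\deg(g)<\deg(\phi)=\deg(\phi_r)$, so the $\mu_{r-1}$-minimality of the MacLane polynomial $\phi_r$ together with Lemma \ref{stable} yields $\mu_{r-1}(g)=\mu_r(g)=\mu(g)=v(g(\theta))$, and Theorem \ref{bound} at $\mu_{r-1}$ produces $w(\phi)=w_{r-1}(\phi)=w(\mu_{r-1})$. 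The principal technical obstacle is the bookkeeping around Lemma \ref{stable}: one must keep careful track of which valuation $\mu_i$ is being invoked and of why the non-divisibility hypothesis $\phi_{i+1}\nmid_{\mu_i}g$ holds at each step, but this always reduces to $\mu_i$-minimality of $\phi_{i+1}$ together with the degree inequality $\deg(g)<\deg(\phi_{i+1})$.
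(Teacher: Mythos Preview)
Your proof is correct and in fact cleaner than the paper's. Both arguments rest on the same two ingredients: equation~(\ref{mu<vphi}) to identify $v(g(\theta))$ with a $\mu$-value, and Theorem~\ref{bound} to bound the weighted value by $w(\mu_i)$. The difference lies in how the intermediate levels are handled.

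The paper argues first that $\phi_r,\phi$ is a distinguished pair (using (\ref{mu<vphi}) at $\mu=\mu_r$), then separately that each $\phi_i,\phi_{i+1}$ is a distinguished pair (same argument applied to $\phi_{i+1}$ in place of $\phi$). This produces information about the sets $W_m(\phi_{i+1})$, not $W_m(\phi)$, so a transfer step is needed: the paper invokes Lemma~\ref{lower} and Proposition~\ref{samevalue} to show $v(g(\alpha_{i+1}))=v(g(\alpha))$ for $\alpha\in Z(\phi)$, $\alpha_{i+1}\in Z(\phi_{i+1})$, whence $W_{\deg(\phi_{i+1})}(\phi)=W_{\deg(\phi_{i+1})}(\phi_{i+1})$.

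You bypass this transfer entirely. Instead of comparing roots of different polynomials via Proposition~\ref{samevalue} (which in turn relies on the non-trivial Lemma~\ref{averageToSingle} about roots in $\overline K$), you use Lemma~\ref{stable} to propagate the value $\mu_i(g)$ all the way up to $\mu_r(g)=\mu(g)$, and then a single application of (\ref{mu<vphi}) at $\mu$ gives $\mu(g)=v(g(\theta))$ directly. This is more elementary: it stays entirely within the graded-algebra formalism of the MacLane chain and never touches roots of the intermediate $\phi_i$. The only minor point left implicit is that the frame conditions~(\ref{frameDeg})--(\ref{frame}) you verify do force every $W_m(\phi)$ to have a maximum (namely $w_i(\phi)$ for $\deg(\phi_i)<m\le\deg(\phi_{i+1})$), so $\phi$ is indeed an Okutsu polynomial as claimed.
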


\begin{proof}
Suppose $\deg(\phi)>\deg(\phi_r)$. Then, for all $\al\in Z(\phi)$, equation (\ref{mu<vphi}) shows that  
$$
v(\phi_r(\al))=\mu(\phi_r),\qquad v(g(\al))=\mu(g),
$$ 
for all monic $g\in\kx$ with $\deg(g)<\deg(\phi)$. Also, by Theorem \ref{bound}, 
$$
v(g(\al))/\deg(g)=\mu(g)/\deg(g)\le w(\mu)=\mu(\phi_r)/\deg(\phi_r)=v(\phi_r(\al))/\deg(\phi_r),
$$
and equality holds if and only if $g$ is $\mu$-minimal.

By Proposition \ref{extension}, $\phi_r$ is a key polynomial for $\mu$ of minimal degree. By \cite[Thm. 3.7]{KeyPol}, there are no $\mu$-minimal monic polynomials $g$ with  $\deg(g)<\deg(\phi_r)$.  

Therefore, $\phi_r,\phi$ is a distinguished pair, and $w(\phi)=w(\mu)$.

Since the MacLane chain is optimal, we have $\deg(\phi_{i+1})>\deg(\phi_i)$ for all $0\le i<r$, and this argument shows that $\phi_i,\phi_{i+1}$ is a distinguished pair and $w(\phi_{i+1})=w(\mu_i)$.

On the other hand, take $\al_{i+1}\in Z(\phi_{i+1})$. By Lemma \ref{lower}, the tautology $\phi\mmu \phi$ implies $\phi_{i+1}\mid_{\mu_i}\phi$, and Proposition \ref{samevalue} shows that 
$$
g\in\kx,\ \deg(g)<\deg(\phi_{i+1})\ \imp\ v(g(\al_{i+1}))=v(g(\al)).
$$
Thus, $W_{\deg(\phi_{i+1})}(\phi)$ contains a maximal value and $w_{i+1}(\phi)=w(\phi_{i+1})=w(\mu_i)$. 

This ends the proof of (1).

Suppose $\deg(\phi)=\deg(\phi_r)$. By Lemma \ref{lower}, $\phi_r\mid_{\mu_{r-1}}\phi$, so that  $\phi$ is a key polynomial for $\mu_{r-1}$ by Lemma \ref{mid=sim}.
Since $\deg(\phi)>\deg(\phi_{r-1})$, item (2) follows from the previous argument applied to the optimal MacLane chain of $\mu_{r-1}$ deduced by truncation.
\end{proof}

Conversely, any Okutsu frame of an Okutsu polynomial arises in this way.

\begin{theorem}\label{OkML}
Let $F$ be an Okutsu polynomial, and let $[\phi_0,\dots,\phi_r]$ be an Okutsu frame of  $\phi_{r+1}:=F$. 
For all $0\le i\le r$, the mapping
$$
\mu_i\colon \kx\lra\qg\cup\{\infty\}, \qquad \sum\nolimits_{0\le s}a_s\phi_i^s\ \longmapsto \ \mn\{v_F(a_s\phi_i^s)\mid 0\le s\}
$$
 is a valuation admitting $\phi_{i+1}$ as a key polynomial. 
 
Also, $\mu_r$   admits an optimal MacLane chain  as in (\ref{depth5}), with $\ga_i=v_F(\phi_i)$ for all $i$.
\end{theorem}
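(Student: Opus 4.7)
The strategy is induction on $i=0,1,\dots,r$; extending the list with $\phi_{r+1}:=F$ and $\ga_{r+1}:=v_F(F)=\infty$, the inductive statement $\mathrm{IH}_i$ asserts that (a) $\mu_i$ as defined is a valuation equal to the augmentation $[\mu_{i-1};\phi_i,\ga_i]$ (interpreting the case $i=0$ as $\mu_0=\mu_0(\phi_0,\ga_0)$, the augmentation of $\minf$); (b) $\mu_i\le v_F$; (c) $\phi_{i+1}$ is a key polynomial for $\mu_i$, and $\mu_i(g)=v_F(g)$ for every $g\in\kx$ with $\deg(g)<\deg(\phi_{i+1})$. The case $i=r$ of part (c) yields in particular that $F$ is a key polynomial for $\mu_r$, while the chain $\minf\to\mu_0\to\cdots\to\mu_r$ automatically satisfies $\phi_i\nmid_{\mu_{i-1}}\phi_{i-1}$ because $\phi_i$ is $\mu_{i-1}$-minimal of degree strictly greater than $\deg(\phi_{i-1})$, so it is an optimal MacLane chain with the prescribed slopes $\ga_i=v_F(\phi_i)$.

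The base case $\mathrm{IH}_0$(a),(b) is essentially definitional: the formula $\mu_0(\sum a_s\phi_0^s)=\mn\{v(a_s)+s\ga_0\mid s\ge 0\}$ literally matches $\mu_0(\phi_0,\ga_0)$, and the ultrametric inequality $v_F(f)\ge\mn\{v_F(a_s\phi_0^s)\mid s\ge 0\}$ gives (b). The engine used to establish (c) at every level (including $i=0$) is the following uniform argument. By Theorem \ref{bound} applied to the minimal-degree key polynomial $\phi_i$ of $\mu_i$, $w(\mu_i)=\ga_i/\deg(\phi_i)=w_i(F)$; together with $w_i(F)<w_{i+1}(F)$ from the frame,
\[
\mu_i(\phi_{i+1})\le\deg(\phi_{i+1})\,w(\mu_i)=\deg(\phi_{i+1})\,w_i(F)<\deg(\phi_{i+1})\,w_{i+1}(F)=v_F(\phi_{i+1}),
\]
so $\mu_i<v_F$ strictly. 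Lemma \ref{minDegree} then produces a key polynomial $\phi^*$ for $\mu_i$ of minimal degree among monic polynomials satisfying $\mu_i(\phi^*)<v_F(\phi^*)$. If $\deg(\phi^*)<\deg(\phi_{i+1})$, then the Okutsu frame bound (\ref{frame}) forces $v_F(\phi^*)\le\deg(\phi^*)\,w_i(F)$, while Theorem \ref{bound} applied to the $\mu_i$-minimal $\phi^*$ gives $\mu_i(\phi^*)=\deg(\phi^*)\,w(\mu_i)=\deg(\phi^*)\,w_i(F)$; combined with (b) these squeeze $\mu_i(\phi^*)=v_F(\phi^*)$, a contradiction. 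Hence $\deg(\phi^*)=\deg(\phi_{i+1})$, so $\phi_{i+1}$ itself realizes the minimum and is a key polynomial for $\mu_i$ by Lemma \ref{minDegree}; the final clause of that lemma then delivers $\mu_i(g)=v_F(g)$ for every $g$ of degree below $\deg(\phi_{i+1})$, since such $g$ is not $\mu_i$-divided by the $\mu_i$-minimal $\phi^*$.

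For the inductive passage $\mathrm{IH}_i\Rightarrow\mathrm{IH}_{i+1}$, part (c) of $\mathrm{IH}_i$ supplies everything needed to verify $\mathrm{IH}_{i+1}$(a): $\phi_{i+1}$ is a key polynomial for $\mu_i$ with $\mu_i(\phi_{i+1})<\ga_{i+1}$, so $[\mu_i;\phi_{i+1},\ga_{i+1}]$ is defined, and substituting $\mu_i(a)=v_F(a)$ for $\deg(a)<\deg(\phi_{i+1})$ into its formula collapses it to $\mn\{v_F(a_s\phi_{i+1}^s)\mid s\ge 0\}$, which is the theorem's $\mu_{i+1}$. Assertion $\mathrm{IH}_{i+1}$(b) is immediate from this formula by the ultrametric inequality for $v_F$. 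The main obstacle is the bridging comparison $\deg(\phi^*)=\deg(\phi_{i+1})$ in the engine above: this is the unique point where the definition of an Okutsu frame enters the argument, and it succeeds because the frame is defined in terms of the very weighted values $v(g(\t))/\deg(g)$ that Theorem \ref{bound} controls on the valuation-theoretic side.
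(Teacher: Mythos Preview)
Your proof is correct and follows essentially the same approach as the paper: both argue by induction, use Lemma~\ref{minDegree} to produce a minimal-degree key polynomial $\phi^*$ for $\mu_i$ from $\mu_i<v_F$, and then pin down $\deg(\phi^*)=\deg(\phi_{i+1})$ by playing Theorem~\ref{bound} (which gives $\mu_i(\phi^*)/\deg(\phi^*)=w(\mu_i)=w_i(F)$) against the Okutsu frame inequality~(\ref{frame}). Your organization of the induction hypothesis into (a), (b), (c) and your contradiction phrasing of the degree comparison are cosmetic variants of the paper's two displayed inequalities (\ref{contrad}) and (\ref{contrad2}).
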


\begin{proof}
The coefficients $a_s\in K$ of any $\phi_0$-expansion satisfy $v_F(a_s)=v(a_s)$. Hence, $\mu_0=\mu_0(\phi_0,\ga_0)$ by the definition of the depth-zero valuations in section \ref{subsecDepth0}. 

Also, we saw that $\mu_0(\phi_0,\ga_0)$ is equivalent to the augmentation $[\minf;\,\phi_0,(0,\ga_0)]$; thus, $\phi_0$ is a key polynomial for $\mu_0$, by Proposition \ref{extension}. 

Now, suppose that for some $0\le i\le r$, we know that $\mu_i$ is a valuation  and $\phi_i$ is a key polynomial for $\mu_i$ such that $\mu_i(\phi_i)=\ga_i$. 

The theorem will follow from a recursive argument if we deduce that $\phi_{i+1}$ is a key polynomial for $\mu_i$ too; and moreover $\mu_{i+1}=[\mu_i;\phi_{i+1},\ga_{i+1}]$ for $i<r$. 

In fact, $\mu_i<v_F$ by the definition of $\mu_i$. Let $\phi\in\kx$ be a monic polynomial of minimal degree such that $\mu_i(\phi)<v_F(\phi)$. By Lemma \ref{minDegree}, $\phi$ is a key polynomial for $\mu_i$.
Theorem \ref{bound} shows that 
\begin{equation}\label{contrad}
\dfrac{v(\phi(\t))}{\deg(\phi)}>\dfrac{\mu_i(\phi)}{\deg(\phi)}=w(\mu_i)=\dfrac{\mu_i(\phi_i)}{\deg(\phi_i)}=
\dfrac{v(\phi_i(\t))}{\deg(\phi_i)}=w_i(F),
\end{equation}
\begin{equation}\label{contrad2}
\dfrac{v(\phi_{i+1}(\t))}{\deg(\phi_{i+1})}=w_{i+1}(F)>w_i(F)=w(\mu_i)\ge
\dfrac{\mu_i(\phi_{i+1})}{\deg(\phi_{i+1})}.
\end{equation}

By (\ref{frame}) and (\ref{contrad}), we have $\deg(\phi)\ge \deg(\phi_{i+1})$. Also, (\ref{contrad2}) implies
$\deg(\phi)\le \deg(\phi_{i+1})$, by the minimality of $\deg(\phi)$.
Hence, $\deg(\phi)=\deg(\phi_{i+1})$.

By Lemma \ref{minDegree}, $\phi_{i+1}$ is a key polynomial for $\mu_i$ (and it is $\mu_i$-equivalent to $\phi$).

Finally, if $i<r$ then $\mu_{i+1}=[\mu_i;\,\phi_{i+1},\ga_{i+1}]$ by the very definition of the augmented valuation.
\end{proof}


\begin{theorem}\label{Ok=D1}
For any prime polynomial $F\in\P$ with $\deg(F)>1$, the following conditions are equivalent:
\begin{enumerate}
\item[(1)] $F$ is the key polynomial of an inductive valuation.
\item[(2)] $F$ is an Okutsu polynomial.
\item[(3)] $F$ is defectless.
\end{enumerate}
\end{theorem}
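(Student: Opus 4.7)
The equivalence (1) $\Leftrightarrow$ (2) falls out directly from the Okutsu/MacLane correspondence of Theorems \ref{MLOk} and \ref{OkML}: in one direction, an optimal MacLane chain of the inductive valuation $\mu$ having $F \in \kpm$ yields an Okutsu frame of $F$ (either the full list $[\phi_0,\dots,\phi_r]$ or its truncation at the penultimate node, depending on whether $F\sim_{\mu_{r-1}}\phi_r$ as key polynomials of $\mu_{r-1}$); in the other direction, any Okutsu frame of $F$ produces the required inductive valuation. The implication (1) $\Rightarrow$ (3) is exactly the defectlessness statement recorded in Section \ref{subsecMLChains} (citing \cite{Vaq2} and \cite[Cor.~5.14]{CompRP}); it also drops out of iterated application of Corollary \ref{efphi} along an optimal MacLane chain.

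The substantive direction is therefore (3) $\Rightarrow$ (1). The plan is iterated approximation of $F$ by key polynomials of inductive valuations, using Lemma \ref{goingup}. Since $\minf<v_F$, Lemma \ref{minDegree} supplies a linear key polynomial $\phi_0$ of $\minf$ with $\minf(\phi_0)<v_F(\phi_0)$; choose $\ga_0\in\qg$ with $\minf(\phi_0)<(0,\ga_0)\le v_F(\phi_0)$ so that $\mu_0=[\minf;\phi_0,\ga_0]$ is a depth-zero inductive valuation satisfying $\phi_0\mid_{\mu_0}F$ (via Theorem \ref{fundamental}). Inductively, given $(\mu_i,\phi_i)$ inductive with $\phi_i\in\op{KP}(\mu_i)$ and $\phi_i\mid_{\mu_i}F$, apply Lemma \ref{goingup} to obtain $\mu_{i+1}=[\mu_i;\phi_i,v_F(\phi_i)]$ together with a proper key polynomial $\phi_{i+1}$ of $\mu_{i+1}$ such that $\phi_{i+1}\mid_{\mu_{i+1}}F$ and $v_F(\phi_{i+1})>v_F(\phi_i)$. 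Corollary \ref{efphiF} gives $d(\phi_i)\mid d(F)=1$, so every $\phi_i$ is itself defectless and
\[
\deg(\phi_i)=e(\phi_i)f(\phi_i)\le e(F)f(F)=\deg(F).
\]
Being non-decreasing (Lemma \ref{goingup}) and bounded, the sequence $\deg(\phi_i)$ stabilizes at some $m\le \deg(F)$.

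The main obstacle is the termination argument: ruling out $m<\deg(F)$. When $m=\deg(F)$, Lemma \ref{mid=sim} applied to $\phi_i\mid_{\mu_i}F$ yields $F\sim_{\mu_i}\phi_i$, exhibiting $F$ as a key polynomial of the inductive valuation $\mu_i$, which is (1). The difficulty lies in the case of stabilization at $m<\deg(F)$: from some index onwards Lemma \ref{goingup} forces $e'=\deg(\psi)=1$ at every step, producing an infinite chain of augmentations with constant $\deg(\phi_i)=m$ and slopes $\ga_i=v_F(\phi_i)$ strictly increasing in $\qg$ --- a \emph{limit augmentation} in the sense of Vaqui\'e. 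The supremum valuation $\mu_\infty=\sup_i\mu_i$ extends $v$, is bounded above by $v_F$, and (since the residue-algebra inclusions $\Delta_{\mu_i}\to\Delta_{\mu_{i+1}}$ stabilize on the field $\kappa$ while only the value groups grow) satisfies $e(\mu_\infty/v)f(\mu_\infty/v)\le m<\deg(F)$; a Vaqui\'e-style analysis \cite{Vaq2} converts this into the strict inequality $e(F)f(F)<\deg(F)$, i.e.\ $d(F)>1$, contradicting (3). This termination step, which is precisely Vaqui\'e's criterion that non-defectless prime polynomials over henselian fields cannot be reached by key polynomials of inductive valuations, is the only substantial input beyond the machinery developed in Sections \ref{secVals}--\ref{secKeyFactors}.
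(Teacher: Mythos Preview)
Your proof is correct and follows essentially the same route as the paper: (1)$\Leftrightarrow$(2) from Theorems \ref{MLOk}--\ref{OkML}, (1)$\Rightarrow$(3) from the cited defectlessness of key polynomials for inductive valuations, and (3)$\Rightarrow$(1) by iterated approximation (your Lemma~\ref{goingup} loop is exactly the paper's ``MacLane chain of arbitrarily large length in $(\minf,v_F)$'') which either reaches $\deg(\phi_i)=\deg(F)$ or stabilizes into a continuous chain, whereupon both you and the paper defer to Vaqui\'e \cite{Vaq2} for the contradiction with defectlessness. One small slip in your initialization: you need $\ga_0<v_F(\phi_0)$ strictly (not $\le$), since $\phi_0\mid_{\mu_0}F$ via Theorem~\ref{fundamental} requires $\mu_0(\phi_0)=\ga_0<v_F(\phi_0)$.
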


\begin{proof}
 By Theorems \ref{MLOk} and \ref{OkML}, items (1) and (2) are equivalent. Also, 
 (1) implies (3) by \cite{Vaq2}, or \cite[Cor. 5.14]{CompRP}.

Finally, the implication (3) $\Rightarrow$ (1) follows from the  results of Vaqui\'e in \cite{Vaq,Vaq2}. In fact, suppose that $F$ is defectless. As shown in section \ref{secKeyFactors}, there exists a MacLane chain, of arbitrarily large length, of inductive valuations  in the interval $(\minf,v_F)\subset \V$:
$$
\minf\stackrel{\phi_0,\ga_0}\lra\  \mu_0\ \stackrel{\phi_1,\ga_1}\lra\  \cdots
\ \stackrel{\phi_{n-1},\ga_{n-1}}\lra\ \mu_{n-1} 
\ \stackrel{\phi_{n},\ga_{n}}\lra\ \mu_{n}
\ \stackrel{\phi_{n+1},\ga_{n+1}}\lra\  \cdots
$$
with key polynomials satisfying $\phi_{n}\mid_{\mu_{n-1}}F$ for all $n$. 

If $\deg(\phi_{n})=\deg(F)$ for some $n$, then Lemma \ref{mid=sim} shows that $F$ is a key polynomial for $\mu_{n-1}$ and we are done.

Otherwise, since $\deg(\phi_n)\mid \deg(\phi_{n+1})$, this degree becomes constant for a sufficiently large $n$. Thus,  we get a \emph{continuous MacLane chain} which may be augmented to a certain \emph{limit augmented} valuation by using a certain \emph{limit key polynomial} \cite{Vaq}.

This limit augmented valuation lies still in the interval $(\minf,v_F)$, but it is no more inductive. The main result of \cite{Vaq2} shows that in this case $F$ has some defect.
\end{proof}

\noindent{\bf Remark. }The implication (3) $\Rightarrow$  (1) may be deduced from the work by Aghigh and Khanduja too, who use the technique of \emph{complete distinguished chains} of algebraic elements over $K$  \cite{CHF}. \e

The next result follows immediately from Theorems \ref{MLOk}, \ref{OkML} and \ref{Ok=D1}.

\begin{corollary}\label{truncFrame}
Let $F$ be a defectless polynomial with $\deg(F)>1$. 

The sequence $[\phi_0,\phi_1,\dots,\phi_r]$ is an Okutsu frame of $\phi_{r+1}=F$ if and only if $\phi_i,\phi_{i+1}$ is a distinguished pair for all $0\le i\le r$. 

In this case, each $\phi_i$ is a defectless polynomial and $[\phi_0,\dots,\phi_{i-1}]$ is an Okutsu frame of $\phi_i$. 
Moreover, $w_i(F)=w(\phi_{i+1})$ for all $1\le i\le r$. 
\end{corollary}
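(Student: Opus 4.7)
The corollary translates between two ways of encoding the same data for a defectless polynomial: an Okutsu frame (intrinsic invariants of $F$ through the weights $w_i(F)$) and a finite chain of distinguished pairs (successive minimal-degree approximants). Since Theorem \ref{Ok=D1} identifies defectless with Okutsu polynomials, and Theorems \ref{OkML} and \ref{MLOk} set up a correspondence between Okutsu frames of $F$ and optimal MacLane chains of inductive valuations having $F$ as a key polynomial, my plan is to read this correspondence through distinguished pairs.

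For the forward direction I would start with an Okutsu frame $[\phi_0,\ldots,\phi_r]$ of $F$ and invoke Theorem \ref{OkML} to produce an optimal MacLane chain terminating in an inductive valuation $\mu_r$ which admits $F$ as key polynomial, with slopes $\ga_i=v_F(\phi_i)$. For each $0\le i\le r$ I would truncate at level $i$: this yields an optimal MacLane chain of the inductive valuation $\mu_i$, of which $\phi_{i+1}$ is a key polynomial of degree strictly greater than $\deg(\phi_i)$. Theorem \ref{MLOk}(1) then asserts that $[\phi_0,\ldots,\phi_i]$ is an Okutsu frame of $\phi_{i+1}$ with $w(\phi_{i+1})=w(\mu_i)$, and rewriting $w(\mu_i)=v_F(\phi_i)/\deg(\phi_i)=w_i(F)$ gives the identity claimed in the ``Moreover'' part. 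Reading that frame, $\phi_i$ is the minimal-degree polynomial reaching the maximum weight $w(\phi_{i+1})$ by conditions (\ref{frameDeg}) and (\ref{frame}), so $(\phi_i,\phi_{i+1})$ is a distinguished pair. For $i\ge 1$, $\phi_i$ is a key polynomial of the inductive valuation $\mu_{i-1}$, hence defectless by Theorem \ref{Ok=D1}.

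For the reverse direction I would induct on $r$. The base case $r=0$ is essentially tautological: a distinguished pair $(\phi_0,F)$ with $\deg(\phi_0)=1$ makes conditions (\ref{frameDeg}) and (\ref{frame}) immediate for $[\phi_0]$. For the inductive step, the hypothesis applied to $(\phi_0,\phi_1),\ldots,(\phi_{r-1},\phi_r)$ would yield that $\phi_r$ is defectless with Okutsu frame $[\phi_0,\ldots,\phi_{r-1}]$; Theorem \ref{OkML} then produces an optimal MacLane chain whose last valuation $\mu_{r-1}$ has $\phi_r$ as a key polynomial. Since $F$ is defectless by hypothesis, Theorem \ref{Ok=D1} gives some Okutsu frame $[\psi_0,\ldots,\psi_s]$ of $F$, and the forward direction supplies distinguished pairs for it. Minimality of degrees in distinguished pairs forces $\deg(\psi_s)=\deg(\phi_r)$, and iterating this comparison step yields $s=r$ and $\deg(\psi_i)=\deg(\phi_i)$ throughout. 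The frame conditions for $[\phi_0,\ldots,\phi_r]$ then follow from those for $[\psi_0,\ldots,\psi_r]$, since the bounds $w_i(F)$ depend only on the degrees $\deg(\phi_i)$ and on $F$ itself.

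The main obstacle will be the comparison step in the reverse direction: the distinguished pair condition is phrased at roots of $\phi_{i+1}$ while the Okutsu frame condition lives at a root of $F$. Bridging this gap will require Proposition \ref{samevalue}, which yields $v(g(\t_{i+1}))=v(g(\t))$ for all $g$ with $\deg(g)<\deg(\phi_{i+1})$ as long as $\phi_{i+1}\mid_{\mu_i}F$ holds for the valuation $\mu_i$ being inductively built. Securing this divisibility at each level of the induction, while simultaneously propagating the MacLane chain produced by Theorem \ref{OkML}, is the technical heart of the argument; once it is in place, the matching of degrees and weights between the two candidate frames of $F$ follows cleanly from the uniqueness of minimal degrees in distinguished pairs.
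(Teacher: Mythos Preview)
Your forward direction is correct and in line with the paper's (very terse) proof, which merely states that the corollary follows from Theorems \ref{MLOk}, \ref{OkML} and \ref{Ok=D1}; your reading of that correspondence via truncation of the optimal MacLane chain is exactly what is intended.

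In the reverse direction there is a genuine circularity. You write that ``the hypothesis applied to $(\phi_0,\phi_1),\ldots,(\phi_{r-1},\phi_r)$ would yield that $\phi_r$ is defectless'', but the inductive hypothesis \emph{requires} $\phi_r$ to be defectless before it can be invoked; it does not produce that fact. The same issue recurs in your last paragraph: you plan to apply Proposition \ref{samevalue} via ``the MacLane chain produced by Theorem \ref{OkML}'' built from the $\phi$-sequence, but Theorem \ref{OkML} takes an Okutsu frame as input, which is precisely what you are trying to establish. The repair is to anchor everything in the MacLane chain coming from the \emph{known} frame $[\psi_0,\ldots,\psi_s]$ of $F$; call its valuations $\nu_0,\ldots,\nu_s$. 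From $\deg(\phi_r)=\deg(\psi_s)$ and $v_F(\phi_r)/\deg(\phi_r)=w(F)>w(\nu_{s-1})$, Lemma \ref{minDegree} makes $\phi_r$ a key polynomial for the inductive valuation $\nu_{s-1}$; hence $\phi_r$ is defectless by Theorem \ref{Ok=D1}, and $\phi_r\mid_{\nu_{s-1}}F$ by Theorem \ref{fundamental}, so Proposition \ref{samevalue} transfers values from roots of $\phi_r$ to roots of $F$. Now your induction applies cleanly (or, equivalently, you can iterate this argument down the $\nu$-chain to match all degrees and verify the frame conditions directly). With this correction your argument goes through.
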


\subsection{Canonical inductive valuation attached to a defectless polynomial}
Let $F$ be a defectless (prime) polynomial of degree greater than one.

With the notation of Theorem \ref{OkML}, $F$ is a strong key polynomial for the valuation $\mu_r$, which is defined in terms of $\phi_r$-expansions and satisfies $$\mu_r(\phi_r)=v_F(\phi_r)=\deg(\phi_r)w(F),$$
since  $\phi_r,F$ is a distinguished pair.

Since $F$ is $\mu_r$-minimal, Lemma \ref{minimal0} shows that the valuation $\mu_r$ may be defined in terms of $F$-expansions. By (\ref{mu<vphi}) and Theorem \ref{bound}, this valuation is determined by
$$
\as{1.3}
\begin{array}{l}
\mu_r(a)=v_F(a),\quad\forall\,a\in\kx\mbox{ with }\deg(a)<\deg(F),\\ \mu_r(F)=\deg(F)\mu_r(\phi_r)/\deg(\phi_r)=\deg(F)w(F). 
\end{array}
$$

Hence, we may define this valuation $\mu_r$ avoiding any mention to $\phi_r$.

\begin{definition}\label{muF}
Let $F$ be a defectless polynomial with $\deg(F)>1$.

The \emph{Okutsu bound} of $F$ is defined as $\ok(F)=\deg(F)\, w(F)\in\qg$.

There is a canonical inductive valuation $\mu_F$ admitting $F$ as a strong key polynomial, 
determined by the following action on $F$-expansions:
$$
f=\sum\nolimits_{0\le s}a_sF^s\ \imp\ \mu_F(f)=\mn\left\{v_F(a_s)+s\,\ok(F)\mid 0 \le s\right\}. 
$$
\end{definition}


The polynomial $F$ is a key polynomial for infinitely many inductive valuations. Among them, the valuation $\mu_F$ is distinguished by the fact of being minimal.

\begin{lemma}\label{muFval}
Let $F\in\P$ be a defectless polynomial with $\deg(F)>1$. 
\begin{enumerate}
\item The Okutsu depth of $F$ is equal to the MacLane depth of $\mu_F$.
\item If $\phi,F$ is a distinguished pair, then $\phi$ is a key polynomial for $\mu_F$ of minimal degree.
\item The interval $(\mu_F,v_F)\subset\V$ consists of all augmentations
$$
\mu=[\mu_F;\,F,\ga],\qquad \ga\in (\ok(F),\infty)\subset \qg.
$$
\item The valuation $\mu_F$ is the minimal element in the interval  $(\minf,v_F)$ which admits $F$ as a key polynomial.
\end{enumerate}
\end{lemma}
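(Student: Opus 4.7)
The plan is to prove (1)--(4) in order, leveraging the correspondence between Okutsu frames of $F$ and optimal MacLane chains of $\mu_F$ provided by Theorems \ref{MLOk} and \ref{OkML}, together with Lemmas \ref{minDegree}, \ref{minimal0} and \ref{shareKP}.

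For (1), I start from any Okutsu frame $[\phi_0,\dots,\phi_r]$ of $F$, so that $r$ is the Okutsu depth. Theorem \ref{OkML} builds from this frame an inductive valuation $\mu_r$ endowed with an optimal MacLane chain of length $r$, having $F$ as a key polynomial. Comparing with Definition \ref{muF}, and using Lemma \ref{minimal0} to re-express $\mu_r$ via $F$-expansions (legitimate because $F$ is $\mu_r$-minimal), I identify $\mu_r=\mu_F$. The conclusion then follows because Proposition \ref{unicity} makes the length of any optimal MacLane chain of $\mu_F$ intrinsic. For (2), given a distinguished pair $(\phi,F)$, I note that $\deg(\phi)=\deg(\phi_r)$ for every Okutsu frame of $F$: both are the minimal degree of a monic polynomial of degree $<\deg(F)$ achieving the weighted $v_F$-value $w(F)$. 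The replacement $[\phi_0,\dots,\phi_{r-1},\phi]$ therefore still satisfies the defining conditions (\ref{frameDeg}) and (\ref{frame}) of an Okutsu frame, with the top weight $w_r$ still equal to $w(F)$. Applying Theorem \ref{OkML} to the new frame produces an inductive valuation that once again equals $\mu_F$ by the identification in (1), and in its accompanying optimal MacLane chain $\phi$ plays the role of the final key polynomial, so Proposition \ref{extension}(b) makes $\phi$ a key polynomial of $\mu_F$ of minimal degree.

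For (3), the forward direction hinges on showing that $F$ is $\mu$-minimal for every $\mu\in(\mu_F,v_F)$. If some $g\in\kx$ of degree $<\deg(F)$ satisfied $F\mmu g$, there would exist $h\in\kx$ with $\mu(g-Fh)>\mu(g)$; but $(g-Fh)(\theta)=g(\theta)$ gives $v_F(g-Fh)=v_F(g)=\mu(g)$, and $\mu\le v_F$ forces $\mu(g-Fh)\le\mu(g)$, a contradiction. Once $F$ is $\mu$-minimal, Lemma \ref{minimal0} determines $\mu$ on $F$-expansions by the min rule; the sandwich $\mu_F\le\mu\le v_F$ together with $\mu_F=v_F$ on polynomials of degree $<\deg(F)$ forces $\mu=\mu_F$ on the expansion coefficients, so $\mu=[\mu_F;F,\mu(F)]$, and $\mu(F)>\ok(F)$ must hold (otherwise $\mu=\mu_F$). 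Conversely, for $\gamma>\ok(F)$, Proposition \ref{extension} shows that $\mu'=[\mu_F;F,\gamma]$ is a valuation strictly above $\mu_F$, and a term-wise comparison on $F$-expansions yields $\mu'\le v_F$ with strict inequality witnessed at $F$.

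For (4), I invoke Theorem \ref{totord} to see that any $\nu\in(\minf,v_F)$ admitting $F$ as a key polynomial is comparable with $\mu_F$. If $\nu<\mu_F$, then Lemma \ref{shareKP}(2) forces $\mu_F=[\nu;F,\ok(F)]$, whereupon Proposition \ref{extension}(b) would make $F$ a minimal-degree key polynomial of $\mu_F$, contradicting the hypothesis that $F$ is a \emph{strong} key polynomial of $\mu_F$. Hence $\nu\ge\mu_F$. The main obstacle I anticipate is the $\mu$-minimality step in (3); once that is in place, the remaining pieces slot in routinely via Lemmas \ref{minimal0} and \ref{shareKP} and Proposition \ref{extension}.
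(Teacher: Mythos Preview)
Your proof is correct and follows essentially the same strategy as the paper. The only notable variation is in item (3): where the paper observes that $F$ is a monic polynomial of minimal degree with $\mu(F)<v_F(F)$ and then invokes Lemma \ref{minDegree} (to get $F\in\op{KP}(\mu)$) followed by Lemma \ref{shareKP} (to identify $\mu$ as an augmentation of $\mu_F$), you instead prove the $\mu$-minimality of $F$ by a direct contradiction argument and then read off the augmentation formula via Lemma \ref{minimal0}. Both routes rest on the same sandwich $\mu_F(a)=\mu(a)=v_F(a)$ for $\deg(a)<\deg(F)$; the paper's path is slightly shorter, while yours is more self-contained. Items (1), (2), and (4) match the paper's reasoning almost verbatim.
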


\begin{proof}
Items (1) and (2) are an immediate consequence of Theorem \ref{OkML}.

Let $\mu\in\V$ be any valuation such that $\mu_F<\mu<v_F$. For any $a\in\kx$ with $\deg(a)<\deg(F)$ we have $\mu_F(a)=\mu(a)=v_F(a)$. 

Hence, $F$ is a monic polynomial of minimal degree satisfying $\mu(F)<v_F(F)=\infty$.
By Lemma \ref{minDegree}, $F$ is a key polynomial for $\mu$. By Lemma \ref{shareKP}, 
$\mu=[\mu_F;\,F,\ga]$ for $\ga=\mu(F)>\mu_F(F)=\ok(F)$. This proves item (3).

If a valuation $\mu<\mu_F$ admits $F$ as a key polynomial, then
 $\mu_F=[\mu;\, F,\ok(F)]$ by Lemma \ref{shareKP}. This contradicts item (2), because $F$ would be a key polynomial for $\mu_F$ of minimal degree. 
 Since
the interval  $(\minf,v_F)$ is totally ordered (Theorem \ref{totord}), this argument ends the proof of item (4).
 \end{proof}

\begin{corollary}\label{allinductive}
Let $F\in\P$ be a defectless polynomial with $\deg(F)>1$. Then, all valuations in 
 $(\minf,v_F)$ are inductive.
\end{corollary}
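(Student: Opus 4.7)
The plan is to combine the explicit description of the interval $(\mu_F,v_F)$ given by Lemma \ref{muFval}(3) with an analogous analysis on each sub-interval carved out by an optimal MacLane chain of $\mu_F$.

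First I would fix an optimal MacLane chain
$$\minf\stackrel{\phi_0,\ga_0}\lra\mu_0\stackrel{\phi_1,\ga_1}\lra\cdots\stackrel{\phi_r,\ga_r}\lra\mu_r=\mu_F,$$
whose existence follows from Lemma \ref{muFval}(1) together with Theorem \ref{OkML}, and set $\mu_{-1}:=\minf$. By Theorem \ref{totord}, the interval $(\minf,v_F)\subset\V$ is totally ordered, so any $\mu$ in this interval is comparable with every $\mu_i$. If $\mu=\mu_i$ for some $i$ then $\mu$ is inductive by construction; if $\mu\ge\mu_F$, Lemma \ref{muFval}(3) forces $\mu=\mu_F$ or $\mu=[\mu_F;F,\ga]$, both inductive. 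Otherwise there is a unique index $0\le i\le r$ with $\mu_{i-1}<\mu<\mu_i$ strictly, and the task reduces to showing that such a $\mu$ is inductive.

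For this remaining case I would imitate the proof of Lemma \ref{muFval}(3). Because $\mu_i=[\mu_{i-1};\phi_i,\ga_i]$, the definition of the augmentation gives $\mu_{i-1}(a)=\mu_i(a)$ for every $a\in\kx$ with $\deg(a)<\deg(\phi_i)$, and the sandwich $\mu_{i-1}\le\mu\le\mu_i$ immediately forces $\mu(a)=\mu_{i-1}(a)$ on the same range. Assuming $\mu(\phi_i)=\ga_i$ and expanding any $f\in\kx$ in its $\phi_i$-adic series, one deduces $\mu\ge\mu_i$, contradicting $\mu<\mu_i$; hence $\mu(\phi_i)<\ga_i$. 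Therefore $\phi_i$ is a monic polynomial of minimal degree satisfying $\mu(\phi_i)<\mu_i(\phi_i)$, and Lemma \ref{minDegree} applied to $\mu<\mu_i$ yields $\phi_i\in\op{KP}(\mu)$.

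Now $\phi_i$ is a key polynomial common to $\mu_{i-1}$ and $\mu$, so Lemma \ref{shareKP} applies. The equality $\mu_{i-1}(\phi_i)=\mu(\phi_i)$ would force $\mu_{i-1}=\mu$, against the strict inequality, so $\mu_{i-1}(\phi_i)<\mu(\phi_i)$ and
$$\mu=[\mu_{i-1};\phi_i,\,\mu(\phi_i)].$$
For $i\ge1$, $\mu_{i-1}$ is inductive by construction and $\mu$ is an augmentation of it, hence inductive. For $i=0$ the same Lemma \ref{shareKP} applied with $\minf$ as the lower valuation, using the embedding $\qg\hookrightarrow\Z\times\qg$ from section \ref{subsecDepth0}, identifies $\mu$ as a depth-zero augmentation of $\minf$, again inductive. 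The most delicate point I expect is the bookkeeping at the boundary $i=0$, since $\minf$ lives in a lexicographic group rather than in $\qg$; but Lemma \ref{shareKP} is stated in precisely the generality needed for this endpoint to go through with no modification.
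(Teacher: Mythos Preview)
Your proposal is correct and follows essentially the same route as the paper: split the interval at the nodes of an optimal MacLane chain of $\mu_F$, handle $\mu>\mu_F$ via Lemma~\ref{muFval}(3), and for $\mu_{i-1}<\mu<\mu_i$ show that $\phi_i\in\kpm$ and then invoke Lemma~\ref{shareKP} to realise $\mu$ as an augmentation of $\mu_{i-1}$. The only cosmetic difference is that the paper applies Lemma~\ref{minDegree} to the pair $\mu<v_{\phi_i}$ rather than to $\mu<\mu_i$, and it does not single out the endpoint $i=0$; your explicit treatment of that boundary case is a harmless elaboration.
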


\begin{proof}
Take $\mu\in(\minf,v_F)\subset\V$. Since $\mu_F$ is inductive and $(\minf,\mu_F)$ is totally ordered, we need only to discuss the cases $\mu<\mu_F$ and $\mu>\mu_F$.

If $\mu>\mu_F$, then $\mu$ is inductive because it is an augmentation of $\mu_F$ (Lemma \ref{muFval}). 

If $\mu<\mu_F$, consider a MacLane chain of $\mu_F=\mu_r$ as in Theorem \ref{OkML}. 

If we agree that $\mu_{-1}=\minf$, our valuation $\mu$ fits into
$$
\mu_{i-1}<\mu\le \mu_i=[\mu_{i-1};\phi_i,\ga_i],\quad 0\le i< r.
$$
Since $\mu\le\mu_i<v_{\phi_i}$, Lemma \ref{minDegree} shows that $\phi_i\in \kpm$, because it is a monic polynomial of minimal degree satisfying   $\mu(\phi_i)<v_{\phi_i}(\phi_i)=\infty$. 

Hence, $\mu$ is an augmentation of $\mu_{i-1}$ by Lemma \ref{shareKP}.
\end{proof}

\subsection{Strong types parameterize defectless prime polynomials}\label{subsecTypesDless}
Let us denote by $\Pless$ the set of all defectless prime polynomials in $\kx$ of degree greater than 1.

\begin{lemma}\label{muFmu}
Let $\mu$ be an inductive valuation and $F\in\Pless$. 

Then, $\mu=\mu_F$ if and only if $F$ is a strong key polynomial for $\mu$.
\end{lemma}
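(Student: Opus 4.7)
The forward direction is essentially built into Definition \ref{muF}, which asserts (and Theorem \ref{OkML} together with Lemma \ref{muFval}(2) confirms) that $\mu_F$ admits $F$ as a strong key polynomial: any distinguished pair $\phi,F$ gives a key polynomial $\phi$ for $\mu_F$ of minimal degree, and $\deg(\phi)<\deg(F)$, so $F$ is strong. Hence if $\mu=\mu_F$, then $F$ is a strong key polynomial for $\mu$.

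For the converse, assume $F$ is a strong key polynomial for $\mu$. First I would observe that, since $F\in\kpm$, the semivaluation $v_F$ attached to $F$ satisfies $\mu<v_F$: indeed this is the content of the remarks around equation \eqref{mu<vphi} in section \ref{subsecvF}, which identify $v_F$ with the semivaluation induced by $v_{\mu,F}$ and show that $\mu(f)\le v_F(f)$ with strict inequality exactly when $F\mmu f$. Thus $\mu$ belongs to the interval $(\minf,v_F)\subset\V$, and this interval is totally ordered by Theorem \ref{totord}.

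Next I would invoke Lemma \ref{muFval}(4), which says that $\mu_F$ is the minimal valuation in $(\minf,v_F)$ admitting $F$ as a key polynomial. Since $\mu$ is another such valuation, the total order forces $\mu\ge\mu_F$.

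The main step—and the only one where the strong hypothesis is used—is to rule out $\mu>\mu_F$. If this strict inequality held, then by Lemma \ref{muFval}(3) we would have $\mu=[\mu_F;\,F,\ga]$ for some $\ga\in(\ok(F),\infty)$. But Proposition \ref{extension}(b) then tells us that $F$ is a key polynomial for $\mu$ of minimal degree, contradicting the fact that $F$ is strong for $\mu$ (which requires $\deg(F)$ to exceed the minimal degree of key polynomials for $\mu$). Therefore $\mu=\mu_F$, completing the proof. The only obstacle worth flagging is making sure the comparison $\mu<v_F$ is justified cleanly from the earlier material, so that Lemma \ref{muFval} is applicable; everything else is a direct appeal to the structural results already assembled.
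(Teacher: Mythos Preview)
Your proof is correct but takes a genuinely different route from the paper's. The paper argues the converse in one line via the type machinery of section~\ref{secTypes}: since $F$ is a strong key polynomial for both $\mu$ and $\mu_F$, it is a common representative of the strong types $(\mu,\rrm(F))$ and $(\mu_F,\rr_{\mu_F}(F))$, and Lemma~\ref{strong=good} immediately gives $\mu=\mu_F$. You instead work directly with the order structure of the interval $(\minf,v_F)$: Lemma~\ref{muFval}(4) gives $\mu\ge\mu_F$, and the case $\mu>\mu_F$ is ruled out because Lemma~\ref{muFval}(3) together with Proposition~\ref{extension}(b) would force $F$ to be a key polynomial for $\mu$ of \emph{minimal} degree, contradicting strongness. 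The paper's approach is slicker given that Lemma~\ref{strong=good} is already in place; your approach is a bit more self-contained, avoids the type language entirely, and makes transparent exactly where the strong hypothesis is consumed.
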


\begin{proof}
If $\mu=\mu_F$, then $F$ is obviously a strong key polynomial for $\mu$.

Conversely, if $F\in\kpm$ is strong, then $F$ is a common representative of the two strong types $(\mu,\rrm(F))$, $(\mu_F,\rr_{\mu_F}(F))$. By Lemma \ref{strong=good}, $\mu=\mu_F$.
\end{proof}

\begin{lemma}\label{criteria}
Let $F,G\in\Pless$ be two defectless polynomials of the same degree. The following conditions are equivalent:
\begin{enumerate}
\item[(1)] $v_F(G)>\ok(F)$.
\item[(2)] $F\sim_{\mu_F}G$.
\item[(3)] $\mu_F=\mu_G$ and $\rr(F)=\rr(G)$, where $\rr=\rr_{\mu_F}=\rr_{\mu_G}$.
\end{enumerate}
If they hold, we say that $F$ and $G$ are \emph{Okutsu equivalent} and we write $F\approx G$.
\end{lemma}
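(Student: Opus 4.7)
The plan is to show the cycle $(1)\Leftrightarrow(2)\Leftrightarrow(3)$, with $(1)\Leftrightarrow(2)$ handled by a direct calculation on the $F$-expansion of $G$, and $(2)\Leftrightarrow(3)$ following mechanically from the machinery of strong types developed in Section~\ref{secProperTypes} and the canonical valuation $\mu_F$ constructed in Lemma~\ref{muFval}.

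For $(1)\Leftrightarrow(2)$, the key observation is that since $F$ and $G$ are both monic of the same degree $n$, we can write $G=F+a$ with $a=G-F\in\kx$ of degree strictly less than $n$. Picking any $\t\in Z(F)$, we get $v_F(G)=v(G(\t))=v(a(\t))=v_F(a)$. Moreover, Definition~\ref{muF} guarantees that $\mu_F(b)=v_F(b)$ for every $b\in\kx$ with $\deg(b)<\deg(F)$ (the $F$-expansion of such $b$ is just $b=b$), so $\mu_F(a)=v_F(a)$, while $\mu_F(F)=\ok(F)$ by construction. Now $F\sim_{\mu_F}G$ amounts, via the formulas (\ref{Hmu}), to the single condition $\mu_F(F-G)>\mu_F(F)$, i.e.\ $v_F(a)>\ok(F)$, which is exactly $(1)$.

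For $(2)\Rightarrow(3)$, assume $F\sim_{\mu_F}G$. By Lemma~\ref{muFmu} the polynomial $F$ is a strong key polynomial for $\mu_F$, so in particular $F\mmu G$ with $\mu=\mu_F$; since $G$ is monic and $\deg(G)=\deg(F)$, Lemma~\ref{mid=sim} shows that $G$ is itself a key polynomial for $\mu_F$. Strongness of $G$ for $\mu_F$ is inherited from $F$, because the two key polynomials share the same degree, which exceeds the minimal degree in $\op{KP}(\mu_F)$. Applying Lemma~\ref{muFmu} in the reverse direction to $G$ yields $\mu_G=\mu_F$. Finally, Corollary~\ref{samefiber} converts the $\mu_F$-equivalence $F\sim_{\mu_F}G$ into the equality $\rr_{\mu_F}(F)=\rr_{\mu_F}(G)$, giving $(3)$. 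The reverse implication $(3)\Rightarrow(2)$ is immediate from the same Corollary~\ref{samefiber}, since $F$ and $G$ both lie in $\op{KP}(\mu_F)$.

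No step presents a real obstacle; the only mildly subtle point is verifying that $G$ qualifies as a strong key polynomial for $\mu_F$ in the implication $(2)\Rightarrow(3)$, which requires remembering that ``strong'' is a condition on the degree alone, so it transfers from $F$ to any $G$ of equal degree in $\op{KP}(\mu_F)$. Everything else is a direct unpacking of the definitions of $\sim_{\mu_F}$, $\ok(F)$, and the residual ideal operator $\rr$.
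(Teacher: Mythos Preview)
Your proof is correct and follows essentially the same route as the paper's own argument: the equivalence $(1)\Leftrightarrow(2)$ via the identity $\mu_F(F-G)=v_F(G)$ and $\mu_F(F)=\ok(F)$, then $(2)\Rightarrow(3)$ by Lemma~\ref{mid=sim} to make $G$ a strong key polynomial for $\mu_F$, Lemma~\ref{muFmu} to get $\mu_F=\mu_G$, and Corollary~\ref{samefiber} for $\rr(F)=\rr(G)$, with $(3)\Rightarrow(2)$ again by Corollary~\ref{samefiber}. The only cosmetic point is that the reference to~(\ref{Hmu}) is not quite the right pointer for the characterization $f\sim_\mu g\iff\mu(f-g)>\mu(f)$; that equivalence comes straight from the definition of $H_\mu$ rather than from the displayed identities in~(\ref{Hmu}).
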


\begin{proof}
Since $\deg(F-G)<\deg(F)$, by the definition of $\mu_F$ we have 
$$\mu_F(F-G)=v_F(F-G)=v_F(G).$$ Since $\ok(F)=\mu_F(F)$, items (1) and (2) are equivalent.

Suppose $F\sim_{\mu_F}G$. Lemma \ref{mid=sim} shows that $G$ is a strong key polynomial for $\mu_F$. By Lemma \ref{muFmu}, $\mu_F=\mu_G$ and Corollary \ref{samefiber} shows that $\rr(F)=\rr(G)$. 

Hence, (2) implies (3).
The opposite implication follows from Corollary \ref{samefiber}.
\end{proof}

The symmetry of condition (3) shows that $\approx$ is an equivalence relation on the set $\Pless$. 
Let us parameterize the quotient set $\Pless/\!\approx$ by an adequate space. 
\newpage

\begin{theorem}\label{MLspace}
Consider the MacLane space $\M=\left\{(\mu,\ll)\in\Tstr\mid \mu\mbox{ inductive}\right\}$, of strong types based on inductive valuations.
The following mapping is bijective:
$$\M\lra \Pless/\!\approx,\qquad \ty=(\mu,\ll)\longmapsto \rep(\ty). 
$$The inverse map is determined by $F\mapsto \left(\mu_F,\rr_{\mu_F}(F)\right)$.
\end{theorem}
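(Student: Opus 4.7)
The proof is essentially an assembly of previous lemmas, so my plan is to verify cleanly that both maps are well defined and mutually inverse.

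First, I would verify that the assignment $F\mapsto(\mu_F,\rr_{\mu_F}(F))$ factors through $\Pless/\!\approx$ and lands in $\M$. For $F\in\Pless$, Lemma \ref{muFval} says $\mu_F$ is inductive with $F$ as a strong key polynomial, so $\ll_F:=\rr_{\mu_F}(F)$ is a strong maximal ideal by Theorem \ref{Max}, whence $(\mu_F,\ll_F)\in\M$. If $F\approx G$, Lemma \ref{criteria}(3) gives exactly $\mu_F=\mu_G$ and $\rr_{\mu_F}(F)=\rr_{\mu_G}(G)$, so the map is constant on Okutsu classes.

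Next, I would verify that $\ty=(\mu,\ll)\mapsto\rep(\ty)$ takes values in $\Pless/\!\approx$. By Theorem \ref{Max}, $\rep(\ty)$ is a single $\mu$-equivalence class of key polynomials for $\mu$; since $\ll$ is strong, every $\phi\in\rep(\ty)$ is a strong key polynomial, hence has degree $>1$, and because $\mu$ is inductive, Theorem \ref{Ok=D1} implies $\phi\in\Pless$. For any $\phi,\phi'\in\rep(\ty)$, Lemma \ref{muFmu} gives $\mu_\phi=\mu=\mu_{\phi'}$, and $\rr_{\mu_\phi}(\phi)=\ll=\rr_{\mu_{\phi'}}(\phi')$, so Lemma \ref{criteria}(3) yields $\phi\approx\phi'$. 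Conversely, if $\phi\in\rep(\ty)$ and $G\approx\phi$, then $\mu_G=\mu_\phi=\mu$ and $\rr_\mu(G)=\rr_\mu(\phi)=\ll$, so $G\in\rep(\ty)$. Thus $\rep(\ty)$ is exactly one Okutsu class.

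Finally, I would check the two compositions are the identity. Starting from $\ty=(\mu,\ll)\in\M$, pick any $F\in\rep(\ty)$ (nonempty by Theorem \ref{Max}); since $F$ is a strong key polynomial for $\mu$, Lemma \ref{muFmu} forces $\mu_F=\mu$, and then $\rr_{\mu_F}(F)=\rr_\mu(F)=\ll$. Conversely, for $F\in\Pless$, the representative set of $(\mu_F,\rr_{\mu_F}(F))$ contains $F$ tautologically, so it equals the Okutsu class of $F$ by the previous paragraph.

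The only place where I expect to tread carefully is the verification that $\rep(\ty)$ is precisely an Okutsu class (not just contained in one); this is where strongness of $\ll$ is essential, because it is what lets Lemma \ref{muFmu} identify $\mu$ intrinsically from any $\phi\in\rep(\ty)$ as $\mu_\phi$, closing the loop with Lemma \ref{criteria}. Everything else is bookkeeping over the dictionary already built in Sections \ref{secProperTypes}--\ref{secAppr}.
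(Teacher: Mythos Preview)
Your proposal is correct and follows essentially the same approach as the paper: both arguments hinge on Lemma~\ref{muFmu} (to recover $\mu$ as $\mu_\phi$ from any strong representative $\phi$) and Lemma~\ref{criteria} (to identify $\rep(\ty)$ with an Okutsu class). The only cosmetic difference is that the paper deduces injectivity by quoting Theorem~\ref{caractypes} and then checks surjectivity, whereas you verify directly that the two maps are mutual inverses; your organization is slightly more explicit but amounts to the same argument.
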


\begin{proof}
For any $\ty=(\mu,\ll)\in\M$, the set $\rep(\ty)$ is a $\mu$-equivalence class of strong key polynomials for $\mu$. By Lemma \ref{muFmu}, 
\begin{equation}\label{muphi} 
\mu=\mu_\phi,\qquad \forall\,\phi\in\rep(\ty). 
\end{equation}
By Lemma \ref{criteria},  $\rep(\ty)$ is an Okutsu equivalence class. Therefore, the mapping $\M\to \Pless/\!\approx\,$ is well defined.
By Theorem \ref{caractypes}, it is injective. 

Obviously, $F$ is a  representative of the type $\left(\mu_F,\rr_{\mu_F}(F)\right)$; thus, our mapping is bijective.
By (\ref{muphi}), the inverse mapping is determined by $F\mapsto (\mu_F,\rr_{\mu_F}(F))$. 
\end{proof}

By Theorems \ref{MLOk} and \ref{OkML}, two Okutsu equivalent polynomials $F,G\in\Pless$ have the same Okutsu depth, Okutsu frames and
 numerical invariants attached to any optimal MacLane chain of the common inductive valuation $\mu_F=\mu_G$. In particular, as shown in section \ref{subsecMLChains}, they have the same ramification index and residual degree:
$$
e(F)=e(\mu_F)=e(G),\qquad f(F)=f(\mu_F)\deg(R(F))=f(G),
$$
where $R$ is any choice of a residual polynomial operator over $\mu_F$.

Finally, let us shows that two separable Okutsu equivalent polynomials determine two  extensions of $K$ having isomorphic maximal tame subextensions. 


\begin{definition}
A polynomial $F\in\Pless$ is \emph{tame} if the following conditions hold:
\begin{itemize}
\item The finite extension $k_F/k$ is separable.
\item The ramification index $e(F)$ is not divisible by $\chr(K)$.
\end{itemize}
\end{definition}

It is easy to check that a tame $F$ is necessarily separable over $K$. 

A separable algebraic field extension $L/K$ is tame if every $\t\in L$ is defectless and has a tame minimal polynomial over $K$. 

Recall the definition of the tame ramification subgroup:
$$
G\tame=\left\{\sigma\in\gal(K^s/K)\mid \sigma(c)\sim_vc,\quad \forall\, c\in K^s\right\},
$$
where $K^s\subset \kb$ is the separable closure of $K$.

Its fixed field $(K^s)^{G\tame}\subset K^s$ is the unique maximal tame extension of $K$ in $\kb$.
Also, for any algebraic extension $L/K$, the subfield $$L\tame:=L\cap (K^s)^{G\tame}\subset L$$ is the unique maximal tame subextension of $K$ in $L/K$.

\begin{theorem}\label{tame}
 Let $F,G\in\Pless$ be separable and Okutsu equivalent. For $\t\in Z(F)$, take $\om\in Z(G)$ such that
 $ v(\t-\om)=\mx\left\{v(\t-\om')\mid \om'\in Z(G)\right\}$.
 
 Then, $K(\t)\tame= K(\om)\tame$.
\end{theorem}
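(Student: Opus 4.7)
The plan is to prove $K(\theta)\tame\subset K(\omega)\tame$; combined with the equality of degrees, this yields equality inside $\kb$. Since $F\approx G$ gives $\mu_F=\mu_G$ and $\rr_{\mu_F}(F)=\rr_{\mu_F}(G)$ (Lemma \ref{criteria}), the pair $(K(\theta),v)$ and $(K(\omega),v)$ share the same ramification index $e(F)=e(G)$ and the same residue extension $k_F\simeq\kappa_r[y]/R(F)=\kappa_r[y]/R(G)\simeq k_G$. Hence the prime-to-$p$ parts of the ramification indices and the separable parts of the residue degrees coincide, forcing $[K(\theta)\tame\colon K]=[K(\omega)\tame\colon K]$, so a single inclusion will suffice.

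The inclusion will follow from a tame Krasner-type argument applied to $\theta$ and $\omega$. First, from $v_F(G)>\ok(F)=\deg(F)\,w(F)$ and the factorization $v_F(G)=v(G(\theta))=\sum_{\omega'\in Z(G)}v(\theta-\omega')$, a sum of $\deg(G)=\deg(F)$ terms with maximum attained at $\omega'=\omega$, pigeonhole yields $v(\theta-\omega)>w(F)$. Second, for any $\sigma\in\gal(K^s/K(\omega))$ the invariance of $v$ under $\gal(K^s/K)$ on $\kb$, together with $\sigma(\omega)=\omega$, gives $v(\sigma(\theta)-\omega)=v(\sigma(\theta-\omega))>w(F)$; hence $v(\theta-\sigma(\theta))>w(F)$ by the ultrametric inequality.

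The crux is then: every $\sigma\in\gal(K^s/K)$ satisfying $v(\theta-\sigma(\theta))>w(F)$ must fix $K(\theta)\tame$. Granted this, every $\sigma\in\gal(K^s/K(\omega))$ fixes $K(\theta)\tame$, so $K(\theta)\tame\subset K(\omega)$, whence $K(\theta)\tame\subset K(\omega)\tame$. To prove the tame Krasner step, I would factor $F=F_1\cdots F_s$ in $L\tame[x]$ with $L\tame:=K(\theta)\tame$ and $F_1$ the minimal polynomial of $\theta$ over $L\tame$; the roots of $F_1$ are exactly the conjugates $\sigma(\theta)$ with $\sigma|_{L\tame}=\mathrm{id}$. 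It suffices to show: for every ``tame conjugate'' $\theta'$ of $\theta$ (a root of $F_j$ with $j>1$) one has $v(\theta-\theta')\le w(F)$. The key observation is $w(F)=v(\phi_r(\theta))/\deg(\phi_r)$, the final tame jump in the MacLane chain of $\mu_F$: the cyclic quotients $\g_{\mu_i}/\g_{\mu_{i-1}}$ (after taking prime-to-$p$ parts) together with the residue-field tower $\kappa_0\subset\cdots\subset\kappa_r$ (after taking separable parts) generate all of $L\tame/K$, so any conjugate strictly closer to $\theta$ than $w(F)$ must correspond to a wild refinement past $\mu_F$ and hence to $\gal(K^s/L\tame)$.

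The main obstacle is making this last identification rigorous, since $w(F)$ must be shown to be exactly the threshold separating tame from wild conjugates. I would approach this by induction on the Okutsu depth $r$, tracking the tame/wild splitting of each augmentation step of an optimal MacLane chain of $\mu_F$: Lemma \ref{averageToSingle} at each level pins down the unique closest root among $Z(\phi_i)$ to $\theta$, and the prime-to-$p$ part of $e_i=\left(\g_{\mu_i}\colon\g_{\mu_{i-1}}\right)$ together with the separable part of $\kappa_i/\kappa_{i-1}$ precisely accounts for the tame contribution of that level. Alternatively, one could invoke a tame Krasner lemma from the valuation-theoretic literature after verifying its hypotheses via the estimate of paragraph two.
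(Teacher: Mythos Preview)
Your proposal contains a genuine gap at the ``tame Krasner'' step, and the approach is more laborious than necessary.

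The gap is exactly where you locate it: the claim that every $\sigma\in\gal(K^s/K)$ with $v(\theta-\sigma(\theta))>w(F)$ must fix $K(\theta)\tame$. Your sketch via induction on the Okutsu depth and a tame/wild decomposition of each augmentation step is plausible in spirit, but it is not a proof; in particular, the assertion that ``any conjugate strictly closer to $\theta$ than $w(F)$ must correspond to a wild refinement past $\mu_F$'' is precisely the statement to be proven, and nothing in the tools you cite (Lemma \ref{averageToSingle} at each level, the cyclic quotients $\g_{\mu_i}/\g_{\mu_{i-1}}$, the residue tower) directly identifies the threshold $w(F)$ with the boundary between tame and wild conjugates. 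Invoking an external ``tame Krasner lemma'' would also require verifying hypotheses you have not stated.

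The paper's route avoids this entirely, and the key is that your pigeonhole bound $v(\theta-\omega)>w(F)$ is much weaker than what Lemma \ref{averageToSingle} actually gives. Since $G$ is a key polynomial for $\mu_F$ with $G\mid_{\mu_F}F$ (Lemma \ref{criteria}), Lemma \ref{averageToSingle} yields $v(\theta-\beta)<v(\theta-\omega)$ for \emph{every} $\beta\in\kb$ with $\deg_K(\beta)<\deg(G)$. Arguing as in Proposition \ref{samevalue}, this gives $g(\theta)\sim_v g(\omega)$ for all $g\in\kx$ with $\deg(g)<\deg(G)$. Now the Galois argument is one line: if $\sigma$ fixes $\theta$, then for any $c=g(\omega)\in K(\omega)$ one has $\sigma(g(\omega))\sim_v\sigma(g(\theta))=g(\theta)\sim_v g(\omega)$, so $\sigma$ lies in the tame ramification group of $K(\omega)$. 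Hence $K(\omega)\tame\subset K(\theta)$, whence $K(\omega)\tame\subset K(\theta)\tame$; symmetry finishes. No threshold analysis, no Krasner-type lemma, and no degree computation are needed.
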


\begin{proof}
 By Lemma \ref{criteria}, $G$ is a key polynomial for $\mu_F$ and $G\mid_{\mu_F}F$. 
 By Lemma \ref{averageToSingle}, 
 $$
 \beta\in\kb,\quad \deg_K(\beta)<\deg(G)\ \imp\ v(\t-\beta)<v(\t-\om).
 $$
As we argued along the proof of Proposition \ref{samevalue}, this implies 
\begin{equation}\label{g}
g(\t)\sim_v g(\om),\qquad \forall\,g\in\kx\mbox{ with }\deg(g)<\deg(G).
\end{equation}

Let $M/K$ be a finite Galois extension containing $\t$ and $\om$. 
The subfields $K(\t)$ and $K(\om)\tame\subset M$ are the fixed fields of the following subgroups of $\gal(M/K)$:
$$
H_\t=\{\sigma\in H\mid \sigma(\t)=\t\},\qquad
H\tame_\om=\{\sigma\in H\mid \sigma(c)\sim_v c,\quad \forall\,c\in K(\om)\}.
$$

Any $c\in K(\om)$ may be written as $c=g(\om)$ for some $g\in\kx$ with $\deg(g)<\deg(G)$. Hence, $H_\t\subset H\tame_\om$, because for all $\sigma\in H_\t$, (\ref{g}) shows that
$$
\sigma\left(g(\om)\right)\sim_v \sigma\left(g(\t)\right)=g(\t)\sim_v g(\om).
$$

By Galois theory, $K(\om)\tame\subset K(\t)$, and this implies  $K(\om)\tame\subset K(\t)\tame$, by the maximality of $K(\t)\tame$. 
By the symmetry of the argument,  $K(\t)\tame=K(\om)\tame$.
 \end{proof}

\begin{corollary}
Suppose that $F,G\in\Pless$ are separable and $F\approx G$. If $F$ is tame, then  for all $\t\in Z(F)$ there exists a root $\om\in Z(G)$ such that $K(\t)=K(\om)$.
\end{corollary}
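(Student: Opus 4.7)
The plan is to combine Theorem \ref{tame} with a degree count, after observing that tameness of $F$ collapses $K(\t)\tame$ onto $K(\t)$ itself.

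First, I would fix $\t \in Z(F)$ and pick $\om \in Z(G)$ so that $v(\t-\om) = \mx\{v(\t-\om')\mid \om'\in Z(G)\}$. Theorem \ref{tame} then gives $K(\t)\tame = K(\om)\tame$, so the task reduces to showing $K(\t) = K(\om)\tame$ and that the inclusion $K(\om)\tame \subseteq K(\om)$ is an equality.

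The second step is to verify that $K(\t)\tame = K(\t)$, i.e.\ that $K(\t)\subseteq (K^s)^{G\tame}$. Since $F\in\Pless$ is tame, $e(F)$ is prime to $\chr(K)$ and $k_F/k$ is separable; so $K(\t)/K$ is a finite separable extension of henselian fields whose residue extension is separable and whose ramification index is coprime to the residue characteristic. By classical henselian ramification theory, such an extension is contained in the maximal tame extension $(K^s)^{G\tame}$, because the action of $G\tame$ on $K^s$ fixes exactly those elements whose minimal polynomial generates a tame extension, and every subfield of a tame extension is tame. Once this is established, $K(\t)\tame=K(\t)$, hence $K(\t) = K(\om)\tame \subseteq K(\om)$.

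For the third step, the definition of Okutsu equivalence forces $\deg(F)=\deg(G)$, so
\[
[K(\t):K] = \deg(F) = \deg(G) = [K(\om):K],
\]
and the inclusion $K(\t)\subseteq K(\om)$ becomes an equality. The main obstacle is the justification that a polynomial which is tame in the sense of the paper (separable residue extension, ramification index prime to $\chr(K)$) actually produces a subfield of the maximal tame extension; this is standard ramification theory for henselian valued fields of arbitrary rank, but is the one nontrivial fact not proved in the excerpt, so I would either cite it or give a short argument using the fact that $G\tame$ acts trivially on $k^s$ and with $p'$-order on the value group modulo $\g$.
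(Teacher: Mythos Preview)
Your argument is correct and is precisely the intended one: the paper states the corollary without proof, expecting the reader to combine Theorem \ref{tame} with the observation that tameness of $F$ forces $K(\t)\tame=K(\t)$, and then conclude by the equality of degrees $\deg(F)=\deg(G)$ built into the definition of $\approx$. The only external input you flag---that a defectless generator with separable residue extension and ramification index prime to $\chr(k)$ yields a tame extension in the sense $K(\t)\subset (K^s)^{G\tame}$---is indeed standard henselian ramification theory and is implicitly assumed by the paper when it introduces the notion of tame polynomial just before the theorem.
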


\end{document}